\documentclass[11pt,final]{article}%
\usepackage[textwidth=6.0in,textheight=9.0in,centering]{geometry}
\usepackage{amsxtra,amscd}
\usepackage{amsmath}
\usepackage{amsfonts}
\usepackage{amssymb}
\usepackage{graphicx}
\usepackage[amsmath,hyperref,thmmarks]{ntheorem}
\usepackage{url}
\usepackage{verbatim}
\usepackage{mathptmx}
\usepackage[scaled=0.92]{helvet}
\usepackage[notref,notcite]{showkeys}
\usepackage[all,cmtip]{xy}
\usepackage{diagxy}
\usepackage{natbib}
\usepackage{paralist}
\usepackage{sectsty}%
\setcounter{MaxMatrixCols}{30}
\theoremnumbering{arabic}
\theoremheaderfont{\scshape}
\RequirePackage{latexsym}
\newtheorem{X}{X}[section]
\theoremseparator{.}

\newtheorem{lrconjecture}[X]{Conjecture LR+}
\newtheorem{spconjecture}[X]{Special-Points Conjecture}
\newtheorem{icconjecture}[X]{Integral Comparison Conjecture}

\newtheorem{lemma}[X]{Lemma}
\newtheorem{proposition}[X]{Proposition}
\newtheorem{theorem}[X]{Theorem}

\theorembodyfont{\upshape}
\newtheorem{definition}[X]{Definition}
\newtheorem{example}[X]{Example}

\newtheorem{question}[X]{Question}
\newtheorem{remark}[X]{Remark}
\newtheorem{plain}[X]{}

\theoremstyle{nonumberplain}
\theoremsymbol{\ensuremath{_\Box}}
\RequirePackage{amssymb}
\newtheorem{proof}{Proof}

\qedsymbol{\ensuremath{_\Box}}
\theorembodyfont{\small}
\newtheorem{aside}[X]{Aside}
\theoremstyle{numberplain}
\theorembodyfont{\footnotesize}

\theoremclass{LaTeX}
\numberwithin{equation}{subsection}
\setcounter{secnumdepth}{3}
\setcounter{tocdepth}{2}


\expandafter\let\csname equation*\expandafter\endcsname\csname gather*\endcsname
\expandafter\let\csname endequation*\expandafter\endcsname\csname endgather*\endcsname


\newcommand{\bquote}{\begin{quote}}
\newcommand{\equote}{\end{quote}}
\newcommand\babstract{\begin{abstract}}
\newcommand\eabstract{\end{abstract}}


\setdefaultitem{$\diamond$\hspace{0.07in}}{}{}{}


\sectionfont{\Large\bfseries\centering}
\subsectionfont{\large\bfseries}
\paragraphfont{\itshape\mdseries}
\let\subsubsection=\paragraph

\let\cite=\citealt
\setcounter{secnumdepth}{1}
\setcounter{tocdepth}{2}



\setlength{\parskip}{3pt}

\usepackage{paralist}


\pagestyle{headings}


\newcommand{\ad}{\operatorname{ad}}

\newcommand{\Aut}{\operatorname{Aut}}

\newcommand{\End}{\operatorname{End}}

\newcommand{\Gal}{\operatorname{Gal}}
\newcommand{\GL}{\operatorname{GL}}

\newcommand{\GSp}{\operatorname{GSp}}

\newcommand{\Hom}{\operatorname{Hom}}

\newcommand{\id}{\operatorname{id}}

\newcommand{\im}{\operatorname{Im}}  

\newcommand{\Isom}{\operatorname{Isom}}
\newcommand{\Ker}{\operatorname{Ker}}

\newcommand{\Lie}{\operatorname{Lie}}

\newcommand{\ord}{\operatorname{ord}}

\newcommand{\Res}{\operatorname{Res}}
\newcommand{\Sh}{\operatorname{Sh}}

\newcommand{\SL}{\operatorname{SL}}
\newcommand{\SO}{\operatorname{SO}}
\newcommand{\SP}{\operatorname{Sp}} 

\newcommand{\Spec}{\operatorname{Spec}}

\newcommand{\Sym}{\operatorname{Sym}}

\newcommand{\Tr}{\operatorname{Tr}}

\def\1{{1\mkern-7mu1}}  

\newcommand{\CM}{\mathsf{CM}}
\newcommand{\Hdg}{\mathsf{Hdg}}

\newcommand{\LCM}{\mathsf{LCM}}
\newcommand{\LMot}{\mathsf{LMot}}

\newcommand{\MF}{\mathsf{MF}}
\newcommand{\Mod}{\mathsf{Mod}}
\newcommand{\Mot}{\mathsf{Mot}}

\newcommand{\Rep}{\mathsf{Rep}}

\newcommand{\Vc}{\mathsf{Vec}}


\newcommand\Filt{{\mathrm{Filt}}}


\begin{document}

\title{Points on Shimura varieties over finite fields: the conjecture of Langlands
and Rapoport}
\author{James S. Milne}
\date{November 11, 2009; v4.00}
\maketitle

\begin{abstract}
We state an improved version of the conjecture of Langlands and Rapoport, and
we prove the conjecture for a large class of Shimura varieties. In particular,
we obtain the first proof of the (original) conjecture for Shimura varieties
of PEL-type.

\end{abstract}
\tableofcontents

\clearpage\pagestyle{headings}\thispagestyle{plain}

\section*{Introduction}

\addcontentsline{toc}{section}{Introduction}

\hfill\begin{minipage}{4.5in}
\emph{
Das Problem der Fortsetzbarkeit der Hasse-Weil-Zeta-Funktionen und allgemeiner
der motivischen $L$-Funktionen ist nach wie vor ein zentrales Problem der
Zahlentheorie. Es wird oft in zwei Probleme aufgeteilt \ldots. Es ist erstens zu zeigen, dass jede motivische $L$-Funktion
gleich einer automorphen $L$-Funktion ist, und zweitens, dass jede automorphe
$L$-Funktion fortsetzbar ist. Beide Probleme sind in herzlich wenigen
F\"{a}llen gel\"{o}st und dann nur dank der Bem\"{u}hungen vieler Mathematiker
\"{u}ber lange Zeit. Nach den abelschen Variet\"{a}ten sind in arithmetischer
Hinsicht die Shimuravariet\"{a}ten wohl die zug\"{a}nglichsten, und diese
Arbeit soll ein Beitrag zum ersten Problem f\"{u}r die ihnen zugeordneten
motivschen $L$-Funktionen sein.\footnotemark}\\
\hspace*{2in} Langlands and Rapoport 1987, p113.
\end{minipage}\footnotetext{The problem of analytically continuing Hasse-Weil
zeta functions, and more generally motivic $L$-functions, is one of the
central problems in number theory. It is often divided into two problems\ldots
. The first is to show that every motivic $L$-function is an automorphic
$L$-function, and the second is to show that every automorphic $L$-function
can be analytically continued. Both problems have been solved in mightily few
cases, and then only thanks to the efforts of many mathematicians over a long
period. After abelian varieties, Shimura varieties are, from the arithmetical
point of view, the most approachable, and this work is a contribution to the
first problem for their associated motivic $L$-series.} \bigskip

Shimura varieties arose out of the study of automorphic functions, and are
defined by a reductive group $G$ and additional data $X$. In order to show
that the zeta function of a Shimura variety is an automorphic $L$-function,
one must find a group-theoretic description of the points of the variety with
values in a finite field, and then apply a combinatorial argument involving
the stabilized Arthur-Selberg trace formula and the fundamental lemma.
\citet{langlands1976} gives a general conjectural description of the points.
For the Shimura varieties attached to totally indefinite quaternion algebras I
showed that this conjecture can be proved using only the results of Weil,
Tate, and Honda on abelian varieties over finite fields (\cite{milne1979,
milne1979e}). From this, it follows that the zeta functions of these varieties
are automorphic (see \cite{casselman1979}). Similar results were obtained
using Honda-Tate theory for other quaternionic Shimura varieties by
\citet{reimann1997}, and for simple Shimura varieties of PEL-types A and C by \citet{kottwitz1992}.

Although these results have important consequences, as Langlands himself
pointed out, his original conjecture is inadequate. Typically in the theory of
Shimura varieties, one proves a statement for some small class of Shimura
varieties and extends it to a much larger class through the intermediary of
connected Shimura varieties. Langlands's conjecture is too imprecise for this
approach to work. Moreover, it groups together objects that are only locally
isomorphic whereas one should have a finer statement in which globally
nonisomorphic objects are distinguished. Finally, Langlands states his
conjecture in terms of an embedding of a group into $G(\mathbb{A}_{f}^{p})$,
and this embedding is not sufficiently precisely defined to permit passage to
the combinatorial part of the argument in the general case.

In their fundamental paper, \citet{langlandsR1987} use a \textquotedblleft
Galois-gerbe\textquotedblright, which is conjecturally the groupoid attached
to a fibre functor on the category of motives over a finite field, to give a
precise conjectural description of the points on the reduced variety (ibid.
5.e). This conjecture removes the inadequacies of Langlands's original
conjecture, and is a much deeper statement. In particular, it is not
susceptible to proof, even for Shimura varieties of
PEL-type,\footnote{Although Shimura varieties of PEL-type are very important,
they are very special; see \cite{deligne1971}, p123.} by using only Honda-Tate
theory.\footnote{Except for some very special Shimura varieties --- loosely
speaking, those for which there is no $L$-indistinguishability --- for
example, those defined by quaternion algebras; see \cite{reimann1997},
pp52-59.} Recall that this theory provides a list of the isogeny classes of
abelian varieties over a finite field and determines the isomorphism class of
the endomorphism algebra attached to each class. In Section 6 of their paper,
Langlands and Rapoport proved their conjecture for simple Shimura varieties of
PEL-types A and C assuming

\begin{enumerate}
\item[(c1)] the Hodge conjecture for CM abelian varieties (over $\mathbb{C}$),

\item[(c2)] the Tate conjecture for abelian varieties over a finite field, and

\item[(c3)] Grothendieck's standard conjectures for abelian varieties over
finite fields.
\end{enumerate}

\noindent These statements allowed them to obtain a precise description of the
\emph{category} of abelian motives over a finite field (and therefore of the
category of abelian varieties) as a polarized tannakian category with the
standard fibre functors.

After their paper, two problems remained:

\begin{itemize}
\item remove the three assumptions (c1,c2,c3) from their proof;

\item extend the proof to all Shimura varieties.
\end{itemize}

Concerning the first problem, in \cite{milne1999lm, milne2002}, I proved that
(c1) implies both (c2) and (c3). Thus, the Hodge conjecture for CM abelian
varieties alone suffices for the result of Langlands and Rapoport. More
recently (\cite{milne2000}, \cite{milne2009}) I showed that a much weaker
statement, namely, the rationality conjecture for CM abelian varieties, has
many of the same consequences as the Hodge conjecture for CM abelian
varieties; in particular, it suffices for the above proof of Langlands and Rapoport.

Concerning the second problem, in \cite{milne1994s} I gave a partial heuristic
derivation of the conjecture of Langlands and Rapoport assuming the existence
of a sufficiently good theory of motives in mixed characteristic, and in the
original version of this article (\cite{milne1995}), I examined what was
needed to turn the heuristic argument into a proof. I was led to state two
conjectures, one concerning lifts of special points and one concerning a
comparison of integral cohomologies. \citet{vasiu2003cm} has announced a
results on the first conjecture, and both \citet{vasiu2003}\ and
\citet{kisin2007, kisin2009}\ have announced results on the second conjecture.

This progress has encouraged me to rewrite my 1995 article. Let $\mathbb{F}{}$
be an algebraic closure of the field $\mathbb{F}{}_{p}$ of $p$ elements. After
some preliminaries on tannakian categories in Section 1, I construct in
Section 2 a category $\Mot(\mathbb{F})$ of abelian motives over $\mathbb{F}$
having most of the properties that the category of Grothendieck motives over
$\mathbb{F}{}$ would have if the three conjectures (c1,c2,c3) were known. In
more detail:

\begin{itemize}
\item The category $\Mot(\mathbb{F}{})$ is constructed as a quotient of the
category $\CM(\mathbb{Q}^{\mathrm{al}})$ of CM-motives over $\mathbb{Q}%
{}^{\mathrm{al}}$. According to the theory of quotient tannakian categories in
\cite{milne2007}, to construct such a quotient, we need a fibre functor
$\omega_{0}$ on a certain subcategory of $\CM(\mathbb{Q}^{\mathrm{al}})$. The
proof of the existence of $\omega_{0}$ makes use of, among other things, the
main result of \cite{wintenberger1991} (which gives an explicit description of
the functor sending a CM-motive to its associated filtered Dieudonn\'{e} module).

\item The proof that $\Mot(\mathbb{F}{})$ has the correct fundamental group
uses the main ideas of \cite{milne1999lm} (which proves that (c1) implies (c2)).

\item The proof that the polarization on $\CM(\mathbb{Q}^{\mathrm{al}})$
descends to a polarization on $\Mot(\mathbb{F})$ uses the main ideas of
\cite{milne2002} (which proves that (c1) implies (c3)).
\end{itemize}

In Section 3, I use the category $\Mot(\mathbb{F}{})$ to give what I believe
to be the \textquotedblleft right\textquotedblright\ statement of the
conjecture of Langlands and Rapoport (henceforth called the Conjecture LR+;
see below for a discussion of the various forms of the conjecture). It
attaches to each Shimura $p$-datum $(G,X)$ a set $\mathcal{L}(G,X)$ with
operators, and the conjecture states that there should be a functorial
isomorphism $\mathcal{L}(G,X)\rightarrow\Sh_{p}(G,X)(\mathbb{F})$.

In the fourth section, I explain (following \cite{milne1994s}) how to realize
many Shimura varieties in characteristic zero as moduli varieties of abelian
\emph{motives}. One problem in tackling the Langlands-Rapoport conjecture is
that the level structure at $p$ in characteristic zero is stated in terms of
\'{e}tale cohomology, whereas the level structure at $p$ over the finite field
is stated in terms of the crystalline cohomology. In order to pass from one to
the other, we need the integral comparison conjecture (first stated in
\cite{milne1995}) which says that a Hodge class on an abelian variety with
good reduction is integral for the de Rham cohomology if it is integral for
the $p$-adic \'{e}tale cohomology. Since proofs of enough of this conjecture
for our purposes have been announced by both \citet{vasiu2003} and
\citet{kisin2007, kisin2009}, I shall assume it for the remainder of this introduction.

Another obstacle is that the conjecture of Langlands and Rapoport implicitly
implies that points of the Shimura variety with coordinates in a finite field
lifts to special points in characteristic zero (up to isogeny). I call this
statement the special-points conjecture. For simple Shimura varieties of PEL
type, it was proved by \citet{zink1983}, and more general results been
announced by \citet{vasiu2003cm}.

In the final two sections of the paper, I prove that, for any Shimura
$p$-datum of Hodge type (that is, embeddable in a Siegel $p$-datum), there is
a canonical equivariant map $\mathcal{L}(G,X)\rightarrow\Sh_{p}(\mathbb{F}{}%
)$. For an appropriate integral model the map is injective, and it is
surjective if (and only if) the special-points conjecture is true. In
particular, the Conjecture LR+ (a fortiori, the original conjecture of
Langlands and Rapoport) is proved for Shimura varieties of PEL-type (by Zink's
result). I also discuss how to extend the proofs of the Conjecture LR+ to
other Shimura varieties, including many that are not moduli varieties, not
even conjecturally; cf. Pfau 1993 1996a,b. Moreover, I discuss what is needed
to extend the proof to all Shimura varieties of abelian type, and perhaps to
all Shimura varieties.

\subsection{The various forms of the conjecture of Langlands and Rapoport
(good reduction case)}

In an attempt to reduce the confusion surrounding the statement of the
conjecture, I list its main forms. To avoid overloading the exposition, I
state the conjectures only for Shimura varieties whose weight is defined over
$\mathbb{Q}{}$.

\begin{description}
\item[LRo] The original statement is (5.e), p169, of \cite{langlandsR1987}. In
the first three sections of the paper, the authors define a groupoid (die
pseudomotivische Galoisgruppe), and use it to attach to a Shimura $p$-datum a
set $\mathcal{L}(G,X)$ with a Frobenius operator and an action of
$G(\mathbb{A}{}_{f})$. When $G^{\mathrm{der}}$ is simply connected, they
conjecture that this set with operators is isomorphic to the set
$\Sh_{p}(\mathbb{F}{})$ defined by some integral model of the Shimura variety
$\Sh_{p}(G,X)$.\footnote{Langlands and Rapoport inadvertently omitted the
condition that the map be $G(\mathbb{A}{}_{f})$-equivariant --- Langlands has
assured me that this should be considered part of the conjecture. Langlands
and Rapoport also state a conjecture when the Shimura variety has mild bad
reduction, but we are not concerned with that.}

\item[LRm] In \cite{milne1992} I made some improvements to the original
conjecture (ibid. 4.8).

\begin{itemize}
\item Langlands and Rapoport (1987, \S 7) show that their statement of the
conjecture can not be true when $G^{\mathrm{der}}$ is not simply connected. I
modified the statement of the conjecture so that it applies to all Shimura
varieties (when the derived group is simply connected, the modified statement
becomes the original statement because of \cite{langlandsR1987}, Satz 5.3, p173).

\item I defined the notion of a canonical integral model for the Shimura
variety, which is uniquely characterized by having a certain extension
property, and added the requirement that the conjecture hold for that
particular integral model.

\item I added the condition that the isomorphism $\mathcal{L}(G,X)\rightarrow
\Sh_{p}(G,X)(\mathbb{F}{})$ commutes with the actions of $Z(\mathbb{Q}{}_{p})$
where $Z$ is the centre of $G$. With the addition of this condition, I showed
that the conjecture for Shimura varieties with simply connected derived group
implies the conjecture for all Shimura varieties.
\end{itemize}

\item[LRp] \citet{pfau1993,pfau1996a,pfau1996b} pointed out that neither LRo
nor LRm is sufficiently strong to pass from Shimura varieties of Hodge type to
Shimura varieties of abelian type. Specifically, if one assumes that
Conjecture LRo (or LRm) holds for all Shimura varieties of Hodge type, then it
is not possible to deduce that it holds for all Shimura varieties of abelian
type. For that, one needs a \textquotedblleft refined\textquotedblright%
\ conjecture in which the isomorphism $\mathcal{L}(G,X)\rightarrow
\Sh_{p}(G,X)(\mathbb{F}{})$ is required to respect the maps to the sets of
connected components.

\item[LR+] As Deligne pointed out to me, Langlands and Rapoport define only
the isomorphism class of their groupoid. In fact, the groupoid is not
well-defined, even conjecturally (at a minimum it requires the choice of a
fibre functor). In \S 3, I restate the conjecture in terms of the category
$\Mot(\mathbb{F}{})$ defined in \S 2. At present, the construction of this
category also requires a choice, but the possibly-provable rationality
conjecture for CM-abelian varieties (weaker than the Hodge conjecture for CM
abelian varieties) implies that there is a unique preferred choice; moreover,
the choice doesn't affect the construction of $\mathcal{L}(G,X)$. Now that
both objects are well defined, it is possible to require that the isomorphism
$\mathcal{L}(G,X)\rightarrow\Sh_{p}(G,X)(\mathbb{F}{})$ be functorial in
$(G,X)$ --- this is the Conjecture LR+. With the choice of a fibre functor for
$\Mot(\mathbb{F}{})$, Conjecture LR+ implies Conjecture LRp, and so it is
strictly stronger than both LRo and LRm. When one assumes LR+ for all Shimura
varieties of Hodge type, then it is possible to deduce it for all Shimura
varieties of abelian type.
\end{description}

\noindent The conjecture of Langlands and Rapoport is of interest to everyone
working on Shimura varieties. For those interested only in the zeta functions
of Shimura varieties, all that is needed is the integral formula,
\begin{equation}
T(j,f)=\left\vert \Ker^{1}(\mathbb{Q}{},G)\right\vert \sum_{(\gamma_{0}%
;\gamma,\delta)}c(\gamma_{0};\gamma,\delta)\cdot O_{\gamma}(f^{p})\cdot
TO_{\delta}(\phi_{r})\cdot\Tr\xi(\gamma_{0}) \label{eq5}%
\end{equation}
first conjectured in a preliminary form by Langlands, and then by
\nocite{kottwitz1990}Kottwitz (1990, 3.1)\footnote{But only when
$G^{\mathrm{der}}$ is simply connected. There may be some interest in writing
the formula also in the case that the derived group is not simply connected;
that is, in extending \S 5--\S 7 of \cite{milne1992} to that case.}. It is
proved in \cite{milne1992} that Conjecture LRm implies this formula (the
converse, of course, is false).

\subsection{Some history}

The original conjecture of Langlands and Rapoport (LRo) predicted that two
objects, not well-defined, are isomorphic. It is not possible to prove such a
statement without first defining the objects. This led me (in \cite{milne1992}%
) to introduce the notion of a canonical integral model, which ensured that
the set-with-operators $\Sh_{p}(G,X)(\mathbb{F}{})$ is well-defined.

Although Honda-Tate theory suffices to prove Langlands's original conjecture
for some PEL Shimura varieties, it soon became clear (to me and others) that
it was insufficient to prove the Conjecture LRo. Hence there was a need to
obtain a description of the category of abelian varieties over $\mathbb{F}{}$,
or, more generally, of abelian motives, and not just the set of its
isomorphism classes. In 1995, at the time I proved the results in
\cite{milne1999lm}, I thought these results could be used

\begin{quote}
to construct a canonical category of \textquotedblleft
motives\textquotedblright\ over $\mathbb{F}{}$ that has the \textquotedblleft
correct\textquotedblright\ fundamental group, equals the true category of
motives if the Tate conjecture holds for abelian varieties over $\mathbb{F}{}%
$, and canonically contains the category of abelian varieties up to isogeny as
a polarized subcategory (ibid, p47).
\end{quote}

\noindent I applied this statement to investigate the conjecture of Langlands
and Rapoport with the goal of determining what more was needed to prove the
conjecture. I determined (I believe correctly) that the two key technical
results needed are the special points conjecture and integral comparison
conjecture (see below). I wrote this work up as \cite{milne1995} for my own
personal use, but I gave the manuscript to a few people because I wanted to
encourage the experts to work on the two conjectures.

Alas, when I tried to write out the proof of the quoted statement, I found a
gap in my argument (the rationality conjecture!). However my later work has
enabled me to construct a category of motives with the required properties
(but not to prove that it is canonical or that it contains the category of
abelian varieties). This, together with the work of Kisin and Vasiu on the two
conjectures, has encouraged me to return to the topic.

\begin{aside}
A problem one has in working on the conjecture of Langlands and Rapoport is
the misperceptions that exist about the conjecture in the mathematical
community. One factor contributing to this is that the original paper is in
German, and hence inaccessible to most mathematicians, who may also be
deterred by its length (108 pages). Another has been the misstatements in the
literature, most egregiously in Clozel's Bourbaki talk (\cite{clozel1993}),
where he writes (Introduction):\bquote\ldots En collaboration avec Rapoport,
[Langlands] formula ensuite une conjecture pr\'{e}cise (LR1987), qui \'{e}tait
d\'{e}montr\'{e}e modulo une partie des \textquotedblleft conjectures
standard\textquotedblright\ de g\'{e}ometrie alg\'{e}brique.\newline L'objet
de cet expos\'{e} est le travail de Kottwitz (K1992) qui d\'{e}montre
inconditionnellement, pour les vari\'{e}t\'{e}s de Shimura qui d\'{e}crivent
des probl\`{e}mes de modules de vari\'{e}t\'{e}s ab\'{e}liennes munies de
quelque structures \ldots, une reformulation de la conjecture de
Langlands-Rapoport.\smallskip\newline(\ldots In collaboration with Rapoport,
[Langlands] next formulated a precise conjecture (LR1987), which was proved
modulo part of the \textquotedblleft standard conjectures\textquotedblright%
\ in algebraic geometry.\newline The object of this exposition is the work of
Kottwitz (K1992) which proves unconditionally, for those Shimura varieties
describing moduli problems for abelian varieties endowed with some
structures\ldots, a reformulation of the conjecture of
Langlands-Rapoport.)\equote If you believe this, as many mathematicians seem
to judging by their writings, then you will think that the conjecture of
Langlands and Rapoport was proved for (at least) all Shimura varieties of PEL
type by Kottwitz in 1992, and that the general case was proved by Langlands
and Rapoport in 1987 assuming only a part of the standard conjectures. In
fact, \citet{kottwitz1992} proves only the integral formula (\ref{eq5}) (see
p442 of his paper) and only for simple Shimura varieties of PEL types A and C,
while Langlands and Rapoport (1987) prove their conjecture only for simple
Shimura varieties of PEL types A and C, and only assuming the Hodge
conjecture, the Tate conjecture, and the standard conjectures.\footnote{In
diesem Abschnitt wollen wir zeigen, dass f\"{u}r gewisse Shimuravariet\"{a}ten
die Vermutung im \S 5 eine Folge der Identifizierung der pseudomotivischen
Galoisgruppe und der motivischen Galoisgruppe ist, die im \S 4 unter Annahme
der Standardvermutungen und der Tate-Vermutung sowie der Hodge-Vermutung
vorgenommen wurde (ibid. p198).
\par
(In this section we shall show that, for certain Shimura varieties, the
conjecture in \S 5 is a consequence of the identification of the pseudomotivic
Galois group with the motivic Galois group, which was proved in \S 4 under the
assumption of the standard conjectures and the Tate conjecture as well as the
Hodge conjecture.)}
\end{aside}

\subsection*{Notations and conventions}

Throughout $\mathbb{Q}^{\mathrm{al}}$ is the algebraic closure of $\mathbb{Q}$
in $\mathbb{C}$, and $\mathbb{Q}^{\mathrm{cm}}$ is the union of the
CM-subfields of $\mathbb{Q}^{\mathrm{al}}$. For a Galois extension $K/k$, we
let $\Gamma_{K/k}=\Gal(K/k)$. When $K$ is an algebraic closure of $k$, we omit
it from the notation. Complex conjugation on $\mathbb{C}$ and its subfields is
denoted by $\iota$ or $z\mapsto\bar{z}$.

\noindent\begin{minipage}{2.7in}
\hspace{0.2in}We fix a prime $v$ of $\mathbb{Q}^{\mathrm{al}}$ dividing $p$, and we write
$v_{K}$ (or just $v$) for the prime it induces on a subfield $K$ of
$\mathbb{Q}^{\mathrm{al}}$. The completion $(\mathbb{Q}^{\mathrm{al}}%
)_{v}$ of $\mathbb{Q}^{\mathrm{al}}$ at $v$ is algebraically closed, and we
let $\mathbb{Q}_{p}^{\mathrm{al}}$ denote the algebraic closure of
$\mathbb{Q}_{p}$ in $(\mathbb{Q}^{\mathrm{al}})_{v}$. We let $\mathbb{Q}%
_{p}^{\mathrm{un}}$ denote the largest unramified extension of $\mathbb{Q}%
_{p}$ in $\mathbb{Q}_{p}^{\mathrm{al}}$. Its residue field, which we
denote $\mathbb{F}$, is an algebraic closure of $\mathbb{F}_{p}$. We let
$B(\mathbb{F})$ denote the closure of $\mathbb{Q}_{p}^{\mathrm{un}}$ in
($\mathbb{Q}^{\mathrm{al}})_{v}$, and we let $W(\mathbb{F})$ denote its
ring of integers (equal to the ring of Witt vectors with coefficients in
$\mathbb{F}$). The Frobenius map $x\mapsto x^p$ on $\mathbb{F}$
and its lift to $W(\mathbb{F})$ are both denoted by $\sigma$.\end{minipage}\hfill
\begin{minipage}{3in}\[\bfig 
\node v(-450,800)[v]
\node p(-450,0)[p]
\node 0(-300,1600)[\mathbb{C}] 
\node a(-300,0)[\mathbb{Q}]
\node b(-300,800)[\mathbb{Q}^{\mathrm{al}}] 
\node c(300,0)[\mathbb{Q}_{p}]
\node d(50,400)[\mathbb{Q}_{p}^{\mathrm{un}}] 
\node e(50,800)[\mathbb{Q}_{p}^{\mathrm{al}}] 
\node f(300,1200)[\mathbb{(Q}^{\mathrm{al}})_{v}]
\node g(550,400)[B(\mathbb{F})] 
\node h(950,0)[\phantom{_p}\mathbb{Z}_{p}]
\node i(950,400)[W(\mathbb{F})] 
\node j(1350,0)[{\mathbb{F}_{\rlap{$\scriptstyle{p}$}}}]
\node k(1350,400)[\mathbb{F}] 
\node m(750,400)[\supset]
\arrow/{-}/[b`e;{}] 
\arrow/{-}/[b`0;{}]
\arrow/{-}/[a`b;{}] 
\arrow[a`c;{\text{complete}}] 
\arrow[b`f;{\text{complete}}] 
\arrow/{-}/[c`d;{}]
\arrow/{-}/[d`e;{}] 
\arrow/{-}/[e`f;{}] 
\arrow/{-}/[c`g;{}] 
\arrow/{-}/[f`g;{}] 
\arrow[d`g;{\text{complete}}]
\arrow/{-}/[h`i;{}] 
\arrow/{->>}/[h`j;{}] 
\arrow/{->>}/[i`k;{}] 
\arrow/{-}/[j`k;{}] 
\arrow/{}/[g`m;{}]
\arrow/{}/[v`b;{}]
\arrow/{}/[p`a;{}] 
\efig\]
\end{minipage}

A reductive group is a smooth affine group scheme whose geometric fibres are
connected reductive algebraic groups. For such a group $G$ over a field,
$G^{\mathrm{der}}$ denotes the derived group of $G$, $Z(G)$ the centre of $G$,
$G^{\mathrm{ad}}\overset{\text{{\tiny def}}}{=}G/Z(G)$ the adjoint group of
$G$, and $G^{\mathrm{ab}}\overset{\text{{\tiny def}}}{=}G/G^{\mathrm{der}}$
the largest commutative quotient of $G$.

For a (pro)torus $T$ over a field $k$, $X^{\ast}(T)$ and $X_{\ast}(T)$ denote
the character group of $T$ and its cocharacter group (characters and
cocharacters defined over some algebraic closure of $k$). The pairing%
\[
\langle\,,\,\rangle\colon X^{\ast}(T)\times X_{\ast}(T)\rightarrow\mathbb{Z}%
\]
is defined by the formula%
\[
(\chi\circ\mu)(t)=t^{\langle\chi,\mu\rangle}\text{, }t\in T(k^{\mathrm{al}}).
\]

For an affine group scheme $G$ over a ring $R$ and an $R$-algebra $S$,
$\Rep_{S}(G)$ denotes the category of representations of $G$ on \emph{finitely
generated projective} $S$-modules (equivalently, flat $S$-modules of finite
presentation). A representation will be denoted $\xi\colon G\rightarrow
\GL(V(\xi))$ or $\xi\colon G\rightarrow\GL(\Lambda(\xi))$ depending on whether
$R$ is a field or not. Thus, $V\colon\xi\rightsquigarrow V(\xi)$ and
$\Lambda\colon\xi\rightsquigarrow\Lambda(\xi)$ are the forgetful fibre
functors $\Rep_{S}(G)\rightarrow\Mod_{S}$. A tensor functor $\Rep_{R}%
(G)\rightarrow\Mod_{S}$ is said to be \emph{exact} if (a) it maps sequences in
$\Rep_{R}(G)$ that are exact as sequences of $R$-modules to exact sequences in
$\Mod_{S}$ and (b) every homomorphism in $\Rep_{R}(G)$ whose image in
$\Mod_{S}$ is an isomorphism is itself an isomorphism.

For a finite extension of fields $k\supset k_{0}$ and an algebraic group $G$
over $k$, $\Res_{k/k_{0}}G$ and $(G)_{k/k_{0}}$ both denote the algebraic
group over $k_{0}$ obtained from $G$ by restriction of scalars. For an
infinite extension $k/k_{0}$, we let $(\mathbb{G}_{m})_{k/k_{0}}$ denote the
protorus $\varprojlim(\mathbb{G}_{m})_{K/k_{0}}$ where $K$ runs over the
finite extensions of $k_{0}$ contained in $k$.

\label{pdatum}By a \emph{Shimura }$p$\emph{-datum}, we mean a reductive
group\footnote{To give a reductive group over $\mathbb{Z}_{(p)}\overset
{\textup{{\tiny def}}}{=}\mathbb{Q}\cap\mathbb{Z}_{p}$ amounts to giving a
reductive group $G_{0}$ over $\mathbb{Q}$, a reductive group $G_{p}$ over
$\mathbb{Z}_{p}$, and an isomorphism $(G_{0})_{\mathbb{Q}_{p}}\rightarrow
(G_{p})_{\mathbb{Q}_{p}}$ (apply \cite{boschLR1990}, 6.2, Proposition D.4,
p147).} $G$ over $\mathbb{Z}_{(p)}$ together with a $G(\mathbb{R})$-conjugacy
class $X$ of homomorphisms $\mathbb{S}\rightarrow G_{\mathbb{R}}$ such that
$(G_{\mathbb{Q}},X)$ satisfies the conditions (SV1), (SV2), and (SV3) of
\cite{milne2005} (equal to the conditions (2.1.1.1), (2.1.1.2), and (2.1.1.3)
of \cite{deligne1979}). Except for the last two subsections of \S 6, we shall
assume that $(G_{\mathbb{Q}},X)$ satisfies (SV4) (the weight is defined over
$\mathbb{Q}{}$) and (SV6) (the connected centre splits over a CM-field).

In general, an object defined over $\mathbb{Q}_{l}$ is denoted by $?_{l}$,
whereas an object over $\mathbb{Q}_{l}$ that comes from an object $?$ over
$\mathbb{Q}$ by extension of scalars is denoted by $?(l)$. We sometimes
abbreviate $S\otimes_{R}?$ to $?_{S}$.

We sometimes use $[x]$ to denote the equivalence class of an element $x$.

We use $\approx$ to denote an isomorphism, and $\simeq$ to denote a canonical
(or given) isomorphism.

We use $l$ to denote a prime of $\mathbb{Q}$, i.e., $l\in\{2,3,\ldots
,p,\ldots\infty\}$, and $\ell$ to denote a prime $\neq p,\infty$.

We let $\mathbb{A}_{f}$ denote the ring of finite ad\`{e}les $(\varprojlim
_{m}\mathbb{Z}/m\mathbb{Z})\otimes_{\mathbb{Z}}\mathbb{Q}$ and $\mathbb{A}%
_{f}^{p}$ the ring of finite ad\`{e}les with the $p$-component omitted.

A diagram of functors and categories is said to commute if it commutes up to a
canonical natural isomorphism.

\section{Tannakian preliminaries}

By a tensor category over a ring $R$ we mean an additive symmetric monoidal
category such that $R=\End(\1)$ for any identity object $\1$
(cf.\ \cite{deligneM1982}). Tensor functors are required to be linear for the
relevant rings. A tensor category over a field is tannakian if it is abelian,
rigid, and admits an $R$-valued fibre functor for some nonzero $k$-algebra
$R$. When the fundamental group of a tannakian category is commutative, we
identify it with an affine group scheme in the usual sense
(cf.\ \cite{deligne1989}, \S 6). For a subgroup $H$ of the fundamental group
of a tannakian category $\mathsf{C}$, we let $\mathsf{C}^{H}$ denote the full
subcategory of objects fixed by $H$ (that is, on which the action of $H$ is trivial).

\subsection{Fibre functors}

Recall that a field $k$ is said to have dimension $\leq1$ if the Brauer group
of every field algebraic over$\ $it is zero (\cite{serre1964}, II \S 3). For
example, a finite field has dimension $\leq1$ and the field $B(\mathbb{F})$
has dimension $\leq1$ (ibid.). For a connected algebraic group $G$ over a
perfect field of dimension $\leq1$, $H^{1}(k,G)=0$ (\cite{steinberg1965}, 1.9).

\begin{proposition}
\label{a1}Let $G$ be a reductive group over a henselian discrete valuation
ring $R$ whose residue field has dimension $\leq1$. Every exact tensor functor
$\omega\colon\Rep_{R}(G)\rightarrow\Mod_{R}$ is isomorphic to the forgetful
functor $\omega^{G}$; moreover, $\Hom^{\otimes}(\omega^{G},\omega)$ is a
principal homogenous space for $G(R)$.
\end{proposition}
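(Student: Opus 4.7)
The plan is a standard Tannakian triangulation: view $\underline{\Hom}{}^{\otimes}(\omega^G,\omega)$ as a right $G$-torsor on $\Spec R$, prove that it is smooth, trivialize it on the special fibre using Steinberg's theorem, and then lift to an $R$-point using the henselian hypothesis. The principal-homogeneous-space statement is then automatic.

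First I would invoke Tannakian reconstruction for the reductive group scheme $G/R$ to represent the functor
\[
S \rightsquigarrow \Hom^{\otimes}(\omega^G \otimes_R S,\ \omega \otimes_R S)
\]
on $R$-algebras by an affine $R$-scheme $P$. Because a tensor morphism between fibre functors is automatically an isomorphism (rigidity), $P$ is in fact $\underline{\Isom}{}^{\otimes}(\omega^G,\omega)$, and the composition with automorphisms of $\omega^G$ makes it a right torsor under $\underline{\Aut}{}^{\otimes}(\omega^G) \simeq G$. This is the step whose execution requires most care: over a field it is classical (Saavedra, Deligne--Milne), but over a base ring one must check that exactness of $\omega$ together with flatness and finite presentation of $G$ yield a representing scheme that is flat of finite presentation and étale-locally isomorphic to $G$ --- I would cite the Tannakian-duality-over-rings literature here. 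Once $P$ is known to be a $G$-torsor, smoothness of $G$ over $R$ combined with fppf-local triviality of $P$ forces $P$ to be smooth over $R$.

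Second, let $k$ be the residue field of $R$. The special fibre $P_k$ is a $G_k$-torsor and is classified by an element of $H^1(k,G_k)$. Since $G_k$ is connected reductive and $k$ has dimension $\leq 1$, this $H^1$ vanishes by Steinberg's theorem, exactly as recalled immediately before the proposition, so $P(k) \neq \emptyset$. Because $P$ is smooth over the henselian local ring $R$, any $k$-point of $P$ lifts to an $R$-point, producing a tensor isomorphism $\omega^G \xrightarrow{\sim} \omega$. Thus $\Hom^{\otimes}(\omega^G,\omega) = P(R)$ is non-empty, and being the $R$-points of a $G$-torsor, it is a principal homogeneous space under $G(R) = \Aut^{\otimes}(\omega^G)(R)$.

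The main obstacle is the first step: ensuring that the Tannakian reconstruction genuinely produces a $G$-torsor over the base ring $R$ (not merely a formal functor or a torsor over a field). The rest of the argument --- Steinberg plus henselian lifting --- is purely a matter of invoking the two hypotheses on $R$.
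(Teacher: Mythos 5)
Your proposal is correct and follows essentially the same route as the paper: identify $\Hom^{\otimes}(\omega^{G},\omega)$ as a $G$-torsor, use smoothness of $G$ together with the henselian hypothesis to reduce triviality to the closed fibre, and conclude by Steinberg's vanishing of $H^{1}(k,G_{k})$. The only difference is packaging: you trivialize by lifting a $k$-point of the smooth torsor directly via Hensel's lemma, whereas the paper cites the cohomological comparison $H^{1}(R,G)=H^{1}(k,G_{k})$ (together with the fpqc-to-fppf interpretation of the torsor class), which is proved by exactly the lifting argument you describe.
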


\begin{proof}
The tensor isomorphisms from $\omega$ to the forgetful functor form a
$G$-torsor (e.g., \cite{milne1995}, 1.1), which determines an element of
$H^{1}(R,G)$. Because $G$ is of finite type, this fpqc cohomology group can be
interpreted as an fppf group (\cite{saavedra1972}, III 3.1.1.1), and because
$G$ is smooth
\[
H^{1}(R,G)=H^{1}(k,G_{k})
\]
where $k$ is the residue field of $R$ (e.g., \cite{milne1980}, III 3.1). But
$G_{k}$ is connected, and so $H^{1}(k,G_{k})=0$ by Steinberg's theorem, from
which the statement follows.
\end{proof}

Let $G$ be an affine group scheme, flat and of finite type over a ring $R$,
and let $\xi$ be a representation of $G$ on a finitely generated projective
$R$-module $\Lambda(\xi)$. By a tensor on $\xi$ we mean an element of
$\Lambda(\xi)^{\otimes r}\otimes\Lambda(\xi)^{\vee}{}^{\otimes s}$ for some
$r,s$ that is fixed under the action of $G$. Note that a tensor $t$ can
regarded as a homomorphism $R\rightarrow\Lambda(\xi)^{\otimes r}\otimes
\Lambda(\xi)^{\vee}{}^{\otimes s}$ of $G$-modules, and so it defines a tensor
$\omega(t)$ on $\omega(\Lambda(\xi))$ for any tensor functor $\omega$ on
$\Rep_{R}(G)$. A representation $\xi_{0}$ of $G$ together with a family
$(t_{i})_{i\in I}$ of tensors is said to be \emph{defining\/} if, for all flat
$R$-algebras $S$,
\[
G(S)=\{g\in\Aut(S\otimes_{R}\Lambda(\xi_{0}))\mid gt_{i}=t_{i}%
,\text{\textrm{\ for all }}i\in I\}.
\]
In particular, this implies that $\xi_{0}$ is faithful. For conditions under
which a defining representation and tensors exist, see \cite{saavedra1972},
p151. For example, defining tensors exist if $R$ is a discrete valuation ring
and $G$ is a closed flat subgroup of $\GL(\Lambda)$ whose generic fibre is
reductive (by a standard argument, cf. \cite{deligne1982}, 3.1).

\begin{proposition}
\label{a2} Let $G$ be an affine group scheme, flat over a henselian discrete
valuation ring $R$ whose residue field has dimension $\leq1$, and assume that
$(\xi_{0},(t_{i})_{i\in I})$ is defining for $G$.

\begin{enumerate}
\item Consider a finitely generated projective $R$-module $\Lambda$ and a
family $(s_{i})_{i\in I}$ of tensors for $\Lambda$. There exists an exact
tensor functor $\omega\colon\Rep_{R}(G)\rightarrow\Mod_{R}$ such that
\begin{equation}
(\omega(\xi_{0}),(\omega(t_{i}))_{i\in I})=(\Lambda,(s_{i})_{i\in I})
\label{eq1}%
\end{equation}
if and only if there exists an isomorphism $\Lambda(\xi_{0})\rightarrow
\Lambda$ of $R$-modules mapping each $t_{i}$ to $s_{i}$.

\item For any exact tensor functor $\omega\colon\Rep_{R}(G)\rightarrow
\Mod_{R}$, the map $\alpha\mapsto\alpha(\xi_{0})$ identifies $\Hom^{\otimes
}(\omega^{G},\omega)$ with the set of isomorphisms $\omega^{G}(\xi
_{0})\rightarrow\omega(\xi_{0})$ mapping each $\omega(t_{i})$ to $t_{i}$. Here
$\omega^{G}$ denotes the forgetful functor on $\Rep_{R}(G)$.
\end{enumerate}
\end{proposition}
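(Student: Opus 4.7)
The plan is to deduce both parts from the defining property of $(\xi_0,(t_i))$ together with Proposition \ref{a1}. The defining property says that for every flat $R$-algebra $S$, $G(S)$ coincides with the stabilizer in $\Aut_S(\Lambda(\xi_0)\otimes_R S)$ of the $t_i$, which means that the sheaf $\underline{\Isom}_{(t)}$ on flat $R$-algebras sending $S$ to $S$-module isomorphisms $\Lambda(\xi_0)\otimes S \to \omega(\xi_0)\otimes S$ carrying each $t_i$ to $\omega(t_i)$ is a right $G$-torsor (once it is nonempty). Proposition \ref{a1} gives that $\Hom^\otimes(\omega^G,\omega)$ is a $G(R)$-torsor, in particular nonempty.

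For part (b), I would show that evaluation $\alpha \mapsto \alpha(\xi_0)$ carries $\Hom^\otimes(\omega^G,\omega)$ into $\underline{\Isom}_{(t)}(R)$: a tensor isomorphism carries the $G$-invariant morphism $R \to \Lambda(\xi_0)^{\otimes r} \otimes \Lambda(\xi_0)^{\vee \otimes s}$ picking out $t_i$ to the $G$-invariant morphism picking out $\omega(t_i)$. The map is $G(R)$-equivariant for the natural actions. Injectivity follows because $\xi_0$ tensor-generates $\Rep_R(G)$ (every object is a subquotient of sums of tensor constructions on $\xi_0$), so a tensor isomorphism is determined by its restriction to $\xi_0$. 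For surjectivity, take any $f \in \underline{\Isom}_{(t)}(R)$ and any $\alpha_0 \in \Hom^\otimes(\omega^G,\omega)$ (which exists by Proposition \ref{a1}); then $\alpha_0(\xi_0)^{-1} \circ f$ is an $R$-automorphism of $\Lambda(\xi_0)$ fixing each $t_i$, hence an element of $G(R)$ by the defining property, and translating $\alpha_0$ by this element produces $\alpha$ with $\alpha(\xi_0) = f$.

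For part (a), the ``only if'' direction is immediate: by Proposition \ref{a1} any such $\omega$ is tensor-isomorphic to $\omega^G$, and evaluation of a tensor isomorphism at $\xi_0$ provides the required $R$-module isomorphism $\Lambda(\xi_0) \to \Lambda$ sending $t_i$ to $\omega(t_i) = s_i$. For ``if'', given $f \colon \Lambda(\xi_0) \to \Lambda$ with $f(t_i) = s_i$, form the right $G$-torsor $P = \underline{\Isom}_{(t)}(\Lambda(\xi_0), \Lambda)$, which is trivialized by the point $f \in P(R)$, and set $\omega(\xi) = P \times^G \Lambda(\xi)$. The trivialization identifies $\omega(\xi)$ with $\Lambda(\xi)$ as an $R$-module, so $\omega$ takes values in finitely generated projective $R$-modules and is exact; the evaluation $[p,v] \mapsto p(v)$ realizes $\omega(\xi_0) \cong \Lambda$ and carries each $t_i$ to $f(t_i) = s_i$. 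The tensor structure on $\omega$ is inherited from that on $\omega^G$ via the twisting.

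The main technical burden is verifying that $\xi_0$ tensor-generates $\Rep_R(G)$ over the discrete valuation ring $R$, since the injectivity argument in (b) and the exactness and tensor-functoriality of the twisted $\omega$ in (a) both depend on it. Over a field this is the standard tannakian fact that every representation of an affine group scheme is a subquotient of tensor constructions on a faithful representation; over a DVR the same argument should go through using flatness of $G$ and the hypothesis that each $\Lambda(\xi)$ is finitely generated projective. Granting this, everything else follows cleanly from the torsor formalism and Proposition \ref{a1}.
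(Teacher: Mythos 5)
Your treatment of (a) and of surjectivity in (b) is essentially the paper's own argument: the ``only if'' direction comes from Proposition \ref{a1}, the ``if'' direction by twisting the forgetful functor by the torsor $P$ of tensor-preserving isomorphisms (trivial because $f\in P(R)$), and surjectivity in (b) by translating a chosen $\alpha_0\in\Hom^{\otimes}(\omega^G,\omega)$ by the element of $G(R)$ that the defining property of $(\xi_0,(t_i))$ produces. The one place you diverge is injectivity in (b), which you rest on the claim that $\xi_0$ tensor-generates $\Rep_R(G)$; you rightly flag this as the main unproved burden, but it is both delicate (the field-case subquotient argument does not transfer verbatim to a discrete valuation ring) and unnecessary. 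The paper's proof of (b) is a one-liner: $\Hom^{\otimes}(\omega^G,\omega)$ is a principal homogeneous space under $G(R)$ by Proposition \ref{a1}, and the set of isomorphisms $\omega^G(\xi_0)\to\omega(\xi_0)$ matching the tensors is also one, since it is nonempty (it receives the nonempty source under evaluation) and the defining property makes $G(R)$ act on it simply transitively; a $G(R)$-equivariant map between principal homogeneous spaces is automatically bijective, so injectivity comes for free --- concretely, if $\alpha'=\alpha\cdot g$ with $\alpha'(\xi_0)=\alpha(\xi_0)$, then $g$ acts trivially on $\Lambda(\xi_0)$ and hence $g=1$ because $\xi_0$ is faithful. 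For the same reason, your worry that exactness and the tensor structure of the twisted functor in (a) depend on tensor-generation is misplaced: since $P$ is trivial, the twist is isomorphic as a tensor functor to $\omega^G$, so it is exact with finitely generated projective values with no further input. With the tensor-generation crutch removed, your proposal coincides with the paper's proof.
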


\begin{proof}
(a) If $\omega$ exists, then according to Proposition \ref{a1} there exists an
isomorphism $\omega^{G}\rightarrow\omega$, and so the condition is necessary.
For the converse, let $S$ be an $R$-algebra, and define $P(S)$ to be the set
of isomorphisms $S\otimes_{R}\Lambda(\xi_{0})\rightarrow S\otimes_{R}\Lambda$
mapping each $t_{i}$ to $s_{i}$. Then $S\rightsquigarrow P(S)$ is a $G$-torsor
which, by assumption, is trivial. The twist of $\omega^{G}$ by $P$ is an
$R$-valued fibre functor that satisfies (\ref{eq1}).

(b) Both sets are principal homogeneous space for $G(R)$, and so any
$G(R)$-equivariant map from one to the other is a bijection.
\end{proof}

\subsection{Objects with $G$-structure}

\begin{definition}
\label{a3}Let $R$ be a $Q$-algebra, and let $\mathsf{C}$ be an $R$-linear
rigid abelian tensor category. Let $G$ be a reductive group over $R$. An
\emph{object}\emph{ in }$\mathsf{C}$ \emph{with a }$G$\emph{-structure}%
\footnote{The concept was used in the original version of this article
(\cite{milne1995}). For this version, I have borrowed the name from
\cite{simpson1992}, p86, and \cite{rapoportR1996}, 3.3.} (or, more briefly, a
$G$\emph{-object} in $\mathsf{C}$) is an exact faithful tensor functor
$M\colon\Rep(G)\rightarrow\mathsf{C}$. We say that two $G$-objects are
\emph{equivalent} if they are equivalent as tensor functors.
\end{definition}

Let $(\xi_{0},(t_{i})_{i\in I})$ be defining for $G$. A $G$-object $M$ in
$\mathsf{C}$ defines an object $M(\xi_{0})$ together with a family of tensors
$(M(t_{i})_{i\in I})$. Loosely speaking, one can think of a $G$-object in
$\mathsf{C}$ as an object with a family of tensors satisfying some
condition.\footnote{One change from the first version of the article is that I
have chosen to work directly with $G$-objects rather than choosing a defining
representation and tensors. This avoids making choices and reveals the basic
constructions to be more obviously canonical.}

When the representations of $G$ can be described explicitly, so can the
$G$-objects in $\mathsf{C}$ (cf. \cite{rapoportR1996}, 3.3).

\begin{example}
\label{a4}If $G=\GL(V)$, then to give a $G$-object in $\mathsf{C}$ amounts to
giving an object $X$ in $\mathsf{C}$ of dimension $\dim V$. To see this, note
that because exact tensor functors preserve traces, a $G$-object
$M\colon\Rep(G)\rightarrow\mathsf{C}$ will map $V$ to an object $M(V)$ of
dimension $\dim V$ in $\mathsf{C}$. Conversely, let $X$ be an object of
dimension $\dim V$ in $\mathsf{C}$. For each $n\in\mathbb{N}$, choose an
object $\Sym^{n}V$ in $\Rep(G)$, and for each partition $\lambda$ of $n$, let
$S_{\lambda}V$ be the image of the Schur operator in $\Sym^{n}V$. Then the
representations $S_{\lambda}V$ form a set of representatives for the
isomorphism classes of simple representations of $G$ (\cite{fultonH1991},
15.47). There exists a skeleton $\Rep(G)^{\prime}$ whose objects are direct
sums of $S_{\lambda}V$s. Now repeat the process in $\mathsf{C}$ with $V$
replaced by $X$. There is an exact tensor functor $\Rep(G)^{\prime}%
\rightarrow\mathsf{C}$ sending each $S_{\lambda}V$ to $S_{\lambda}X$ and each
chosen direct sum in $\Rep(G)^{\prime}$ to the corresponding direct sum in
$\mathsf{C}$. The functor $\Rep(G)^{\prime}\rightarrow\Rep(G)$ is a tensor
equivalence, and so has a tensor inverse (\cite{saavedra1972}, I 4.4).
Therefore, we obtain a $G$-object $M$ of $\mathsf{C}$ with $M(V)=X$. Moreover,
$M$ is uniquely determined by $X$ up to a unique isomorphism.
\end{example}

\begin{example}
\label{a5}If $G=\SP_{n}$, then to give a $G$-object in $\mathsf{C}$ amounts to
giving an object $X$ of dimension $n$ together with a non-degenerate
alternating pairing%
\[
X\otimes X\rightarrow\1\text{.}%
\]
The proof is similar to the last example.
\end{example}

\subsubsection{Objects with $G$-structure in a Tate triple}

Recall that a \emph{Tate triple} \textsf{\thinspace}$\mathsf{T}$ over a field
$Q$ is a tannakian category $\mathsf{C}$ over $Q$ equipped with a (weight)
$\mathbb{Z}$-gradation $w\colon\mathbb{G}_{m}\mathrm{\rightarrow}%
\underline{\Aut}^{\otimes}(\id_{\mathsf{C}})$ and an invertible (Tate) object
$T$ of weight $-2$. A tensor functor of Tate triples is a tensor functor
$\eta$ of tannakian categories preserving the gradation together with an
isomorphism $\eta(T)\rightarrow T^{\prime}$. A fibre functor on $\mathsf{T}$
is a fibre functor $\omega$ on $\mathsf{C}$ together with an isomorphism
$\omega(T)\rightarrow\omega(T^{\otimes2})$ (equivalently, an isomorphism
$R\rightarrow\omega(T)$).

\begin{example}
\label{a6}The category $\mathrm{\Hdg}_{\mathbb{Q}}$ of rational Hodge
structures becomes a Tate triple with the weight gradation and the Tate object
$\mathbb{Q}(1)$ (equal to $2\pi i\mathbb{Q}\subset\mathbb{C}$ with the Hodge
structure of weight $-2$).
\end{example}

\begin{example}
\label{a7}To give a Tate triple structure on $\Rep(G)$ is the same as giving a
central homomorphism $w\colon\mathbb{G}_{m}\rightarrow G$ and a homomorphism
$t\colon G\rightarrow\mathbb{G}_{m}$ such that $t\circ w=-2$. The Tate object
is any one-dimensional space on which $G$ acting through $t$. We shall call
$(t,w)$ a \emph{Tate triple structure} on $G$.
\end{example}

Consider $(G,w,t)$ and a Tate triple $\mathsf{T}=(\mathsf{C},w,T)$. An object
in $\mathsf{T}$ with a $(G,w,t)$-structure (or, more briefly, a $(G,w,t)$%
-object in $\mathsf{T}$ or $\mathsf{C}$) is an exact tensor functor
$M\colon\Rep(G)\rightarrow\mathsf{T}$ of Tate triples.

\begin{example}
\label{a8}Let $\psi$ be a nondegenerate alternating pairing $V\times
V\rightarrow Q$ on the finite dimensional vector space $V$, and let
$G=\GSp(\psi)$. Thus%
\[
G(Q)=\{g\in\GL(V)\mid\psi(gv,gv^{\prime})=t(g)\psi(v,v^{\prime})\text{ some
}t(g)\in Q^{\times}\}.
\]
Let $w\colon\mathbb{G}_{m}\rightarrow G$ be the homomorphism such that $w(c)$
acts on $V$ as multiplication by $c^{-1}$ for $c\in Q^{\times}$. Then $(w,t)$
is a Tate triple structure on $G$, and to give a $(G,w,t)$-object in a Tate
triple $(\mathsf{C},w,T)$ amounts to giving an object $X$ of $\mathsf{C}$ of
dimension $\dim V$ together with a nondegenerate alternating pairing%
\[
X\otimes X\rightarrow T.
\]

\end{example}

\section{The category of motives over $\mathbb{F}$}

In this section, we define a category of motives $\Mot(\mathbb{F})$ over
$\mathbb{F}$ with the Weil protorus as its fundamental group, standard fibre
functors, and a canonical polarization; moreover, there is a reduction functor
from the category $\CM(\mathbb{Q}^{\text{\textrm{al}}})$ of CM-motives to
$\Mot(\mathbb{F}{})$. At present, the category depends on the choice of a
fibre functor $\omega_{0}$ on $\CM(\mathbb{Q}^{\text{\textrm{al}}})$ with
certain properties. However, the rationality conjecture for CM abelian
varieties (\cite{milne2009}, 4.1) implies that there is a unique preferred
$\omega_{0}$, and the Hodge conjecture for abelian varieties of CM-type
implies that, with this $\omega_{0}$, $\Mot(\mathbb{F})$ is indeed the
category of abelian motives over $\mathbb{F}$ defined using algebraic cycles
modulo numerical equivalence.

\subsection{The realization categories}

\subsubsection{The realization category at infinity.}

Let $\mathsf{R}_{\infty}$ be the category of pairs $(V,F)$ consisting of a
$\mathbb{Z}$-graded finite-dimensional complex vector space $V=\bigoplus
\nolimits_{m\in\mathbb{Z}}V^{m}$ and an $\iota$-semilinear endomorphism $F$
such that $F^{2}=(-1)^{m}$ on $V^{m}$. With the obvious tensor structure,
$\mathsf{R}_{\infty}$ becomes a tannakian category over $\mathbb{R}$ with
fundamental group $\mathbb{G}_{m}$. The objects fixed by $\mathbb{G}_{m}$ are
those of weight zero. If $(V,F)$ is of weight zero, then%
\[
V^{F}\overset{\textup{{\tiny def}}}{=}\{v\in V\mid Fv=v\}
\]
is an $\mathbb{R}$-structure on $V$. The functor $V\rightsquigarrow V^{F}$ is
an $\mathbb{R}$-valued fibre functor on $\mathsf{R}_{\infty}^{\mathbb{G}_{m}}$.

\subsubsection{The realization category at $\ell\neq p,\infty.$}

Let $\mathsf{R}_{\ell}$ be the category of finite-dimensional $\mathbb{Q}%
_{\ell}$-vector spaces. It is a tannakian category with trivial fundamental
group. The forgetful functor is a $\mathbb{Q}_{\ell}$-valued fibre functor on
$\mathsf{R}_{\ell}$.

\subsubsection{The crystalline realization category.}

Let $\mathsf{R}_{p}$ be the category of $F$-isocrystals over $\mathbb{F}$.
Thus, an object of $\mathsf{R}_{p}$ is a pair $(V,F)$ consisting of a
finite-dimensional vector space $V$ over $B(\mathbb{F})$ and a $\sigma
$-semilinear isomorphism $F\colon V\rightarrow V$. With the obvious tensor
structure, $\mathsf{R}_{p}$ becomes a tannakian category over $\mathbb{Q}_{p}$
whose fundamental group is the universal covering group $\mathbb{G}$ of
$\mathbb{G}_{m}$ (so $X^{\ast}(\mathbb{G})=\mathbb{Q}$). The objects fixed by
$\mathbb{G}$ are those of slope zero. If $(V,F)$ is of slope zero, then%
\[
V^{F}\overset{\textup{{\tiny def}}}{=}\{v\in V\mid Fv=v\}
\]
is a $\mathbb{Q}_{p}$-structure on $V$. The functor $V\rightsquigarrow V^{F}$
is a $\mathbb{Q}_{p}$-valued fibre functor on $\mathsf{R}_{p}^{\mathbb{G}}$.

\subsection{The category of CM-motives over $\mathbb{Q}^{\mathrm{al}}$}

By a Hodge class on an abelian variety $A$ over a field $k$ of characteristic
zero, we mean an absolute Hodge cycle in the sense of \cite{deligne1982}, and
we let $\mathcal{B}(A)$ denote the $\mathbb{Q}$-algebra of such classes on $A$.

Let $\CM(\mathbb{Q}^{\mathrm{al}})$ be the category of CM-motives over
$\mathbb{Q}^{\mathrm{al}}$. Thus, an object of $\CM(\mathbb{Q}^{\text{al}})$
is a triple $(A,e,m)$ with $A$ an abelian variety of CM-type over
$\mathbb{Q}^{\mathrm{al}}$, $e$ an idempotent in the ring $\mathcal{B}^{\dim
A}(A\times A)$, and $m$ an integer; the morphisms are given by the rule,%
\[
\Hom((A,e,m),(B,f,n))=f\cdot\mathcal{B}^{\dim A-m+n}(A\times B)\cdot e.
\]
With the usual tensor structure, $\CM(\mathbb{Q}^{\mathrm{al}})$ becomes a
semisimple tannakian category over $\mathbb{Q}$. Its fundamental group is the
Serre group $S$. Recall that $S$ has a canonical (weight) cocharacter
$w=w^{S}$ defined over $\mathbb{Q}$, and a canonical cocharacter $\mu=\mu^{S}$
such that $w=-(1+\iota)\mu$; moreover, the pair $(S,\mu^{S})$ is universal.

\subsubsection{The local realization at $\infty$.}

Let $(V,h)$ be a real Hodge structure, and let $C$ act on $V$ as $h(i)$. Then
the square of the operator $v\mapsto C\bar{v}$ acts as $(-1)^{m}$ on $V^{m}$.
Therefore, $\mathbb{C}\otimes_{\mathbb{R}}V$ endowed with its weight gradation
and this operator is an object of $\mathsf{R}_{\infty}.$ We let
\[
\xi_{\infty}\colon\CM(\mathbb{Q}^{\mathrm{al}})\rightarrow\mathsf{R}_{\infty
},\quad X\rightsquigarrow(\omega_{B}(X)_{\mathbb{R}{}},C),
\]
denote the functor sending $X$ to the object of $\mathsf{R}_{\infty}$ defined
by the real Hodge structure $\omega_{B}(X)_{\mathbb{R}}$. Then $\xi_{\infty}$
is an exact tensor functor, and the cocharacter $x_{\infty}\colon
\mathbb{G}_{m}\rightarrow S_{\mathbb{R}}$ it defines is equal to
$w_{\mathrm{\mathbb{R}}}$. We obtain an $\mathbb{R}$-valued fibre functor
$\omega_{\infty}$ on $\CM(\mathbb{Q}^{\mathrm{al}})^{\mathbb{G}_{m}}$ as follows:%

\[
\xymatrixcolsep{3pc}\xymatrix{
\CM(\mathbb{Q}^{\mathrm{al}})^{\mathbb{G}_m}\ar[r]_-{\xi_{\infty}}\ar@/^1pc/[rr]^{\omega_\infty}
&\mathsf{R}_{\infty}^{\mathbb{G}_m}\ar[r]_-{V\rightsquigarrow
V^{F}}&\Vc(\mathbb{R}).
}
\]

\subsubsection{The local realization at $\ell$.}

For each $\ell\neq p,\infty$, we let $\omega_{\ell}$ denote the fibre functor
on $\CM(\mathbb{Q}^{\mathrm{al}})$ defined by $\ell$-adic \'{e}tale cohomology.

\subsubsection{The local realization at $p$.}

A CM abelian variety $A$ over $\mathbb{Q}^{\mathrm{al}}$ has good reduction at
the prime $v$ to an abelian variety $A_{0}$ over $\mathbb{F}$. The map%
\[
(A,e,m)\mapsto e\cdot H_{\mathrm{crys}}^{\ast}(A_{0})(m)
\]
extends to an exact tensor functor
\[
\xi_{p}\colon\CM(\mathbb{Q}^{\mathrm{al}})\rightarrow\mathsf{R}_{p}\text{.}%
\]
Let $x_{p}$ denote the homomorphism $\mathbb{G}\rightarrow S_{\mathbb{Q}_{p}}$
defined by $\xi_{p}$. We obtain a $\mathbb{Q}_{p}$-valued fibre functor
$\omega_{p}$ on $\CM(\mathbb{Q}^{\mathrm{al}})^{\mathbb{G}}$ as follows:%
\[
\xymatrixcolsep{3pc}\xymatrix{
\CM(\mathbb{Q}^{\mathrm{al}})^{\mathbb{G}}\ar[r]_-{\xi_{p}}\ar@/^1pc/[rr]^{\omega_p}
&\mathsf{R}_{p}^{\mathbb{G}}\ar[r]_-{V\rightsquigarrow
V^{F}}&\Vc(\mathbb{Q}_{p}).
}
\]

\subsection{The Shimura-Taniyama homomorphism}

Let $P$ be the Weil-number protorus (see, for example, \cite{milne1994},
\S 2). Thus, $P$ is a protorus over $\mathbb{Q}$, and every element of
$W\overset{\textup{{\tiny def}}}{=}X^{\ast}(P)$ is represented by a Weil
$p^{n}$-number $\pi$; two pairs $(\pi,n)$ and $(\pi^{\prime},n^{\prime})$
represent the same element of $P$ if and only if $\pi^{n^{\prime}N}=\pi^{nN}$
for some integer $N\geq1$. Define homomorphisms%
\begin{align*}
z_{\infty}\colon\mathbb{G}_{m}  &  \rightarrow P,\quad\langle\lbrack
\pi,n],z_{\infty}\rangle=m\text{ if }|\pi|_{\infty}=(p^{n/2})^{m},\\
z_{p}\colon\mathbb{G}  &  \rightarrow P,\quad\langle\lbrack\pi,n],z_{p}%
\rangle=-\frac{\ord_{v}(\pi)}{\ord_{v}(p^{n})}\text{.}%
\end{align*}
There is a unique homomorphism $r^{\mathrm{ST}}\colon P\rightarrow S$, which I
call the \emph{Shimura-Taniyama homomorphism,} sending $z_{\infty}$ to
$x_{\infty}$ and $z_{p}$ to $x_{p}$. The homomorphism $r^{\mathrm{ST}}$ is
injective, which allows us to identify $P$ with a subgroup of $S$.

\begin{aside}
Let $K$ be a CM field. A CM-type on $K$ is a cocharacter $f$ of $S^{K}$ of
weight $-1$ taking values in $\{0,1\}$. Let $A_{f}$ be an abelian variety of
CM-type $f$ over $\mathbb{Q}^{\mathrm{al}}$. Then $A_{f}$ has good reduction
to an abelian variety $B_{f}$ over $\mathbb{F}$, which defines an element
$[\pi(f)]$ of $W^{K}$. The Shimura-Taniyama formula expresses $[\pi(f)]$ in
terms of $f$. The Shimura-Taniyama homomorphism is the unique homomorphism
$r\colon P\rightarrow S$ such that $X^{\ast}(r)$ maps every CM-type $f$ to
$[\pi(f)]$. This explains the choice of name.
\end{aside}

\subsection{The category of abelian motives over $\mathbb{F}$}

For a $\mathbb{Q}$-valued fibre functor $\omega$ on a tannakian category, let
$\omega(l)$ denote the $\mathbb{Q}_{l}$-valued fibre functor
$X\rightsquigarrow\mathbb{Q}_{l}\otimes_{\mathbb{Q}}\omega(X)$.

The fibre functors $\omega_{l}$ on $\CM(\mathbb{Q}^{\mathrm{al}})$ constructed
above restrict to fibre functors $\omega_{l}|$ on $\CM(\mathbb{Q}%
^{\mathrm{al}})^{P}$.

\begin{theorem}
\label{a9}There exists a $\mathbb{Q}$-valued fibre functor $\omega_{0}$ on
$\CM(\mathbb{Q}^{\mathrm{al}})^{P}$ such that%
\[
\omega_{0}(l)\approx\omega_{l}|\quad
\]
for all $l$ (including $p$ and $\infty$).
\end{theorem}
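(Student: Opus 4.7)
The strategy is to reformulate existence as a descent problem for a torsor under the fundamental group of $\CM(\mathbb{Q}^{\mathrm{al}})^{P}$, and then to verify the vanishing of a cohomological obstruction. The fundamental group in question is $T:=S/P$, a protorus over $\mathbb{Q}$ that splits over a CM-subfield of $\mathbb{Q}^{\mathrm{al}}$. Since the Betti fibre functor $\omega_{B}$ already neutralizes $\CM(\mathbb{Q}^{\mathrm{al}})\simeq\Rep_{\mathbb{Q}}(S)$, its restriction $\omega_{B}|$ provides one $\mathbb{Q}$-valued fibre functor on $\CM(\mathbb{Q}^{\mathrm{al}})^{P}\simeq\Rep_{\mathbb{Q}}(T)$, and any other $R$-valued fibre functor on this subcategory differs from $\omega_{B}|\otimes_{\mathbb{Q}}R$ by a $T$-torsor over $R$. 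Thus each $\omega_{l}|$ determines a class $c_{l}\in H^{1}(\mathbb{Q}_{l},T)$, and Theorem~\ref{a9} is equivalent to the assertion that $(c_{l})_{l}$ lies in the image of the localization map $H^{1}(\mathbb{Q},T)\to\bigoplus_{l}H^{1}(\mathbb{Q}_{l},T)$; any such lift $c$ then defines the required $\omega_{0}$ as the twist of $\omega_{B}|$ by $c$.

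Next I would compute the local classes. For $\ell\neq p,\infty$, the classical Betti--\'{e}tale comparison gives $\omega_{B}\otimes_{\mathbb{Q}}\mathbb{Q}_{\ell}\simeq\omega_{\ell}$, so $c_{\ell}=0$. At $\infty$, $\omega_{\infty}$ takes $F$-invariants in $\omega_{B}(X)\otimes_{\mathbb{Q}}\mathbb{C}$ for the $\iota$-semilinear operator $F=C\circ(1\otimes\iota)$ coming from the Hodge structure; comparing this $\mathbb{R}$-form with the standard one $\omega_{B}(X)\otimes_{\mathbb{Q}}\mathbb{R}$ produces an explicit cocycle for $c_{\infty}\in H^{1}(\mathbb{R},T)$, expressible through the image of $x_{\infty}=w_{\mathbb{R}}$ in $T_{\mathbb{R}}$. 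At $p$, the decisive input is the main result of \cite{wintenberger1991}, which describes the filtered Dieudonn\'{e} module attached to a CM-motive explicitly enough to make $\omega_{p}$ computable and to express $c_{p}\in H^{1}(\mathbb{Q}_{p},T)$ in terms of the slope cocharacter $x_{p}$ reduced modulo $P$.

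The main obstacle is the cohomological gluing. By Poitou--Tate duality for the protorus $T$, the image of $H^{1}(\mathbb{Q},T)\to\bigoplus_{l}H^{1}(\mathbb{Q}_{l},T)$ is the annihilator of a canonical subgroup of $H^{2}(\mathbb{Q},X^{\ast}(T))$, so one must check that $(c_{l})_{l}$ pairs trivially with every class in that subgroup. The crucial tool here is the Shimura--Taniyama homomorphism $r^{\mathrm{ST}}\colon P\hookrightarrow S$, constructed precisely so that $x_{\infty}=r^{\mathrm{ST}}\circ z_{\infty}$ and $x_{p}=r^{\mathrm{ST}}\circ z_{p}$: both local obstructions originate from a single global cocharacter source inside $P$, so, when paired with characters of $S$ that are trivial on $P$ (i.e.\ characters of $T$), they should cancel in the expected reciprocity-law way. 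Once this annihilation is verified, any lift $c$ of $(c_{l})_{l}$ produces $\omega_{0}$ with $\omega_{0}(l)\approx\omega_{l}|$ for every $l$.
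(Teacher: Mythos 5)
Your reduction is the same one the paper makes: fibre functors on $\CM(\mathbb{Q}^{\mathrm{al}})^{P}\simeq\Rep_{\mathbb{Q}}(S/P)$ are classified, via $\underline{\Hom}^{\otimes}(\omega_{B}|,\omega)$, by $S/P$-torsors, every torsor arises, and the theorem becomes the assertion that the family of local classes $(c_{l})_{l}$ determined by the $\omega_{l}|$ lies in the image of $H^{1}(\mathbb{Q},S/P)\rightarrow\prod_{l}H^{1}(\mathbb{Q}_{l},S/P)$. Up to that point your proposal and the paper agree, including the role of Wintenberger's theorem in making the class at $p$ computable. The gap is that the decisive step --- the local-global existence itself --- is not proved in your sketch: you reduce it to a Poitou--Tate annihilation condition and then assert that the pairings ``should cancel in the expected reciprocity-law way'' because $x_{\infty}$ and $x_{p}$ both factor through the Shimura--Taniyama embedding of $P$. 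That cancellation is exactly the content of the theorem, and it is not a formal reciprocity check: the paper invokes \cite{milne2003}, Theorem 4.1, for it, and the proof there is a substantial computation with the finite-level quotients $S^{K}/P^{K}$ ($K$ a CM field), in which Wintenberger's theorem is needed precisely to replace the crystalline class $c_{p}^{K}$ by an elementarily defined one before any global comparison can be made. Without either citing that result or actually carrying out the verification, your argument establishes only the (comparatively easy) equivalence of the theorem with a statement in Galois cohomology.

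Two further points would need repair even as a plan. First, in the Poitou--Tate description of the image of the localization map for a torus $T$, the adelic class must annihilate (the image of) $H^{1}(\mathbb{Q},X^{\ast}(T))$ in $\bigoplus_{l}H^{1}(\mathbb{Q}_{l},X^{\ast}(T))$ under the local cup-product pairings; it is not the annihilator of a subgroup of $H^{2}(\mathbb{Q},X^{\ast}(T))$ as you state. Second, $S/P$ is a protorus, not a torus, so the duality argument can only be run at finite levels $S^{K}/P^{K}$ and one must then control the passage to the limit over $K$ (compatibility of the chosen global classes, surjectivity of transition maps, the usual $\varprojlim$ issues); your sketch does not address this, whereas it is an essential part of the argument in \cite{milne2003}.
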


\begin{proof}
Let $\omega_{B}$ be the Betti fibre functor on $\CM(\mathbb{Q}%
^{\text{\textrm{al}}})$. For a $\mathbb{Q}$-valued fibre functor $\omega$ on
$\CM(\mathbb{Q}^{\mathrm{al}})^{P}$, let $\wp(\omega)=\underline
{\Hom}^{\otimes}(\omega_{B},\omega)$. This is an $S/P$-torsor, and the theory
of tannakian categories shows that every $S/P$-torsor is isomorphic to
$\wp(\omega)$ for some fibre functor $\omega$. The fibre functor $\omega$ will
satisfy the condition in the proposition if and only if the class of
$\wp(\omega)$ in $H^{1}(\mathbb{Q},S/P)$ maps to the class of $\wp(\omega
_{l})$ in $H^{1}(\mathbb{Q}_{l},S/P)$ for all $l$. That such a class exists is
proved in \cite{milne2003}, Theorem 4.1.\footnote{It should be noted that the
proof of \cite{milne2003}, Lemma 4.2, requires the main result of
\cite{wintenberger1991} in order to replace the cohomology class $c_{p}^{K}$
with an elementarily defined class (cf.\ ibid. p31).}
\end{proof}

The isomorphism class of the restriction of $\omega_{0}$ to any algebraic
subcategory of $\CM(\mathbb{Q}^{\mathrm{al}})^{P}$ is uniquely determined, but
this is not true for $\omega_{0}$ itself (\cite{milne2003}, 1.11). The
rationality conjecture (\cite{milne2009}, 4.1) for CM abelian varieties
implies that there is a unique preferred $\omega_{0}$ satisfying the condition
of the proposition.\footnote{In more detail, the rationality conjecture for CM
abelian varieties implies that there exists a (unique!) good theory of
rational Tate classes on abelian varieties over finite fields and that Hodge
classes on CM abelian varieties reduce to rational Tate classes. We can use
the rational Tate classes to construct a category $\Mot(\mathbb{F}{})$ of
abelian motives over $\mathbb{F}{}$, and there will be a canonical reduction
functor $R\colon\CM(\mathbb{Q}^{\mathrm{al}})\rightarrow\Mot(\mathbb{F}{})$.
The functor $X\rightsquigarrow\Hom(\1,R(X))$ is a $\mathbb{Q}{}$-valued fibre
functor $\omega_{0}$ on $\CM(\mathbb{Q}^{\mathrm{al}})^{P}$, and $R$ defines
an equivalence $\CM(\mathbb{Q}^{\mathrm{al}})/\omega_{0}\rightarrow
\Mot(\mathbb{F)}$. If the rationality conjecture holds for all abelian
varieties over $\mathbb{Q}{}^{\mathrm{al}}$ with good reduction, then this
gives a functor from the category of motives generated by such abelian
varieties to $\Mot(\mathbb{F)}$, which would simplify the proof of the
Langlands-Rapoport conjecture.}

Choose a fibre functor $\omega_{0}$ as in the proposition, and define
$\Mot_{\omega_{0}}(\mathbb{F})$ to be the corresponding quotient category
\[
R\colon\CM(\mathbb{Q}^{\mathrm{al}})\rightarrow\Mot_{\omega_{0}}(\mathbb{F})
\]
in the sense of \cite{milne2007}. Thus $\Mot_{\omega_{0}}(\mathbb{F})$ is a
semisimple tannakian category over $\mathbb{Q}$ with fundamental group $P,$
which can be described as follows. For a CM abelian variety $A$ over
$\mathbb{Q}^{\mathrm{al}}$, let $\mathcal{B}_{\omega_{0}}(A)=\omega
_{0}(h(A)^{P})$ where $h(A)$ is the object $(A,1,0)$ of $\CM(\mathbb{Q}%
^{\text{al}})$. The objects of $\Mot_{\omega_{0}}(\mathbb{F})$ are the triples
$(A,e,m)$ with $A$ a CM abelian variety over $\mathbb{Q}^{\mathrm{al}}$, $e$
an idempotent in the ring $\mathcal{B}_{\omega_{0}}^{\dim A}(A\times A)$, and
$m\in\mathbb{Z}$. We sometimes write $\hbar(A,e,m)$ for the object $(A,e,m)$
of $\Mot_{\omega_{0}}(\mathbb{F})$. Note that the maps%
\[
\mathcal{B}(A)\simeq\Hom(\1,h\left(  A\right)  )\simeq\Hom(\1,h\left(
A\right)  ^{P})\overset{\omega_{0}}{\hookrightarrow}\Hom(\mathbb{Q}%
,\mathcal{B}_{\omega_{0}}(A))\simeq\mathcal{B}_{\omega_{0}}(A)
\]
realize $\mathcal{B}(A)$ as a subspace of $\mathcal{B}_{\omega_{0}}(A)$. The
functor $R$ is
\[
(A,e,m)\rightsquigarrow\hbar(A,e,m)
\]
(on the right $e$ is to be regarded as an element of $\mathcal{B}_{\omega_{0}%
}(A)$). For any objects $X$ and $Y$ of $\CM(\mathbb{Q}^{\mathrm{al}})$,%
\[
\Hom(RX,RY)=\omega_{0}(\underline{\Hom}(X,Y)^{P}).
\]

\noindent\begin{minipage}{3in}
The choice of an isomorphism $\omega_{0}(l)\rightarrow\omega_{l}|$ determines
an exact tensor functor
\[\zeta_{l}\colon\Mot_{\omega_{0}}(\mathbb{F})\rightarrow\mathsf{R}_{l}\]
such $\zeta_{l}\circ R=\xi_{l}$ (see
\cite{milne2007}, end of \S 2). Therefore, such a choice determines a
commutative diagram of tannakian categories and exact tensor functors as at right:
\end{minipage}\hfill\begin{minipage}{2.5in}\[
\xymatrixrowsep{1pc}
\xymatrix@1{\CM(\mathbb{Q}^{\text{al}})\ar[dd]^{R}\ar[rd]|-{\xi_{l}}\\
&\mathsf{R}_{l}\ar[r]^-{\text{forget}}&{\left\{
\begin{array}
[c]{l}%
\Vc_{\mathbb{C}}\\
\Vc_{\mathbb{Q}_{\ell}}\\
\Vc_{B(\mathbb{F})}
\end{array}
\right.  }\\
\Mot_{\omega_0}(\mathbb{F})\ar[ru]|{\zeta_{l}}\ar[rru]_{\omega_{l}=\omega_{l}^\mathbb{F}}
}\]
\end{minipage}

\noindent The canonical polarization on $\CM(\mathbb{Q}^{\text{al}})$
(regarded as a Tate triple) passes to the quotient and defines a polarization
on $\Mot(\mathbb{F}{})$ (see \cite{milne2002}).

Note that, for a motive $M$ over $\mathbb{F}$, we have defined $\omega
_{l}^{\mathbb{F}}(M)$ to be the vector space underlying $\zeta_{l}(M)$, and so
for $l=p,\infty$ it has a Frobenius operator $F$ and for $l\neq p,\infty$ it
has a germ of a Frobenius operator (\cite{milne1994}, p422).

\noindent\emph{We now fix a fibre functor }$\omega_{0}$\emph{ and isomorphisms
}$\omega_{0}(l)\rightarrow\omega_{l}|$ \emph{as above, and we write
}$\Mot(\mathbb{F})$ for $\Mot_{\omega_{0}}(\mathbb{F})$.

\subsection{The category of motives over $\mathbb{F}$ with $\mathbb{Z}{}%
_{(p)}$-coefficients}

Define a \emph{motive} $M$ \emph{over} $\mathbb{F}$ \emph{with coefficients in
}$\mathbb{Z}{}_{(p)}$ to be a triple $(M_{p},M_{0},m)$ consisting of

\begin{enumerate}
\item a finitely generated $W(\mathbb{F})$-module $M_{p}$ and a $\sigma
$-linear map $F\colon M_{p}\rightarrow M_{p}$ whose kernel is torsion,

\item an object $M_{0}$ of $\Mot(\mathbb{F})$, and

\item an isomorphism $m\colon(M_{p})_{\mathbb{Q}}\rightarrow\omega
_{p}^{\mathbb{F}}(M_{0})$.
\end{enumerate}

\noindent A morphism of motives $\alpha\colon M\rightarrow N$ with
coefficients in $\mathbb{Z}{}_{(p)}$ is a pair of morphisms%
\[
(\alpha_{p}\colon M_{p}\rightarrow N_{p},\alpha_{0}\colon M_{0}\rightarrow
N_{0})
\]
such that $n\circ\alpha_{p}=\omega_{p}^{\mathbb{F}}(\alpha_{0})\circ m$. Let
$\Mot_{(p)}(\mathbb{F})$ be the category of motives over $\mathbb{F}{}$ with
coefficients in $\mathbb{Z}{}_{(p)}$, and let $\Mot_{(p)}^{\prime}%
(\mathbb{F})$ be the full subcategory of objects $M$ such that $M_{p}$ is
torsion-free. The objects of $\Mot_{(p)}^{\prime}(\mathbb{F})$ will be called torsion-free.

\begin{proposition}
\label{a10}With the obvious tensor structure, $\Mot_{(p)}(\mathbb{F})$ is an
abelian tensor category over $\mathbb{Z}_{(p)}$, and $\Mot_{(p)}^{\prime
}(\mathbb{F})$ is a rigid pseudo-abelian tensor subcategory of $\Mot_{(p)}%
(\mathbb{F})$. Moreover:

\begin{enumerate}
\item the tensor functor $M\rightsquigarrow M_{p}\colon\Mot_{(p)}%
(\mathbb{F})\rightarrow\Mod_{W(\mathbb{F})}$ is exact;

\item there are canonical equivalences of categories
\[
\Mot_{(p)}^{\prime}(\mathbb{F})_{(\mathbb{Q})}\rightarrow\Mot_{(p)}%
(\mathbb{F})_{(\mathbb{Q})}\rightarrow\Mot(\mathbb{F});
\]

\item for all torsion-free motives $M,N$, the cokernel of%
\[
\Hom(M,N)\otimes_{\mathbb{Z}_{(p)}}W(\mathbb{F})\rightarrow\Hom(M_{p},N_{p})
\]
is torsion-free.
\end{enumerate}
\end{proposition}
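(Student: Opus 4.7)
The plan is to reduce every statement to the component structure, exploiting (i) that $\Mot(\mathbb{F})$ is an abelian tensor category with the exact fibre functor $\omega_p^{\mathbb{F}}$ landing in $\mathsf{R}_p$, (ii) that $W(\mathbb{F})$ is a discrete valuation ring, and (iii) that rationalization $-\otimes_{W}\mathbb{Q}$ is exact on finitely generated $W$-modules. To set up the abelian structure of $\Mot_{(p)}(\mathbb{F})$, given $\alpha=(\alpha_p,\alpha_0)\colon M\rightarrow N$ I would define $\ker(\alpha)$ componentwise as $(\ker\alpha_p,\ker\alpha_0,m_{\ker})$ and analogously $\coker(\alpha)$; the comparison isomorphism $m_{\ker}$ exists because both $\omega_p^{\mathbb{F}}$ and rationalization are exact, and the compatibility $n\circ\alpha_p=\omega_p^{\mathbb{F}}(\alpha_0)\circ m$ induces it canonically. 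Checking $\text{image}=\text{coimage}$ is then componentwise. Tensor products are defined in the obvious way. Statement (a) falls out immediately, since the first-projection functor is by definition the one that computes kernels and cokernels.

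For the torsion-free subcategory $\Mot_{(p)}^{\prime}(\mathbb{F})$: kernels remain torsion-free because submodules of torsion-free $W$-modules are torsion-free, but cokernels may gain torsion, so one only has a pseudo-abelian structure (idempotents split componentwise, using that $\Mot(\mathbb{F})$ is tannakian hence pseudo-abelian). Rigidity follows because a finitely generated torsion-free $W(\mathbb{F})$-module is free, so $M_p^{\vee}=\Hom_{W}(M_p,W)$ is a bona fide dual, $M_0$ admits $M_0^{\vee}$ in $\Mot(\mathbb{F})$, and $m$ induces $m^{\vee}$ giving a dual in $\Mot_{(p)}^{\prime}(\mathbb{F})$.

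For (b), the first functor $\Mot_{(p)}^{\prime}(\mathbb{F})_{(\mathbb{Q})}\rightarrow\Mot_{(p)}(\mathbb{F})_{(\mathbb{Q})}$ is induced by inclusion, hence fully faithful; essential surjectivity follows because any $M$ admits a quotient map $M\rightarrow (M_p/M_p^{\text{tors}},M_0,m')$ whose kernel is killed after $\otimes\mathbb{Q}$. For the second functor $M\rightsquigarrow M_0$, full faithfulness after $\otimes\mathbb{Q}$ comes from the fact that the compatibility constraint forces $\alpha_p\otimes\mathbb{Q}=n^{-1}\circ\omega_p^{\mathbb{F}}(\alpha_0)\circ m$, so $\alpha_0$ determines $\alpha_p$ up to torsion and conversely any $\alpha_0$ can be lifted after inverting $p$; essential surjectivity is obtained by producing, for given $M_0$, a finitely generated $W$-submodule of $\omega_p^{\mathbb{F}}(M_0)$ stable under $F$ and spanning rationally (via Dieudonn\'e--Manin, after rescaling in each slope component).

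The main obstacle I anticipate is (c), which requires the saturation (not merely injectivity) of the image of $\Hom(M,N)\otimes_{\mathbb{Z}_{(p)}}W(\mathbb{F})$ in $\Hom(M_p,N_p)$. The approach would be: $\Hom(M_p,N_p)$ is a finitely generated $\mathbb{Z}_p$-module, and I would identify its rationalization with the $F$-invariants in $\Hom_{B(\mathbb{F})}(\omega_p^{\mathbb{F}}(M_0),\omega_p^{\mathbb{F}}(N_0))$, which in turn contains $W(\mathbb{F})\otimes_{\mathbb{Z}_{(p)}}\Hom(M_0,N_0)$ as a natural subspace by fibre-functor formalism. The claim then becomes that if $p\phi$ lies in the image, then so does $\phi$; this follows because the image is characterized integrally by the condition of sending $M_p$ into $N_p$, and this is a $p$-saturated condition (divisibility by $p$ tests on the torsion-free quotients $M_p, N_p$). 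The verification will use Proposition \ref{a2} applied to the integral defining representation of the relevant group, together with the fact that $W(\mathbb{F})$ is a faithfully flat $\mathbb{Z}_{(p)}$-algebra whose residue field has dimension $\leq 1$.
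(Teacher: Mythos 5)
Your componentwise construction of the abelian structure, the exactness statement (a), and the idempotent-splitting argument for $\Mot_{(p)}^{\prime}(\mathbb{F})$ are fine (the paper itself offers no details here, only the reference to Milne and Ramachandran 2004, so these verifications are worth doing). The problems begin exactly where the Frobenius datum $F$ on $M_{p}$ has to be confronted. For essential surjectivity in (b) you propose to produce, for a given $M_{0}$, a finitely generated $W(\mathbb{F})$-submodule of $\omega_{p}^{\mathbb{F}}(M_{0})$ that is stable under $F$ and spans, ``via Dieudonn\'e--Manin, after rescaling in each slope component''. Rescaling a lattice by a power of $p$ does not change whether it is $F$-stable, and an isocrystal admits an $F$-stable lattice only when all slopes are $\geq 0$; in the paper's normalization $\omega_{p}(\1(1))=(B(\mathbb{F}),p^{-1}\sigma)$, so for the Tate object no such lattice exists. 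Hence either the datum $F\colon M_{p}\rightarrow M_{p}$ is not to be read as the restriction of the crystalline Frobenius under $m$ --- in which case your uses of that compatibility elsewhere (transfer of the torsion-kernel condition to cokernels, the induced $m^{\vee}$) have to be redone --- or this step simply fails. The same issue undermines your rigidity argument: you set $M_{p}^{\vee}=\Hom_{W}(M_{p},W)$ but never say what the $\sigma$-linear endomorphism of $M_{p}^{\vee}$ is, and the transported Frobenius $f\mapsto\sigma\circ f\circ F^{-1}$ does not preserve the dual lattice as soon as $F$ is divisible by $p$ (already for $(W,p\sigma)$ lying over $\1(-1)$); since evaluation and coevaluation force the dual lattice to be exactly $\Hom_{W}(M_{p},W)$, there is no freedom to repair this by choosing another lattice.

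The treatment of (c) is the most serious gap, because it assumes what is to be proved. Torsion-freeness of the cokernel says precisely that the $W(\mathbb{F})$-span of $\Hom(M,N)$ is saturated in $\Hom(M_{p},N_{p})$, and saturation is not a formal consequence of $\Hom(M,N)$ being ``characterized integrally by the condition of sending $M_{p}$ into $N_{p}$'': for a general $\mathbb{Q}$-subspace $U$ of a $B(\mathbb{F})$-vector space with lattice $\Lambda$, the $W$-span of $U\cap\Lambda$ need not be saturated in $\Lambda$ (take $\Lambda=W^{2}$ and $U$ spanned by $(1,0)$ and $(c,p)$ with $c\in W^{\times}$ whose reduction does not lie in $\mathbb{F}_{p}$: then $p\cdot(0,1)$ lies in $W\cdot(U\cap\Lambda)$ but $(0,1)$ does not). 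Moreover your own description of the target is inconsistent with the map you must study: if $\Hom(M_{p},N_{p})$ means $F$-equivariant maps (a finitely generated $\mathbb{Z}_{p}$-module whose rationalization is the $F$-invariants, as you say), then multiplying by a general $c\in W(\mathbb{F})$ destroys $F$-equivariance and the displayed map from $\Hom(M,N)\otimes_{\mathbb{Z}_{(p)}}W(\mathbb{F})$ does not land there; if instead it means all $W$-linear maps, then the saturation claim needs the $F$-structure in an essential way (for the supersingular-type object, the $W$-span of an order in the quaternion algebra inside $\End_{W}(M_{p})\simeq M_{2}(W)$ already has torsion cokernel). So the verification must use the precise structure on the $p$-component and the arguments of the cited Milne--Ramachandran paper, not the formal lattice reasoning sketched here; the appeal to Proposition \ref{a2} plays no visible role in it.
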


\begin{proof}
These can be proved in the same way as the similar statements in
\cite{milneR2004}.
\end{proof}

\section{Conjecture LR+}

In this section we state a conjecture that both strengthens and simplifies the
original conjecture of Langlands and Rapoport.

Throughout this section, $(G,X)$ is a Shimura $(p)$-datum satisfying (SV1-4,6)
--- see p\pageref{pdatum}.

\subsection{Motives with $G$-structure}

By a motive over $\mathbb{F}$ with $G$\emph{-structure}, or, more briefly, a
$G$\emph{-motive} over $\mathbb{F}$, we mean an exact tensor functor
$M\colon\Rep_{\mathbb{Q}}(G)\rightarrow\Mot(\mathbb{F})$. We say that two
$G$-motives are \emph{equivalent} if they are isomorphic as tensor functors.

We shall be especially interested in the $G$-motives for which%
\begin{equation}
\omega_{\infty}^{\mathbb{F}}\circ M\approx\mathbb{C}\otimes_{\mathbb{Q}%
}V,\quad\omega_{\ell}^{\mathbb{F}}\circ M\approx\mathbb{Q}_{\ell}%
\otimes_{\mathbb{Q}}V,\quad\omega_{p}^{\mathbb{F}}\circ M\approx
B(\mathbb{F})\otimes_{\mathbb{Q}}V,\quad\label{eq4}%
\end{equation}
(isomorphisms of fibre functors on $\Rep(G)$). Here $V$ denotes the forgetful
fibre functor $\xi\rightsquigarrow V(\xi)$ on $\Rep_{\mathbb{Q}}(G)$, and
$R\otimes_{\mathbb{Q}}V$ denotes the fibre functor $\xi\rightsquigarrow
R\otimes_{\mathbb{Q}}V(\xi)$.

\subsection{Integral structures at $p\qquad$}

\begin{plain}
\label{a11}Let $(G,X)$ be a Shimura $p$-datum. Recall that the reflex field
$E=E(G,X)$ is the field of definition of the $G(\mathbb{C})$-conjugacy class
$\mathcal{C}$ of cocharacters of $G_{\mathbb{C}}$ containing $\mu_{x}$ for all
$x\in X$. Because $G_{\mathbb{Q}_{p}}$ admits a hyperspecial subgroup, the
prime $v$ in $E$ is unramified, and so the closure $E_{v}$ of $E$ in
$(\mathbb{Q}^{\mathrm{al}})_{v}$ is a subfield of $B(\mathbb{F})$. Let $S$ be
a maximal split subtorus of $G_{B(\mathbb{F)}}$ whose apartment in the
Bruhat-Tits building of $G_{B(\mathbb{F})}$ contains the hyperspecial vertex
stabilized by $G(W(\mathbb{F}))$. Then $\mathcal{C}$ is represented by a
cocharacter $\mu_{0}$ of $S$ defined over $B(\mathbb{F})$ whose orbit under
the action of the Weyl group of $S$ is uniquely determined; moreover, the
elements of the Weyl group of $S$ are represented by elements in
$G(W(\mathbb{F}))$. (For more details and references, see \cite{milne1994s}, pp503--4.)
\end{plain}

Recall that $\Rep_{\mathbb{Q}}(G)_{(\mathbb{Q}_{l})}\simeq\Rep_{\mathbb{Q}%
_{l}}(G)$. Therefore, a $G$-motive $M$ defines by extension of scalars a
functor
\[
M(p)\colon\Rep_{\mathbb{Q}_{l}}(G)\rightarrow\Mot(\mathbb{F})_{(\mathbb{Q}%
_{l})}.
\]
We sometimes write $M_{(l)}$ for $\omega_{l}^{\mathbb{F}{}}\circ M$ (or its
extension $\omega_{l}^{\mathbb{F}{}}\circ M(l)$ to $\Rep_{\mathbb{Q}{}_{l}%
}(G)$).

Let $M$ be a $G$-motive such that $M_{(p)}\approx B(\mathbb{F})\otimes
_{\mathbb{Q}}V$.

\begin{definition}
\label{a12}A $p$\emph{-integral structure\/} on $M$ is an exact tensor functor
$\underline{\Lambda}\colon\Rep_{\mathbb{Z}_{p}}(G)\rightarrow
\Mod_{W(\mathbb{F})}$ such that

\begin{enumerate}
\item for all $\xi$ in $\Rep_{\mathbb{Z}_{p}}(G)$, we have that $\underline
{\Lambda}(\xi)$ is a $W(\mathbb{F})$-lattice in $M_{(p)}(\xi_{\mathbb{Q}_{p}%
})$, and

\item there exists an isomorphism $\eta\colon W\otimes_{\mathbb{Z}_{p}}%
\Lambda\rightarrow\underline{\Lambda}$ of tensor functors such that
$\eta_{B(\mathbb{F})}$ maps $\mu_{0}(p^{-1})\cdot\Lambda(\xi)_{W}$ onto
$F\underline{\Lambda}(\xi)$ for all $\xi$ in $\Rep_{\mathbb{Z}_{p}}(G)$.
\end{enumerate}
\end{definition}

\noindent In (b), $\Lambda$ denotes the forgetful functor $(\Lambda
,\xi)\rightsquigarrow\Lambda$ on $\Rep_{\mathbb{Z}_{p}}(G)$.

\begin{remark}
\label{a13}(a) Condition (a) means that $\underline{\Lambda}(\xi)\subset
M_{(p)}(\xi_{\mathbb{Q}_{p}})$ for all $\xi$, and that there is a commutative
diagram%
\[
\xymatrix{
\Rep_{\mathbb{Q}_{p}}(G)\ar[r]^{M(p)}
&\Mot(\mathbb{F})_{\left(  \mathbb{Q}_{p}\right)}\ar[r]^{\omega^{\mathbb{F}}_{p}}
&\Vc_{B(\mathbb{F})}\\
\Rep_{\mathbb{Z}_{p}}(G)\ar[u]_{\mathbb{Q}_p\otimes_{\mathbb{Z}_p}-}\ar[rr]^{\underline{\Lambda}}
&&\Mod_{W(\mathbb{F})}\ar[u]_{B\otimes_{W}-}.}
\]
(b) We define a \emph{filtered} $B(\mathbb{F})$\emph{-module\/} to be an
$F$-isocrystal $(N,F)$ over $\mathbb{F}$ together with a finite filtration
\[
N=\Filt^{i_{0}}(N)\supset\cdots\supset\Filt^{i}(N)\supset\Filt^{i+1}%
(N)\supset\cdots\supset\Filt^{i_{1}}(N)=0
\]
on $N$. In Fontaine's terminology, a $W$-lattice $\Lambda$ in $N$ is
\emph{strongly divisible\/} if
\[
\sum\nolimits_{i}p^{-i}F(\Filt^{i}N\cap\Lambda)=\Lambda,
\]
and a filtered $B(\mathbb{F})$-module admitting a strongly divisible lattice
is said to be \emph{weakly admissible }(\cite{fontaine1983}, p90)\emph{.\/} If
$\mu\colon\mathbb{G}_{m}\rightarrow\GL(\Lambda)$ splits the filtration on
$\Lambda$, i.e.,
\[
\Filt^{j}\Lambda=\bigoplus\nolimits_{i\geq j}\Lambda^{i},\quad\Lambda
^{i}\overset{\textup{{\tiny def}}}{=}\{m\in\Lambda\mid\mu(x)m=x^{i}%
m\text{\textrm{, all }}x\in B(\mathbb{F})^{\times}\},
\]
then the condition to be strongly divisible is that $F\Lambda=\mu(p)\Lambda$.
The cocharacter $\mu_{0}^{-1}$ of $G_{B(\mathbb{F})}$ constructed in
(\ref{a11}) defines a filtration on $\mathbb{Q}\otimes\Lambda(\xi)$ for all
$\xi$, and $\mu_{0}$ has been chosen so that $\mu_{0}^{-1}$ splits the
filtration on $\Lambda(\xi)$ for all $\xi$. Thus condition (b) for
$\underline{\Lambda}$ to be a $p$-integral structure on $M$ can be restated
as:\bquote there exists an isomorphism $\eta\colon W\otimes_{\mathbb{Z}_{p}%
}\Lambda\rightarrow\underline{\Lambda}$ of tensor functors such that, for all
$\xi\in\Rep_{\mathbb{Z}_{p}}(G)$, $\underline{\Lambda}(\xi)$ is strongly
divisible for the filtration on $B(\mathbb{F})\otimes_{W}\underline{\Lambda
}(\xi)$ defined (via $\eta$) by $\mu_{0}^{-1}$.\equote\ 
\end{remark}

\subsection{The set $\mathcal{L}(M)$}

Let $M$ be a $G$-motive satisfying the condition (\ref{eq4}), p\pageref{eq4}.
For a $p$-structure $\underline{\Lambda}$ on $M$, define $\Phi\underline
{\Lambda}$ to be the $p$-structure such that%
\[
(\Phi\underline{\Lambda})(\xi)=F^{[k(v):\mathbb{F}_{p}]}\cdot\underline
{\Lambda}(\xi),\quad\text{for all }\xi\in\Rep_{\mathbb{Z}_{p}}(G).
\]
Here $k(v)$ is the residue field at the prime $v$ of $E$ (so $[k(v)\colon
\mathbb{F}_{p}]=[E_{v}\colon\mathbb{Q}_{p}]$).

Define
\begin{align*}
I(M)  &  =\Aut^{\otimes}(M),\\
X^{p}(M)  &  =\Isom^{\otimes}(\mathbb{A}_{f}^{p}\otimes_{\mathbb{Q}}%
V,\omega_{f}^{p}\circ M),\\
X_{p}(M)  &  =\{p\text{-integral structures on }M\}.
\end{align*}
The group $I(M)$ acts on both $X^{p}(M)$ and $X_{p}(M)$ on the left, and so we
can define
\[
\mathcal{L}(M)=I(M)\left\backslash \left(  X^{p}(M)/Z^{p}\right)  \times
X_{p}(M)\right.  .
\]
Here $Z^{p}$ is the closure of $Z(\mathbb{Z}_{(p)})$ in $Z(\mathbb{A}_{f}%
^{p})$ where $Z=Z(G)$. The group $G(\mathbb{A}_{f}^{p})$ acts on $X^{p}(M)$
through its action on $\mathbb{A}_{f}^{p}\otimes_{\mathbb{Q}}V$, and we let it
act on $\mathcal{L}(M)$ through its action on $X^{p}(M)$. We let
$Z(\mathbb{Q}_{p})$ and $\Phi$ act $\mathcal{L}(M)$ through their actions on
$X_{p}(M)$.

An isomorphism $M\rightarrow M^{\prime}$ of $G$-motives defines an equivariant
bijection $a_{M^{\prime},M}\colon\mathcal{\mathcal{L}}(M)\rightarrow
\mathcal{\mathcal{L}}(M^{\prime})$ which is independent of the choice of the
isomorphism. For an equivalence class $m$ of $G$-motives, we define
\[
\mathcal{\mathcal{L}}(m)=\varprojlim_{M\in m}\mathcal{\mathcal{L}}(M).
\]
This is a set with actions of $G(\mathbb{A}_{f}^{p})\times Z(\mathbb{Q}_{p})$
and of a Frobenius operator $\Phi$; it is equipped with equivariant
isomorphisms $a_{M}\colon\mathcal{L}(m)\rightarrow\mathcal{L}(M)$ such that
$a_{M^{\prime},M}\circ a_{M}=a_{M^{\prime}}$ for all $M$, $M^{\prime}\in m$.

\subsection{Special $G$-motives}

Recall that $\Hdg_{\mathbb{Q}}$ is the category of polarizable Hodge
structures over $\mathbb{Q}$. A point $x$ of $X$ defines an exact tensor
functor
\[
H_{x}\colon\Rep_{\mathbb{Q}}(G)\rightarrow\Hdg_{\mathbb{Q}},\quad
\xi\rightsquigarrow(V(\xi),\xi_{\mathbb{R}}\circ h_{x}).
\]
When $x$ is special, $H_{x}$ takes values in the full subcategory of
$\Hdg_{\mathbb{Q}}$ whose objects are the rational Hodge structures of CM-type
(because we are assuming (SV4) and (SV6) --- see the notations). This
subcategory is equivalent (via $\omega_{B}$) to $\CM(\mathbb{Q}^{\mathrm{al}%
})$. Fix a tensor inverse $\Hdg_{\mathbb{Q}}\rightarrow\CM(\mathbb{Q}%
^{\mathrm{al}})$ to $\omega_{B}$. On composing $H_{x}$ with it, we obtain a
tensor functor $M_{x}\colon\Rep_{\mathbb{Q}}(G)\rightarrow\CM(\mathbb{Q}%
^{\mathrm{al}})$ together with an isomorphism $\omega_{B}\circ M_{x}\simeq
H_{x}$. Any $G$-motive equivalent to $R\circ M_{x}$ for some special $x\in X$
will be called \emph{special. }

\begin{lemma}
\label{a14} Every special $G$-motive $M\colon\Rep_{\mathbb{Q}}(G)\rightarrow
\Mot(\mathbb{F})$ satisfies the condition (\ref{eq4}), p\pageref{eq4}.
\end{lemma}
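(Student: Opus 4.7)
The plan is to unfold the definition of each realisation $\omega_l^{\mathbb{F}}$ and reduce the verification to a fibre-functor comparison on $\CM(\mathbb{Q}^{\mathrm{al}})$. Write $M=R\circ M_x$ for some special $x\in X$, so that $\omega_B\circ M_x\simeq H_x$ and hence (after forgetting the Hodge structure) the underlying $\mathbb{Q}$-vector space functor of $\omega_B\circ M_x$ is canonically the forgetful fibre functor $V$. The commutative diagram defining $\omega_l^{\mathbb{F}}$ gives
\[
\omega_l^{\mathbb{F}}\circ M=(\text{forget})\circ\zeta_l\circ R\circ M_x=(\text{forget})\circ\xi_l\circ M_x,
\]
so I only need to identify the underlying vector-space fibre functor of $\xi_l\circ M_x$ at each place $l$.

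At $l=\infty$, the underlying complex vector space of $\xi_\infty(X)$ is $\mathbb{C}\otimes_{\mathbb{Q}}\omega_B(X)$ by construction, and so $\omega_\infty^{\mathbb{F}}\circ M\simeq\mathbb{C}\otimes_{\mathbb{Q}}V$. For a prime $\ell\neq p,\infty$, the functor $\xi_\ell$ is $\ell$-adic étale cohomology on $\CM(\mathbb{Q}^{\mathrm{al}})$, and the Artin comparison theorem furnishes a tensor isomorphism $\omega_\ell\simeq\mathbb{Q}_\ell\otimes_{\mathbb{Q}}\omega_B$ of $\mathbb{Q}_\ell$-valued fibre functors; composing with $M_x$ gives $\omega_\ell^{\mathbb{F}}\circ M\simeq\mathbb{Q}_\ell\otimes_{\mathbb{Q}}V$.

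The substantive case is $l=p$, where no comparison theorem produces $B(\mathbb{F})\otimes_{\mathbb{Q}}V$ on the nose; this is where I expect the main obstacle, and I would handle it indirectly. Both $(\text{forget})\circ\xi_p$ and $B(\mathbb{F})\otimes_{\mathbb{Q}}\omega_B$ are exact $B(\mathbb{F})$-valued tensor functors on $\CM(\mathbb{Q}^{\mathrm{al}})$, i.e., $B(\mathbb{F})$-valued fibre functors. Restrict both to the finite-type tannakian subcategory $\mathsf{C}_x\subset\CM(\mathbb{Q}^{\mathrm{al}})$ generated by the image of $M_x$; its fundamental group is a torus $T$, being a finite-dimensional quotient of the Serre protorus $S$. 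The set of tensor isomorphisms between the two restricted fibre functors is a $T$-torsor over $B(\mathbb{F})$, classified by $H^1(B(\mathbb{F}),T)$. Since $B(\mathbb{F})$ is perfect of dimension $\leq 1$, Steinberg's theorem (the same input used in Proposition \ref{a1}) gives $H^1(B(\mathbb{F}),T)=0$, and the resulting trivialisation supplies the desired isomorphism $\omega_p^{\mathbb{F}}\circ M\simeq B(\mathbb{F})\otimes_{\mathbb{Q}}V$. The cases $l=\infty$ and $l=\ell$ are by contrast essentially formal unfoldings of the definitions of $\xi_l$.
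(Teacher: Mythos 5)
Your argument is correct, and in the one substantive case $l=p$ it takes a genuinely different route from the paper. The paper, after the same reduction to $M=R\circ M_{x}$, produces an explicit point of the torsor $\underline{\Isom}^{\otimes}(R\otimes_{\mathbb{Q}}V,R\otimes_{B(\mathbb{F})}M_{(p)})$ over the big field $(\mathbb{Q}^{\mathrm{al}})_{v}$ by chaining two comparison isomorphisms --- the crystalline/de Rham comparison $(\mathbb{Q}^{\mathrm{al}})_{v}\otimes_{B(\mathbb{F})}(\omega_{p}^{\mathbb{F}}\circ R)\simeq(\mathbb{Q}^{\mathrm{al}})_{v}\otimes_{\mathbb{Q}^{\mathrm{al}}}\omega_{\mathrm{dR}}$ and the de Rham/Betti comparison --- and only then uses Steinberg's theorem, for the full connected group $G$ over the dimension $\leq1$ field $B(\mathbb{F})$, to descend to $B(\mathbb{F})$. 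You instead never produce a point: you work upstream on the tannakian subcategory $\mathsf{C}_{x}\subset\CM(\mathbb{Q}^{\mathrm{al}})$ generated by the image of $M_{x}$, observe that both $(\text{forget})\circ\xi_{p}$ and $B(\mathbb{F})\otimes_{\mathbb{Q}}\omega_{B}$ restrict to $B(\mathbb{F})$-valued fibre functors there, and invoke the general tannakian theorem (Deligne) that any two fibre functors are fpqc-locally isomorphic, so that the Isom scheme is automatically a torsor under the Mumford--Tate torus $T$; then $H^{1}(B(\mathbb{F}),T)=0$ finishes, and composing with $M_{x}$ and $\omega_{B}\circ M_{x}\simeq V$ transports the trivialization to $\Rep_{\mathbb{Q}}(G)$. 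What each approach buys: the paper's is more explicit and needs only the torsor to have a point over some extension (no appeal to the local-isomorphism theorem), at the cost of invoking the $p$-adic comparison isomorphisms for CM abelian varieties with good reduction; yours avoids any comparison theorem at $p$ and only needs cohomological vanishing for a torus rather than for $G$, but it leans on Deligne's theorem that fibre functors are locally isomorphic, which is itself a substantial input. Two small points you pass over quickly but should record: the condition (\ref{eq4}) is invariant under equivalence of tensor functors, which is what licenses the reduction to $M=R\circ M_{x}$ exactly, and faithfulness/exactness of $(\text{forget})\circ\xi_{p}$ on $\mathsf{C}_{x}$ (so that it really is a fibre functor) follows since tensor functors between rigid abelian tensor categories over a field are automatically faithful. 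Neither is a gap.
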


\begin{proof}
We consider only the case $l=p$ since the other cases are easier. As the
statement depends only on the isomorphism class of $M$, we may assume that
$M=R\circ M_{x}$ with $x$ a special point of $X$.

The reduction functor $R\colon\CM(\mathbb{Q}^{\mathrm{al}})\rightarrow
\Mot(\mathbb{F})$ has the property that
\[
(\mathbb{Q}^{\mathrm{al}})_{v}\otimes_{B(\mathbb{F})}(\omega_{p}^{\mathbb{F}%
}\circ R)\simeq(\mathbb{Q}^{\mathrm{al}})_{v}\otimes_{\mathbb{Q}^{\mathrm{al}%
}}\omega_{\mathrm{dR}}%
\]
where $\omega_{\mathrm{dR}}$ denotes the de Rham fibre functor on
$\CM(\mathbb{Q}^{\text{al}})$. On composing both sides with $M_{x}$, we find
that
\[
(\mathbb{Q}^{\mathrm{al}})_{v}\otimes_{B(\mathbb{F})}(\omega_{p}^{\mathbb{F}%
}\circ M)\simeq(\mathbb{Q}^{\mathrm{al}})_{v}\otimes_{\mathbb{Q}^{\mathrm{al}%
}}(\omega_{\mathrm{dR}}\circ M_{x}).
\]
There is a comparison isomorphism
\[
\omega_{\mathrm{dR}}\rightarrow\mathbb{Q}^{\mathrm{al}}\otimes_{\mathbb{Q}%
}\omega_{B},
\]
and the definition of $M_{x}$ gives an isomorphism
\[
\omega_{B}\circ M_{x}\rightarrow V.
\]
On combining these isomorphisms, we obtain an isomorphism of tensor functors
\[
(\mathbb{Q}^{\mathrm{al}})_{v}\otimes_{B(\mathbb{F})}M_{(p)}\rightarrow
(\mathbb{Q}^{\mathrm{al}})_{v}\otimes_{\mathbb{Q}}V.
\]
It remains to show that we can replace $(\mathbb{Q}^{\mathrm{al}})_{v}$ with
$B(\mathbb{F})$ in this statement.

Consider the functor of $B(\mathbb{F})$-algebras
\[
R\rightsquigarrow\underline{\Isom}^{\otimes}(R\otimes_{\mathbb{Q}}%
V,R\otimes_{B(\mathbb{F})}M_{(p)}).
\]
This is a pseudo-torsor for $\underline{\Aut}^{\otimes}(B(\mathbb{F}%
)\otimes_{\mathbb{Q}}V)=G_{B(\mathbb{F})}$ and, in fact, a torsor because it
has a $(\mathbb{Q}^{\mathrm{al}})_{v}$-point. It therefore defines an element
of $H^{1}(B(\mathbb{F}),G)$. As the field $B(\mathbb{F})$ has dimension
$\leq1$ and $G$ is connected, $H^{1}(B(\mathbb{F}),G)=0$ (\cite{steinberg1965}%
, 1.9), and so the torsor is trivial.
\end{proof}

In particular, when $M$ is special, the set $\mathcal{L}(M)$ is defined.

\subsection{Shimura varieties of dimension zero}

By a \emph{zero-dimensional Shimura }$p$\emph{-datum}, we mean a pair $(T,X)$
in which $T$ is a torus over $\mathbb{Z}{}_{(p)}$ and $X$ is a finite set of
homomorphisms $\mathbb{S}{}\rightarrow T_{\mathbb{R}{}}$ on which
$T(\mathbb{R}{})/T(\mathbb{R}{})^{+}$ acts transitively. Consistent with our
standing assumptions for Shimura data, we require (except in \S 6) that the
weights of the elements of $X$ are defined over $\mathbb{Q}{}$ and that $T$
splits over $\mathbb{Q}{}^{\mathrm{cm}}$. Then, as in the preceding
subsection, each element $x$ of $X$ defines a $T$-motive $M_{x}$ satisfying
the condition (\ref{eq4}), p\pageref{eq4}, to which we can attach a set
$\mathcal{L}{}(M_{x})$ with an action of $T(\mathbb{A}{}_{f})$ and of $\Phi$.

The zero-dimensional Shimura variety attached to $(T,X)$ is as defined in
\cite{milne2005}, \S 5. Every Shimura $p$-datum $(G,X)$ defines a
zero-dimensional Shimura $p$-datum $(G^{\mathrm{ad}},X^{\mathrm{ad}})$, and
$\Sh_{p}(G^{\mathrm{ad}},X^{\mathrm{ad}})(\mathbb{C}{})=\pi_{0}(\Sh_{p}(G,X))$
when $G^{\mathrm{der}}$ is simply connected (ibid.).

\textit{From now on \textquotedblleft Shimura }$p$%
\textit{-datum\textquotedblright\ will mean either a Shimura }$p$%
\textit{-datum, as defined in the Introduction, or a zero-dimensional Shimura
}$p$\textit{-datum as just defined.}

\subsection{Statement of Conjecture LR+}

Define%
\begin{equation}
\mathcal{L}(G,X)=\bigsqcup_{m}\mathcal{L}(m) \label{e1}%
\end{equation}
where $m$ runs over the set of equivalence classes of special $G$-motives.
Then $\mathcal{L}(G,X)$ is a set with an action of $G(\mathbb{A}_{f}%
^{p})\times Z(\mathbb{Q}_{p})\times\{\Phi\}$, and
\[
(G,X)\rightsquigarrow\mathcal{L}(G,X)
\]
is a functor from the category of Shimura $p$-data to the category of sets
with a Frobenius operator.

Let $E_{v}$ be the closure of $E$ in $(\mathbb{Q}^{\mathrm{al}})_{v}$ and let
$\mathcal{O}_{v}$ be its ring of integers. Let $\Sh_{p}(G,X)$ be the canonical
integral model\label{cim} over $\mathcal{O}_{v}$ (in the sense of
\cite{milne1992})\footnote{Because of an error in \cite{faltingsC1990}, V 6.8,
the definition in \cite{milne1992} needs to be slightly modified --- see
\cite{milne1994s}, p513, and \cite{moonen1998}, \S 3.} of the Shimura variety
with complex points%
\[
G(\mathbb{Q})\backslash\left(  X\times G(\mathbb{A}_{f}^{p})\times
G(\mathbb{Q}_{p})/G(\mathbb{Z}_{p})\right)  .
\]
From its definition, $\Sh_{p}$ is uniquely determined, and it is known to
exist except possibly for $p=2$ (\cite{vasiu1999, vasiu2008, vasiu2008a},
\cite{kisin2007, kisin2009}). Write $\Sh_{p}(\mathbb{F})$ for the functor%
\[
(G,X)\rightsquigarrow\Sh_{p}(G,X)(\mathbb{F})
\]
from the category of Shimura $p$-data to the category of sets with a Frobenius operator.

\begin{lrconjecture}
\label{a15}There exists a canonical isomorphism of functors
\[
\mathcal{\mathcal{L}}\rightarrow\Sh_{p}(\mathbb{F})
\]
such that, for each Shimura $p$-datum $(G,X)$, the isomorphism%
\[
\mathcal{L}(G,X)\rightarrow\Sh_{p}(G,X)(\mathbb{F})
\]
is equivariant for the actions of $G(\mathbb{A}_{f}^{p})$ and $Z(\mathbb{Q}%
_{p}$).
\end{lrconjecture}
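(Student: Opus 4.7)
The plan is to first treat the case where $(G,X)$ is of Hodge type (i.e.\ admits an embedding into a Siegel $p$-datum $(\GSp(\psi),S^{\pm})$), and then to bootstrap to Shimura varieties of abelian type via connected Shimura varieties, and finally to address functoriality. Throughout we shall use the integral comparison conjecture (to match $p$-adic \'etale tensors with crystalline tensors on abelian varieties with good reduction) and the special-points conjecture (to lift $\mathbb{F}$-points to special points in characteristic zero up to isogeny) as black boxes.

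The construction of the map $\mathcal{L}(G,X)\to\Sh_p(G,X)(\mathbb{F})$ in the Hodge-type case would proceed as follows. Fix an embedding $(G,X)\hookrightarrow(\GSp(\psi),S^{\pm})$, and fix a defining pair $(\xi_0,(t_i)_{i\in I})$ for $G\subset\GL(\Lambda)$ in the sense of Section~1, so that a $G$-motive amounts to an abelian motive equipped with a family of Hodge tensors. Start with a special $G$-motive $M=R\circ M_x$ arising from a special point $x\in X$: the CM-motive $M_x$ reduces to an abelian variety $A_0/\mathbb{F}$ with CM-structure, together with the reduced tensors $R(M_x(t_i))\in\mathcal{B}_{\omega_0}(A_0)$. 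A pair $(\eta^p,\underline{\Lambda})\in X^p(M)\times X_p(M)$ gives simultaneously a prime-to-$p$ level structure on $A_0$ respecting the $t_i$ and (via Proposition~\ref{a2} and Remark~\ref{a13}(b)) a strongly-divisible $W(\mathbb{F})$-lattice with the correct filtration type $\mu_0^{-1}$; Fontaine-type weak admissibility then lets us interpret this lattice as the Dieudonn\'e module of a $p$-divisible group quasi-isogenous to $A_0[p^\infty]$, yielding an abelian variety $A/\mathbb{F}$ together with the required Hodge tensors and level structures. This abelian variety, together with the family $(t_i)$ and the level structures, is exactly the datum parametrized by the moduli interpretation of $\Sh_p(G,X)$ at $\mathbb{F}$, and hence defines a point of $\Sh_p(G,X)(\mathbb{F})$ after quotienting by $I(M)$ and $Z^p$.

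Next one would verify that the map so constructed is independent of choices, is $G(\mathbb{A}_f^p)\times Z(\mathbb{Q}_p)\times\{\Phi\}$-equivariant, and factors to a well-defined map on each $\mathcal{L}(m)$ in the disjoint union (\ref{e1}). Injectivity on each $\mathcal{L}(m)$ follows from the fact that two abelian varieties with the same level structures at all $\ell$ (prime-to-$p$) \emph{and} the same integral $p$-adic Dieudonn\'e data at $p$ are equal in the canonical integral model. The use of a canonical integral model in the sense of \cite{milne1992} is essential here: without the extension property pinning down $\Sh_p$, the map would not even be well defined. Surjectivity is the point at which the special-points conjecture enters: given $x_0\in\Sh_p(G,X)(\mathbb{F})$, lift it (up to isogeny) to a special point $\tilde x$ of the generic fibre, take the associated special $G$-motive $M=R\circ M_{\tilde x}$, and extract $(\eta^p,\underline{\Lambda})\in X^p(M)\times X_p(M)$ from the Betti/\'etale and crystalline realizations of the lifted abelian variety; the integral comparison conjecture guarantees that $\underline{\Lambda}$ is a genuine $p$-integral structure (not merely a rational one).

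From Hodge type to abelian type one uses the standard trick with connected Shimura varieties: writing $(G^{\mathrm{ad}},X^{\mathrm{ad}})$ as a product of simple factors, each factor is covered by some Hodge-type Shimura datum, and one has $\Sh_p(G,X)=\Sh_p(G^{\mathrm{ad}},X^{\mathrm{ad}})\times^{\pi_0}\Sh_p^\circ$. The refinement LRp (respecting connected components), which follows from LR+ by construction since $\mathcal{L}$ is defined functorially from $\Mot(\mathbb{F})$, is exactly what permits gluing. The final functoriality in $(G,X)$ is then built in: both sides of the isomorphism are defined as functors on the category of Shimura $p$-data, and the construction above is manifestly natural in morphisms of Shimura $p$-data once a fibre functor $\omega_0$ has been fixed on $\CM(\mathbb{Q}^{\mathrm{al}})^P$.

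The main obstacle, in my view, is twofold and concentrates at $p$. First, establishing that $\underline{\Lambda}$ defined via the $p$-adic \'etale lattice on the generic fibre is genuinely strongly divisible for the filtration induced by $\mu_0^{-1}$ is precisely the content of the integral comparison conjecture, and its fully general form (for all Hodge classes, not only those cut out by PEL structures) is delicate; I would expect to have to invoke the announced results of Vasiu and Kisin rather than prove this from scratch. Second, the surjectivity step requires the special-points conjecture: \emph{a priori} a random $\mathbb{F}$-point of the integral model might not lift to any special point of characteristic zero, and without such a lift there is no CM-motive to serve as the source of an element of $\mathcal{L}(G,X)$. Everything else — the Tannakian manipulations on $G$-motives, the verification of equivariance, and the passage from Hodge to abelian type — is in principle routine once these two inputs are in hand.
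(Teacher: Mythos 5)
The statement you are trying to prove is a \emph{conjecture} in the paper (Conjecture LR+), not a theorem: the paper proves it only for Shimura $p$-data of Hodge type under extra hypotheses (injectivity of $\Sh_{p}(\mathbb{F})\rightarrow\Sh_{p}^{\prime}(\mathbb{F})$ for a Siegel embedding, plus the special-points conjecture), and unconditionally only for PEL type via Zink's theorem. Your plan, even granting the integral comparison and special-points conjectures as black boxes, does not close the remaining gaps. First, your central step --- interpreting a special $G$-motive over $\mathbb{F}$ with a $p$-integral structure as ``an abelian variety $A/\mathbb{F}$ with Hodge tensors and level structure'' and matching it against ``the moduli interpretation of $\Sh_{p}(G,X)$ at $\mathbb{F}$'' --- is not available. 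The category $\Mot(\mathbb{F})$ is constructed as a quotient of $\CM(\mathbb{Q}^{\mathrm{al}})$ and is \emph{not} known to contain the abelian varieties over $\mathbb{F}$, nor are the classes in $\mathcal{B}_{\omega_{0}}$ known to be algebraic; passing from a motive in $\Mot(\mathbb{F})$ to an actual abelian variety over $\mathbb{F}$ with tensors would essentially require the rationality conjecture (this is exactly the footnoted obstruction at the start of Section 5). Moreover, for Hodge type beyond PEL there is no moduli description of $\Sh_{p}$ over $\mathbb{F}$ in terms of tensors. The paper therefore runs the comparison in the opposite direction: it constructs $\mathcal{M}(W(\mathbb{F}))\rightarrow\mathcal{L}$ (Proposition \ref{a32}), proves the Siegel case by the Hasse-principle lemmas \ref{a37}--\ref{a38}, Serre--Tate deformation theory and Zink's theorem (Theorem \ref{a39}), and then descends to Hodge type via closed immersions (Proposition \ref{a40}, Theorem \ref{a44}); injectivity there needs the unproven property that the canonical integral model injects into the Siegel one on $\mathbb{F}$-points.

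Second, your claim that the passage from Hodge type to abelian type is ``routine'' contradicts the paper's own assessment: the main obstacle it identifies is that Theorem \ref{a44} requires the \emph{$p$-datum} to be of Hodge type (a Siegel embedding carrying $G(\mathbb{Z}_{p})$ into a hyperspecial subgroup), whereas the bootstrapping argument needs the statement for all $(G,X)$ with $(G_{\mathbb{Q}},X)$ of Hodge type; this is an open problem, not a formality. Finally, Conjecture LR+ is stated for all Shimura $p$-data satisfying (SV1--4,6), not only those of abelian type; for non-abelian-type varieties there is no motivic moduli description even conjecturally, and the paper envisages an entirely different strategy (constructing an $\mathbb{F}$-scheme $L$ with $L(\mathbb{F})=\mathcal{L}(G,X)$ and using the methods of Milne and Borovoi starting from type $A_{n}$). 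So your proposal is best read as a sketch of the paper's partial results for Hodge type, not as a proof of the conjecture itself.
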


In symbols:%
\[
\renewcommand{\arraystretch}{1.5}%
\begin{array}
[c]{ccccc}%
(G,X) &  & \mathcal{L}(G,X) & \overset{1:1}{\leftrightarrow} & \Sh_{p}%
(G,X)(\mathbb{F})\\
\downarrow & \rightsquigarrow & \downarrow & \Box & \downarrow\\
(G^{\prime},X^{\prime}) &  & \mathcal{L}(G^{\prime},X^{\prime}) &
\overset{1:1}{\leftrightarrow} & \Sh_{p}(G^{\prime},X^{\prime})(\mathbb{F})
\end{array}
\]

In fact, one can show that, for Shimura varieties of abelian type, there
exists at most one isomorphism of functors $\mathcal{L}{}\rightarrow
\Sh_{p}(\mathbb{F}{})$ taking a specific value on Siegel varieties and
zero-dimensional Shimura varieties.

Since $\mathcal{L}(G,X)$ includes terms corresponding only to special
homomorphisms, Conjecture \ref{a15} forces the following conjecture.

\begin{spconjecture}
\label{a16}Up to isogeny, every point on $\Sh_{p}(G,X)$ with coordinates in
$\mathbb{F}$ lifts to a special point on $\Sh_{p}(G,X)$ with coordinates in a
finite extension of $B(\mathbb{F})$.
\end{spconjecture}

Recall that, from the definition of the canonical integral model and Hensel's
lemma,%
\[
\Sh_{p}(B(\mathbb{F}))\overset{1:1}{\leftrightarrow}\Sh_{p}(W(\mathbb{F}%
))\overset{\text{onto}}{\longrightarrow}\Sh_{p}(\mathbb{F}).
\]
The special-points conjecture is proved in \cite{zink1983}, 2.7, for simple
abelian varieties of PEL-type;\footnote{Let $L$ be a simple finite-dimensional
algebra over $\mathbb{Q}{}$ with a positive involution $x\mapsto x^{\ast}$,
and let $p$ be a prime number such that
\par
\begin{itemize}
\item $L\otimes\mathbb{Q}_{p}$ is a product of matrix algebras, \
\par
\item $p$ is unramified in the centre $Z$ of $L$, and
\par
\item $(\mathcal{O}_{L}\otimes\mathbb{Z}_{p})^{\ast}=(\mathcal{O}_{L}%
\otimes\mathbb{Z}_{p})$.
\end{itemize}
\par
\noindent Let $k$ be a field of characteristic $p$, and let $(A,\lambda)$ be a
polarized abelian $L$-variety over $k$ such that
\[
\Tr_{\mathbb{C}}(l\mid\Lie(A))=\sum_{\rho:Z\rightarrow\mathbb{C}}r_{\rho}%
\cdot\rho(\mathrm{Trd}(l)),\quad l\in L,\rlap{\hspace{1in}(*)}
\]
for some fixed integers $r_{\rho}$. Assume that the degree of $\lambda$ is
prime to $p$. Then Zink proves the following:\bquote Let $R$ be a product of
CM-fields and let $\theta_{A}\colon R\rightarrow\End_{L}^{0}(A)$ be a
homomorphism such that $\theta_{A}(R)$ is stable under the Rosati involution
and
\[
\dim_{\mathbb{Q}{}}(R)=2\dim A/[L\colon Z]^{1/2}%
\]
(that is, $A$ admits complex multiplication $(R,\theta_{A})$ relative to $L$).
Then there exists a discrete valuation ring $\mathcal{O}$ that is a finite
extension of $W(k)$ and a polarized abelian $\mathcal{O}_{L}$-variety
$(\tilde{A},\tilde{\lambda})$ with complex multiplication $(R,\theta
_{\tilde{A}})$ over $\mathcal{O}$ satisfying (*) whose reduction is isogenous
to $(A,\lambda,R,\theta_{A})$.\equote}\thinspace\ \footnote{According to Zink
(1983, p103), in this case the result was originally stated in a letter from
Langlands to Rapoport, but the proof there, which is based on the methods of
Grothendieck and Messing, is incorrect. (Dieses Resultat wurde in einem Brief
von Langlands an Rapoport behauptet. Der Beweis dort, der auf Methoden von
Grothendieck und Messing beruht ist aber fehlerhaft.)} more general results
have been announced by \citet{vasiu2003cm}. It should be noted that the
special-points conjecture is false in the case of bad reduction
(\cite{langlandsR1987}).

\subsection{Example: the Shimura variety of $\mathbb{G}_{m}$}

In order to check our signs, we prove the conjecture for a Shimura $p$-datum
$(G,X)$ with $G\ $a one-dimensional split torus over $\mathbb{Z}{}_{(p)}$. Any
such Shimura $p$-datum is of the form $(\mathbb{G}_{m},\{h_{n}\})$ where
$h_{n}$ is the $n$th power of the norm map $\mathbb{S}\rightarrow
\mathbb{G}_{m\mathbb{R}}$; thus $h_{n}(z)=(z\bar{z})^{n}$ for $z\in\mathbb{C}%
$. The cocharacter $\mu_{n}$ of $\mathbb{G}_{m}$ attached to $h_{n}$ is
$z\mapsto z^{n}$.

The maps%
\[
\Sh_{p}(\mathbb{C})\leftarrow\Sh_{p}(\mathbb{Q}^{\mathrm{al}})\rightarrow
\Sh_{p}((\mathbb{Q}^{\mathrm{al}})_{v})\leftarrow\Sh_{p}(B(\mathbb{F}%
))\leftarrow\Sh_{p}(W(\mathbb{F}))\rightarrow\Sh_{p}(\mathbb{F})
\]
are bijective because $\Sh_{p}$ is of dimension zero and pro-\'{e}tale over
$\mathbb{Z}_{p}$. Therefore%
\[
\Sh_{p}(\mathbb{F})\simeq\mathbb{Q}^{\times}\backslash\mathbb{A}_{f}^{p}%
\times\mathbb{Q}_{p}^{\times}/\mathbb{Z}_{p}^{\times}.
\]
The Frobenius automorphism $x\mapsto x^{p}$ of $\mathbb{F}$ acts on this as
multiplication by $p^{-n}$ on $\mathbb{Q}_{p}^{\times}$ (see \cite{milne1992}, \S 1).

Let $\xi$ denote a one-dimensional representation of $\mathbb{G}_{m}$ with
character $x\mapsto x$. Up to equivalence, the only special $\mathbb{G}_{m}%
$-motive $M$ over $\mathbb{F}$ is that with $M(\xi)=\1(n)$, the $n$th tensor
power of the Tate motive. We have
\begin{align*}
I(M)  &  =\Aut(\1(n))\simeq\mathbb{Q}^{\times},\\
X^{p}(M)  &  =\Hom(\mathbb{A}_{f}^{p}\otimes V(\xi),\mathbb{A}_{f}^{p}(n)),
\end{align*}
and $X_{p}(M)$ is equal to the set of lattices $\Lambda$ in $\omega
_{p}(\1(n))$ that are strongly divisible for the filtration defined by
$\mu_{n}^{-1}$. The Frobenius operator $\Phi$ acts by sending $\Lambda$ to
$F\Lambda$.

As an $\mathbb{A}_{f}^{p}$-module, $\mathbb{A}_{f}^{p}(n)=\mathbb{A}_{f}^{p}$,
and so the choice of a basis element for $V(\xi)$ determines a bijection
$X^{p}(M)\rightarrow\mathbb{A}_{f}^{p}$.

The isocrystal $\omega_{p}(\1(n))$ is $B(\mathbb{F})$ with $F$ acting as
$p^{-n}\sigma$. Every strongly divisible lattice $\Lambda$ in $\omega
_{p}(\1(n))$ arises from a lattice $\Lambda_{0}$ in $\mathbb{Q}_{p}$
(cf.\ \cite{wintenberger1984}, 4.2.5(1)). Specifically, to say that $\Lambda$
is strongly divisible means that $\mu_{n}(p^{-1})\Lambda=F\Lambda$, i.e.,
$p^{-n}\Lambda=p^{-n}\sigma\Lambda$; therefore $\Lambda=\sigma\Lambda$, and so
$\Lambda^{\sigma=1}$ is a lattice in $\mathbb{Q}_{p}$. Let $\Lambda^{\sigma
=1}=a(\Lambda)\cdot\mathbb{Z}_{p}$ with $a(\Lambda)\in\mathbb{Q}_{p}$; then
$\Lambda\mapsto a(\Lambda)$ defines a bijection $X_{p}(M)\rightarrow
\mathbb{Q}_{p}/\mathbb{Z}_{p}$ under which the Frobenius maps correspond.

This completes the proof of Conjecture \ref{a15} for $(\mathbb{G}_{m}%
,\{h_{n}\})$.

\subsection{A criterion for a $G$-motive to be special}

We begin by reviewing some constructions.

\begin{plain}
\label{a17}A point $x$ of $X$ defines an exact tensor functor
\[
\xi\rightsquigarrow(V(\xi)_{\mathbb{R}},\xi_{\mathbb{R}}\circ h_{x}%
)\colon\Rep_{\mathbb{Q}}(G)\rightarrow\Hdg_{\mathbb{R}},
\]
and hence a $G$-object $N_{x}$ of $\mathsf{R}_{\infty}$. If $x^{\prime}=gx$
with $g\in G(\mathbb{R})$, then $g$ defines an isomorphism $N_{x}\rightarrow
N_{x^{\prime}}$, and so the equivalence class of $N_{x}$ depends only on $X$.
\end{plain}

\begin{plain}
\label{a18}For any torus $T$ split by $\mathbb{Q}^{\mathrm{cm}}$ and
cocharacter $\mu$ with weight $-(\iota+1)\mu$ defined over $\mathbb{Q}$, there
is a unique homomorphism $S\rightarrow T$ sending $\mu^{S}$ to $\mu$
(universal property of $(S,\mu^{S})$). This homomorphism defines a functor
$\Rep(T)\rightarrow\Rep(S)\simeq\CM(\mathbb{Q}^{\text{al}})$,i.e., a
$T$-object $N(T,\mu)$ of $\CM(\mathbb{Q}^{\text{al}})$. For example, from
$(G,X)$, we get a $G^{\mathrm{ab}}$-object $N(G^{\mathrm{ab}},\mu_{X})$ in
$\CM(\mathbb{Q}^{\text{\textrm{al}}})$ where $\mu_{X}$ is the common composite
of $\mu_{x}$ with $G\rightarrow G^{\mathrm{ab}}$ for $x\in X$.
\end{plain}

\begin{proposition}
\label{a19}When $G^{\mathrm{der}}$ is simply connected, a $G$-motive
$M\colon\Rep_{\mathbb{Q}}(G)\rightarrow\Mot(\mathbb{F})$ is special if and
only if it satisfies the following conditions:

\begin{enumerate}
\item the $G_{\mathbb{R}}$-object $M_{(\infty)}$ is equivalent to $N_{x}$
($x\in X$);

\item for all $\ell\neq p,\infty$, the $G_{\mathbb{Q}_{\ell}}$-object
$M_{(\ell)}$ is equivalent to $\mathbb{Q}_{\ell}\otimes_{\mathbb{Q}}V$ (here
$V$ is the forgetful functor $\xi\mapsto V(\xi)$);

\item the tensor functors $M_{(p)}$ and $B(\mathbb{F})\otimes_{\mathbb{Q}}V$
are equivalent, and there exists a $p$-integral structure on $M$;

\item the $G^{\mathrm{ab}}$ object in $\Mot(\mathbb{F)}$ obtained from $M$ by
restriction is equivalent to $N(G^{\mathrm{ab}},\mu_{X})$.
\end{enumerate}
\end{proposition}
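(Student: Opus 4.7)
Suppose $M = R \circ M_x$ for a special $x \in X$. Condition (a) holds by construction, since $\omega_{B} \circ M_{x} \simeq H_{x}$ and $\omega_{\infty}^{\mathbb{F}} \circ R$ factors through the complex Hodge realization on $\CM(\mathbb{Q}^{\mathrm{al}})$. For $\ell \neq p, \infty$, the composite $\omega_{\ell}^{\mathbb{F}} \circ R \circ M_{x}$ is the $\ell$-adic étale cohomology of the relevant CM abelian varieties, canonically isomorphic to $\mathbb{Q}_{\ell} \otimes_{\mathbb{Q}} V$, giving (b). Condition (c) is the analogous statement at $p$, with the required $p$-integral structure supplied by the integral crystalline cohomology of the good reduction; strong divisibility for $\mu_{0}^{-1}$ holds because $\mu_{0}$ lies in the $G(B(\mathbb{F}))$-conjugacy class of $\mu_{x}$. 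Finally, (d) follows from the universal property of $(S,\mu^{S})$ recalled in (\ref{a18}), which identifies the restriction of $M_{x}$ to $\Rep_{\mathbb{Q}}(G^{\mathrm{ab}})$ with $N(G^{\mathrm{ab}}, \mu_{X})$.

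\textbf{Sufficiency.} Assume (a)--(d). Pick any special $x \in X$ and set $M' := R \circ M_{x}$; by necessity, $M'$ again satisfies (a)--(d) with the same equivalence class $[N_{x}]$ in (a). It remains to prove $M \approx M'$. Consider the $\mathbb{Q}$-scheme
\[
T := \underline{\Isom}^{\otimes}(M', M),
\]
a torsor under $H := \underline{\Aut}^{\otimes}(M')$. Choosing a representing homomorphism $\rho \colon P \rightarrow G$ for $M'$ (possible after fixing $\omega_{0}$) identifies $H$ with a $\mathbb{Q}$-form of the centralizer of $\rho(P)$ in $G$, a Levi subgroup of $G$; in particular $H$ is reductive with simply connected derived group. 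Conditions (a), (b), (c) provide local trivializations of $T$ at every place of $\mathbb{Q}$, via Proposition \ref{a1} (and Proposition \ref{a2} at $p$, which is needed to promote the $\mathbb{Q}_{p}$-trivialization underlying $M_{(p)} \approx B(\mathbb{F}) \otimes V$ to an integral trivialization compatible with the $p$-integral structures on both sides). Condition (d) makes the image of $[T]$ in $H^{1}(\mathbb{Q}, H^{\mathrm{ab}})$ vanish.

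\textbf{Conclusion and main obstacle.} The Hasse principle of Kneser--Harder--Chernousov, applied to the reductive $\mathbb{Q}$-group $H$ with simply connected derived group, gives injectivity of
\[
H^{1}(\mathbb{Q}, H) \longrightarrow H^{1}(\mathbb{Q}, H^{\mathrm{ab}}) \times \prod\nolimits_{v} H^{1}(\mathbb{Q}_{v}, H).
\]
Combining the local triviality from (a)--(c) with the triviality of the abelian projection from (d), the torsor $T$ is globally trivial; a $\mathbb{Q}$-point of $T$ is an equivalence $M \approx M'$, proving $M$ special. The main obstacle I anticipate is the integral step at $p$: condition (c) asserts existence of a $p$-integral structure on $M$ and, via necessity, on $M'$, and one must match them integrally rather than merely generically. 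Proposition \ref{a2} is the key tool here, provided one chooses defining tensors for $G$ over $\mathbb{Z}_{p}$; the matching of the two integral structures then reduces to an isomorphism of $W(\mathbb{F})$-modules with tensors, i.e.\ to the triviality of an fppf torsor under $H$ over $W(\mathbb{F})$, which follows from Steinberg's vanishing $H^{1}(\mathbb{F}, H_{\mathbb{F}}) = 0$ thanks to $\mathbb{F}$ having dimension $\leq 1$. A secondary subtlety, which genuinely uses the hypothesis that $G^{\mathrm{der}}$ is simply connected, is the correct identification of $H^{\mathrm{ab}}$ with the abelian quotient controlled by (d), so that condition (d) really kills the obstruction the Hasse principle leaves behind.
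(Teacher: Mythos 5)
There is a genuine gap in the sufficiency direction, and it occurs at the very first step. You write: ``Pick any special $x\in X$ and set $M':=R\circ M_{x}$; \ldots{} it remains to prove $M\approx M'$.'' This cannot be right: by the necessity direction, \emph{every} special $G$-motive satisfies (a)--(d), yet distinct special points give rise to inequivalent special $G$-motives in general --- that is precisely why $\mathcal{L}(G,X)$ in (\ref{e1}) is a disjoint union over many equivalence classes $m$. Taking $M$ itself to be a special $G$-motive inequivalent to your chosen $M'$ refutes the claim you set out to prove. Concretely, $\underline{\Isom}^{\otimes}(M',M)$ is only a pseudo-torsor; it is a torsor under $\underline{\Aut}^{\otimes}(M')$ only if $M$ and $M'$ become isomorphic after extension of coefficients, which already forces the induced homomorphisms $P\rightarrow G$ on fundamental groups (the Frobenius data) to be conjugate --- false for a generic choice of $x$. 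The real content of the proposition, and of \cite{langlandsR1987}, 5.3, to which the paper defers, is the construction of a special point \emph{adapted to} $M$: one must show from (a), (c), (d) that the homomorphism $P\rightarrow G$ underlying $M$ factors, up to conjugacy, through a maximal torus of $G$ defined over $\mathbb{Q}$ coming from a special $x\in X$. Your sketch omits this step entirely and jumps to the comparatively routine cohomological endgame.

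A second, related problem: conditions (a)--(c) do not provide local trivializations of your $T$. They assert isomorphisms of the \emph{realizations} $\omega_{l}^{\mathbb{F}}\circ M$ with standard fibre functors, not isomorphisms $M_{(\mathbb{Q}_{l})}\rightarrow M'_{(\mathbb{Q}_{l})}$ of tensor functors into $\Mot(\mathbb{F})_{(\mathbb{Q}_{l})}$; the functors $\zeta_{l}$ forget the Frobenius structure, so two $G$-motives can satisfy (a)--(c) without being locally isomorphic in $\Mot(\mathbb{F})_{(\mathbb{Q}_{l})}$. What is sound in your proposal is the necessity direction and the final mechanism --- the automorphism group of a special $G$-motive is a twisted centralizer of the image of $P$, a Levi-type subgroup whose derived group is simply connected when $G^{\mathrm{der}}$ is, so the Hasse principle applies --- but that mechanism only closes the argument once the correct $M'$ has been produced and genuine local isomorphisms in $\Mot(\mathbb{F})_{(\mathbb{Q}_{l})}$ have been established.
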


\begin{proof}
The proof is similar to that of \cite{langlandsR1987}, 5.3, p173.
\end{proof}

\subsection{Re-statement of the conjecture in terms of motives with
$\mathbb{Z}{}_{(p)}$-coefficients}

By a $G$\emph{-motive over} $\mathbb{F}$ \emph{with coefficients in}
$\mathbb{Z}_{(p)}$, we mean an exact tensor functor
\[
M\colon\Rep(G)\rightarrow\Mot_{(p)}(\mathbb{F}),\quad\xi\rightsquigarrow
(M_{p}(\xi),M_{0}(\xi),m(\xi)).
\]
We say that $M$ is \emph{admissible} if the image of $m(\xi)$ is a
$p$-structure on the $G$-motive $\xi\rightsquigarrow M_{0}(\xi)$. For an
admissible $M$, let%
\begin{align*}
I(M)  &  =\Aut^{\otimes}(M)\quad\quad\text{(}=\Aut^{\otimes}(M_{0}%
)\cap\Aut^{\otimes}(M_{p})\text{),}\\
X^{p}(M)  &  =X^{p}(M_{0}).
\end{align*}
The group $I(M)$ acts on $X^{p}(M)$, and so we can define%
\[
\mathcal{L}(M)=I(M)\backslash X^{p}(M)/Z^{p}.
\]
An admissible $G$-motive $M$ over $\mathbb{F}$ with coefficients in
$\mathbb{Z}_{(p)}$ is special if $M_{0}$ is special. For an equivalence class
$m$ of special admissible $G$-motives with coefficients in $\mathbb{Z}_{(p)}$,
define%
\[
\mathcal{L}(m)=\varprojlim_{M\in m}\mathcal{L}(M).
\]

\begin{proposition}
\label{a20}We have%
\begin{equation}
\mathcal{L}(G,X)\simeq\bigsqcup\nolimits_{m}\mathcal{L}(m) \label{e2}%
\end{equation}
where $m$ runs over the set of equivalence classes of special admissible
$G$-motives with coefficients in $\mathbb{Z}_{(p)}$.
\end{proposition}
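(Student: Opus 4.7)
The plan is to match the old description of $\mathcal{L}(G,X)$ (in which a $p$-integral structure is an auxiliary datum attached to a $G$-motive in $\Mot(\mathbb{F})$) with the new one (in which that structure is absorbed into the motive itself) by exhibiting admissible $\mathbb{Z}_{(p)}$-coefficient $G$-motives as exactly $G$-motives in $\Mot(\mathbb{F})$ together with a $p$-integral structure. Once this repackaging is in place, the identification (\ref{e2}) follows by an orbit-stabilizer argument.

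First I would set up the translation on objects. Given an admissible $G$-motive $M \colon \xi \rightsquigarrow (M_p(\xi),M_0(\xi),m(\xi))$ with $\mathbb{Z}_{(p)}$-coefficients, set
\begin{equation*}
\underline{\Lambda}_M(\xi) := m(\xi)(M_p(\xi)) \subset \omega_p^{\mathbb{F}}(M_0(\xi));
\end{equation*}
by the very definition of admissibility, $\underline{\Lambda}_M$ is a $p$-integral structure on $M_0$ in the sense of Definition \ref{a12}. Conversely, starting from a special $M_0$ satisfying (\ref{eq4}) and a $p$-integral structure $\underline{\Lambda}$, one puts $M_p(\xi) := \underline{\Lambda}(\xi)$ with the Frobenius coming from $\omega_p^{\mathbb{F}}\circ M_0$ (which preserves $\underline{\Lambda}$ up to $p$ by the strong divisibility formulation of Remark \ref{a13}(b)) and lets $m(\xi)$ be the tautological isomorphism after inverting $p$. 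Using Proposition \ref{a10}, these constructions are mutually inverse, exact, and compatible with the tensor structure.

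Second I would compare morphisms. A morphism $(\alpha_p,\alpha_0)\colon M\to M'$ of $\mathbb{Z}_{(p)}$-coefficient $G$-motives is, by definition, an $\alpha_0\colon M_0\to M_0'$ in $\Mot(\mathbb{F})$ together with an $\alpha_p$ compatible with $m,m'$; equivalently, $\omega_p^{\mathbb{F}}(\alpha_0)$ carries $\underline{\Lambda}_M$ into $\underline{\Lambda}_{M'}$. Hence equivalence classes of admissible $\mathbb{Z}_{(p)}$-coefficient $G$-motives whose rational part lies in a fixed equivalence class $[M_0]$ are in bijection with $I(M_0)$-orbits on $X_p(M_0)$, and for such an $M$ one has
\begin{equation*}
I(M) = \Stab_{I(M_0)}(\underline{\Lambda}_M), \qquad X^p(M) = X^p(M_0).
\end{equation*}

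Third I would assemble the bijection. For each equivalence class of special $M_0$, the orbit-stabilizer decomposition gives
\begin{equation*}
I(M_0)\backslash\bigl((X^p(M_0)/Z^p)\times X_p(M_0)\bigr) \;=\; \bigsqcup_{\underline{\Lambda}\in X_p(M_0)/I(M_0)} \Stab_{I(M_0)}(\underline{\Lambda})\backslash X^p(M_0)/Z^p,
\end{equation*}
which by the previous step equals $\bigsqcup_{[M]\mapsto[M_0]}\mathcal{L}(M)$. Summing over equivalence classes of special $G$-motives in $\Mot(\mathbb{F})$ yields (\ref{e2}). Equivariance for $G(\mathbb{A}_f^p)\times Z(\mathbb{Q}_p)\times\{\Phi\}$ is automatic: the group $G(\mathbb{A}_f^p)$ acts only through $X^p$, while $Z(\mathbb{Q}_p)$ and $\Phi$ act through $X_p(M_0)$ on the left and, on the right, through the intrinsic Frobenius on $M_p$ together with the action of $Z(\mathbb{Q}_p)$ on the $p$-lattice; the rule $\Phi\underline{\Lambda}(\xi)=F^{[k(v):\mathbb{F}_p]}\underline{\Lambda}(\xi)$ matches the Frobenius pullback on the admissible motive via the chosen isomorphism $m$.

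The main obstacle is the verification that admissibility of $M$ is equivalent to the full force of Definition \ref{a12} for $\underline{\Lambda}_M$ — in particular the existence of a trivialization $\eta\colon W\otimes\Lambda\to\underline{\Lambda}$ compatible with the Hodge cocharacter $\mu_0$. This reduces, via Proposition \ref{a10}(b), to the compatibility between the $\mathbb{Z}_{(p)}$-tannakian data and the rational data, and uses that $\xi_0$ together with its defining tensors (Proposition \ref{a2}) suffices to recognize $p$-integral structures in terms of the underlying lattices. The remainder of the argument is bookkeeping, but this compatibility is where the content of the proposition genuinely sits, and it is exactly what the word "admissible" was introduced to encode.
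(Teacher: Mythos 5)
Your overall architecture --- absorb the $p$-integral structure into the motive, identify equivalence classes of special admissible $\mathbb{Z}_{(p)}$-coefficient $G$-motives lying over a fixed class $[M_0]$ with $I(M_0)$-orbits on $X_p(M_0)$, and then apply the orbit--stabilizer decomposition --- is the natural way of filling in the paper's own one-line argument (the ``obvious map'' $\mathcal{L}(M)\rightarrow\mathcal{L}(M_0)$), so the route is essentially the intended one. But one load-bearing step is wrong as stated. You construct the inverse dictionary by setting $M_p(\xi):=\underline{\Lambda}(\xi)$ with ``the Frobenius coming from $\omega_p^{\mathbb{F}}\circ M_0$, which preserves $\underline{\Lambda}$ up to $p$ by the strong divisibility formulation of Remark \ref{a13}(b)''. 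Strong divisibility says, under a trivialization as in Definition \ref{a12}(b), that $F\underline{\Lambda}(\xi)$ is the image of $\mu_0(p^{-1})\cdot\Lambda(\xi)_W$, and this lattice is \emph{not} contained in $\underline{\Lambda}(\xi)$ whenever $\mu_0$ has a positive weight on $\xi$; the paper's own $\mathbb{G}_m$-example, where $F\Lambda=p^{-n}\Lambda$ on the $n$th Tate twist, is a counterexample. So the crystalline Frobenius does not furnish a $\sigma$-linear map $F\colon M_p\rightarrow M_p$ as demanded in the definition of $\Mot_{(p)}(\mathbb{F})$, and the passage from a pair $(M_0,\underline{\Lambda})$ to an admissible $\mathbb{Z}_{(p)}$-coefficient $G$-motive --- the surjectivity half of your dictionary, which is exactly where the content of the proposition sits --- is not yet established. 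You need either to exploit the fact that the definition of a motive with $\mathbb{Z}_{(p)}$-coefficients does not require $m$ to intertwine the Frobenii (and then specify what $F$ on $M_p$ is and check compatibility with morphisms and the tensor structure), or to renormalize $F$ appropriately; the appeal to Proposition \ref{a10} does not settle this point.

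A secondary discrepancy: the paper states that the straightforward proof uses $G(\mathbb{Q}_p)=G(\mathbb{Q})\cdot G(\mathbb{Z}_p)$, and this identity never appears in your argument. In your packaging it is not visibly needed, because the disjoint union over classes of $\mathbb{Z}_{(p)}$-motives absorbs the failure of $I(M_0)$ to act transitively on $X_p(M_0)$; the places where such an input could be hiding are the tacit passages between lattice data indexed by $\Rep_{\mathbb{Z}_{(p)}}(G)$ and genuine $p$-integral structures indexed by $\Rep_{\mathbb{Z}_p}(G)$ (used in both directions of your dictionary), and the compatibility of the bijection with the $Z(\mathbb{Q}_p)$- and $\Phi$-actions, which you declare automatic but which involve moving lattices by elements that only normalize, not stabilize, your chosen representatives. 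Either locate where the identity enters your argument or say explicitly why your bookkeeping avoids it.
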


More precisely, let $M=(M_{p},M_{0},m)$ be a special $\mathbb{Z}{}_{(p)}%
$-motive over $\mathbb{F}$ with $G$-structure. There is an obvious map
$\mathcal{L}(M)\rightarrow\mathcal{L(}M_{0})$, and these maps induce an
isomorphism (\ref{e2}). The proof of this is straightforward using that
$G(\mathbb{Q}_{p})=G(\mathbb{Q})\cdot G(\mathbb{Z}_{p})$ (\cite{milne1994s}, 4.9).

\section{The functor of points defined by a Shimura variety}

Throughout this section $(G,X)$ is a Shimura $p$-datum such that
$(G_{\mathbb{Q}},X)$ is of abelian type in the sense of \cite{milneS1982},
\S 1.\footnote{Recall that a Shimura datum $(G,X)$ is said to be of
\emph{abelian type} if there exists an isogeny $H\rightarrow G^{\mathrm{der}}$
with $H$ a product of almost-simple groups $H_{i}$ such that either
\par
\begin{enumerate}
\item $H_{i}$ is simply connected of type $A$, $B$, $C$, or $D^{\mathbb{R}}$,
or,
\par
\item $H_{i}$ is of type $D_{n}^{\mathbb{H}}$ $\;($$n\geq5$) and equals
$\Res_{F/\mathbb{Q}}H^{\prime}$ for $F$ a totally real field and $H^{\prime}$
is a form of $\SO(2n)$ (double covering of the adjoint group).
\end{enumerate}
\par
\noindent} Moreover, we assume that $\ad h_{x}(i)$ is a Cartan involution on
$G_{\mathbb{R}}/w_{X}(\mathbb{G}_{m})$ for one (hence all) $x\in X$, and we
fix a homomorphism $t\colon G_{\mathbb{Q}}\rightarrow\mathbb{G}_{m}$ (assumed
to exist) such that $t\circ w_{X}=-2$. Recall that the condition on $\ad
h_{x}(i)$ implies that $Z(\mathbb{Q})$ is discrete in $Z(\mathbb{A}_{f})$ and
that $Z(\mathbb{Z}_{(p)})$ is discrete in $Z(\mathbb{A}_{f}^{p})$ (e.g.,
\cite{milne2005}, 5.26).

As before, $\Sh_{p}$ denotes the canonical integral model of $\Sh(G,X)$ over
$\mathcal{O}_{v}$ (see p\pageref{cim}).

For a field $k$ of characteristic zero, $\Mot(k)$ denotes the category of
motives over $k$ based on abelian varieties and using the Hodge classes as
correspondences. It is a semisimple tannakian category over $\mathbb{Q}$ whose
objects are the abelian motives over $k$. We shall simply call them motives. A
$G$-motive over $k$ is an exact tensor functor $\Rep_{\mathbb{Q}%
}(G)\rightarrow\Mot(k)$.

When $k=\mathbb{C}$, Betti cohomology defines a $\mathbb{Q}$-valued fibre
functor $\omega_{B}$. Etale cohomology defines fibre functors $\omega
_{l}\colon\Mot(k)\rightarrow\Vc_{\mathbb{Q}_{l}}$ for all primes $l\neq\infty
$, and an exact tensor functor $\omega_{f}^{p}\colon\Mot(k)\rightarrow
\Mod_{\mathbb{A}_{f}^{p}}$. Strictly speaking, these depend on the choice of
an algebraic closure $k^{\mathrm{al}}$ of $k$, and $\omega_{l}(M)$ is a
$\Gal(k^{\mathrm{al}}/k)$-module for each $M$.

\subsection{Definition of the functor $\mathcal{M}$}

\subsubsection{Admissible $G$-motives.}

Let $E$ be the reflex field $E(G,X)$ of $(G,X)$, and let $k$ be a field
containing $E$. As before, a point $x$ of $X$ defines an exact tensor functor
\[
H_{x}\colon\Rep_{\mathbb{Q}{}}(G)\rightarrow\Hdg_{\mathbb{Q}},\quad
\xi\rightsquigarrow(V(\xi),\xi_{\mathbb{R}}\circ h_{x}).
\]
An $E$-homomorphism $\tau\colon k\rightarrow\mathbb{C}$ defines an exact
tensor functor
\[
\omega_{\tau}\colon\Mot(k)\rightarrow\Hdg_{\mathbb{Q}},\quad X\mapsto
\omega_{B}(\tau X)\text{.}%
\]
We say that a $G$-motive $M$ over $k$ is \emph{admissible with respect to
}$\tau$ if $\omega_{\tau}\circ M$ is isomorphic to $H_{x}$ for some $x\in X.$

\begin{proposition}
\label{a21}If $M$ is admissible with respect to one $E$-homomorphism
$k\rightarrow\mathbb{C}$, then it is admissible with respect to every
$E$-homomorphism $k\rightarrow\mathbb{C}$.
\end{proposition}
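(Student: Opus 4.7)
The plan is to transport the admissibility data from $\tau_1$ to $\tau_2$ via the intermediate de Rham realization $\omega_{\dR} \circ M$, a fibre functor on $\Rep_{\mathbb{Q}}(G)$ with values in $\Vc_k$, exploiting that the reflex field $E$ is the field of definition of the $G(\mathbb{C})$-conjugacy class $\mathcal{C}$ of cocharacters containing $\mu_x$ for $x \in X$. Choose $\sigma \in \Aut(\mathbb{C}/E)$ with $\sigma \circ \tau_1 = \tau_2$; such a $\sigma$ exists because $\tau_1|_E = \tau_2|_E$.

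For $i = 1,2$, the Betti--de Rham comparison gives tensor-functorial isomorphisms
\[
c_i : \omega_{\tau_i}(M(\xi)) \otimes_{\mathbb{Q}} \mathbb{C} \xrightarrow{\sim} \omega_{\dR}(M(\xi)) \otimes_{k,\tau_i} \mathbb{C}
\]
respecting Hodge filtrations. Admissibility via $\tau_1$ produces an isomorphism $\omega_{\tau_1} \circ M \cong V$ of $\mathbb{Q}$-valued tensor functors under which the Hodge filtration on the right-hand side of $c_1$ corresponds to the filtration on $V \otimes \mathbb{C}$ split by $\mu_{x_1}^{-1}$. Equivalently, the $G$-torsor $P := \underline{\Isom}^{\otimes}(V \otimes k, \omega_{\dR} \circ M)$ over $k$ is trivialized by $\tau_1$, and the filtration on $\omega_{\dR} \circ M$ is of type $\mathcal{C}$. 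Applying $\sigma$ yields a trivialization of $P$ along $\tau_2$ under which the Hodge filtration on $\omega_{\dR}(M) \otimes_{k,\tau_2} \mathbb{C}$ pulls back to a filtration on $V \otimes \mathbb{C}$ split by a cocharacter $\mu_2 \in \mathcal{C}$ (using that $\mathcal{C}$ is fixed by $\sigma$). Combined with the embedding-independent comparisons $\omega_{\tau_i}(M) \otimes_{\mathbb{Q}} \mathbb{Q}_\ell \cong \omega_\ell \circ M$ at the finite places, the $\mathbb{Q}$-torsor $\underline{\Isom}^{\otimes}(V, \omega_{\tau_2} \circ M)$ is locally trivial at every place of $\mathbb{Q}$. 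The Hasse principle then produces an isomorphism $\alpha : V \xrightarrow{\sim} \omega_{\tau_2} \circ M$ over $\mathbb{Q}$. The rational Hodge structure on $V$ transported through $\alpha$ is polarizable (via the motivic polarization), has weight $w_X$, and has cocharacter class $\mathcal{C}$; hence it is given by some $h : \mathbb{S} \to G_{\mathbb{R}}$ in the $G(\mathbb{R})$-conjugacy class $X$, so $\omega_{\tau_2} \circ M \cong H_{x_2}$ for the corresponding $x_2 \in X$.

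The main technical obstacle is the Hasse-principle step, where one deduces the triviality of $\underline{\Isom}^{\otimes}(V, \omega_{\tau_2} \circ M)$ over $\mathbb{Q}$ from its local triviality; this requires controlling $\Ker^{1}(\mathbb{Q}, G)$. For $(G,X)$ of abelian type, one can reduce to the Siegel case, where polarized rational Hodge structures of a given type are classified on the nose, and transfer back through the defining isogeny; some care is still needed to ensure the resulting $h$ lies in the specific $G(\mathbb{R})$-conjugacy class $X$ rather than another class sharing the same $\mathcal{C}$.
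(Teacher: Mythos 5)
Your reduction of the filtration data to the $E$-rationality of the conjugacy class $\mathcal{C}$ is the right first move, and the finite-place comparisons are fine, but the proof as written stops exactly where the content of the proposition lies. First, local triviality of $\underline{\Isom}^{\otimes}(V,\omega_{\tau_{2}}\circ M)$ at the real place is asserted but never established: knowing that the Hodge filtration of $\omega_{\tau_{2}}\circ M$ over $\mathbb{C}$ is split by a cocharacter in $\mathcal{C}$ gives no information about the class in $H^{1}(\mathbb{R},G)$, since all fibre functors become isomorphic over $\mathbb{C}$; one needs a genuine polarization argument at $\infty$ (this is where the standing hypotheses of this section --- the Cartan condition on $\ad h_{x}(i)$ and the Tate-triple homomorphism $t$ --- must enter). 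Second, and more seriously, the passage from local to global triviality is obstructed by $\Ker^{1}(\mathbb{Q},G)$, which is nontrivial for many groups of abelian type (its order even appears as a factor in the integral formula (\ref{eq5}) of this paper precisely because locally isomorphic objects need not be globally isomorphic). You flag this, together with the problem of landing in the specific $G(\mathbb{R})$-conjugacy class $X$ rather than another class with the same $\mathcal{C}$, as ``technical obstacles,'' but these two points \emph{are} the proposition: if the torsor could be locally trivial yet globally nontrivial, or if $h$ could land in a wrong real class, the statement would simply be false, so a proof must show this cannot happen, not defer it. The proposed fix --- reduce to the Siegel case and transfer back through the defining isogeny --- is not carried out and is not routine: admissibility for $(G,X)$ requires a trivialization respecting the extra tensors, i.e.\ of a $G$-torsor and not merely of a $\GSp$-torsor, and the passage from Hodge type to abelian type through an isogeny of derived groups is exactly the delicate step (cf.\ the discussion of LRp/LR+ in the introduction). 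There is also a smaller set-theoretic gap at the very start: an automorphism $\sigma\in\Aut(\mathbb{C}/E)$ with $\sigma\circ\tau_{1}=\tau_{2}$ need not exist for an arbitrary field $k$ (the two embeddings can have different transcendence cofinality in $\mathbb{C}$); one must first descend $M$, or at least a defining representation with finitely many tensors, to a subfield of $k$ finitely generated over $E$.

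For comparison, the paper disposes of the statement by citing the argument of Milne's 1994 article (3.29), which exploits that the objects are abelian motives with absolute Hodge classes as correspondences: conjugation by $\sigma$ carries all the relevant structure along with the motive, so the independence of $\tau$ is extracted from the theory of conjugates of abelian motives rather than from a local-global principle for $G$-torsors. Your route is genuinely different, and in its present form the Hasse-principle core and the real place remain unproved; to make it work you would have to supply the polarization argument at $\infty$, an argument killing (or circumventing) the $\Ker^{1}(\mathbb{Q},G)$ obstruction, and a proof that $\mu_{h}\in\mathcal{C}$ plus polarizability pins down the $G(\mathbb{R})$-class $X$ --- none of which is routine.
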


\begin{proof}
The proof is the same as that of \cite{milne1994s}, 3.29.
\end{proof}

\noindent\begin{minipage}[b]{4.0in}
We say that a $G$-motive $M$ over a field $k$ containing $E$ is
\emph{admissible} if there exists a $G$-motive $M_{0}$ over a subfield $k_{0}$
of $k$ containing $E$ such that
\begin{itemize}
\item $M_{0}$ gives rise to $M$ by extension of
scalars and
\item $M_{0}$ is admissible with respect to some homomorphism
$k_{0}\rightarrow\mathbb{C}$.\end{itemize}\end{minipage}\hfill
\raisebox{0.5in}{$\bfig
\node a(0,0)[E]
\node b(0,300)[k_0]
\node c(-400,350)[k]
\node d(0,600)[M_0]
\node e(-400,650)[M]
\node g(400,650)[H_x]
\node f(400,350)[\mathbb{C}]
\arrow[a`b;{}]
\arrow[b`c;{}]
\arrow/-/[b`d;{}]
\arrow/-/[c`e;{}]
\arrow/-/[d`e;{}]
\arrow[a`f;{}]
\arrow[b`f;{}]
\arrow/-/[d`g;{}]
\arrow/-/[f`g;{}]
\efig$}

\subsubsection{Etale $p$-integral structures.}

Let $k$ be a field containing $E$, and let $\Gamma=\Gal(k^{\mathrm{al}}/k)$
for some algebraic closure $k^{\mathrm{al}}$ of $k$. For a $G$-motive $M$ over
$k$, let $M(p)\colon\Rep_{\mathbb{Q}_{p}}(G)\rightarrow\Mot(k)_{(\mathbb{Q}%
_{p})}$ be the exact tensor functor obtained by extension of scalars
$\mathbb{Q}\rightarrow\mathbb{Q}_{p}$.

\begin{definition}
\label{a22}An \emph{\'{e}tale} $p$\emph{-integral structure\/} on a $G$-motive
$M\colon\Rep_{\mathbb{Q}}(G)\rightarrow\Mot(k)$ is an exact tensor functor
\underline{$\Lambda$}$_{p}\colon\Rep_{\mathbb{Z}_{p}}(G_{p})\rightarrow
\Rep_{\mathbb{Z}_{p}}(\Gamma)$ such that, for all $\xi$ in $\Rep_{\mathbb{Z}%
_{p}}(G)$, $\underline{\Lambda}_{p}(\xi)$ is a $\mathbb{Z}_{p}$-lattice in
$(\omega_{p}\circ M(p))(\xi_{\mathbb{Q}_{p}})$.
\end{definition}

The condition means that $\underline{\Lambda}_{p}(\xi)\subset(\omega_{p}\circ
M(p))(\xi_{\mathbb{Q}_{p}})$ for all $\xi$, and there is a commutative
diagram
\[
\xymatrix{
\Rep_{\mathbb{Q}_{p}}(G)\ar[r]^{M(p)}
&\Mot(\mathbb{F})_{\left(\mathbb{Q}_{p}\right)}\ar[r]^{\omega_{p}}
&\Rep_{\mathbb{Q}_p}(\Gamma)\\
\Rep_{\mathbb{Z}_{p}}(G)\ar[u]_{\mathbb{Q}_p\otimes_{\mathbb{Z}_p}-}\ar[rr]^{\underline{\Lambda}_p}
&&\Rep_{\mathbb{Z}_p}(\Gamma)\ar[u]_{\mathbb{Q}_p\otimes_{\mathbb{Z}_p}-}.
}
\]

\begin{lemma}
\label{a23}For every exact tensor functor $\underline{\Lambda}_{p}%
\colon\Rep_{\mathbb{Z}_{p}}(G)\rightarrow\Rep_{\mathbb{Z}_{p}}(\Gamma)$ there
exists an isomorphism $\Lambda\rightarrow\omega_{\mathrm{forget}}%
\circ\underline{\Lambda}_{p}$ of tensor functors. Here $\Lambda$ denotes the
forgetful functor $\xi\mapsto\Lambda(\xi)\colon\Rep_{\mathbb{Z}_{p}%
}(G)\rightarrow\Mod_{\mathbb{Z}_{p}}.$
\end{lemma}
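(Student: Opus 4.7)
The plan is to reduce the statement to Proposition \ref{a1}, applied to the reductive group $G_{\mathbb{Z}_p}$ over the henselian discrete valuation ring $R=\mathbb{Z}_p$ (whose residue field $\mathbb{F}_p$ has dimension $\leq 1$). The point is that $\omega_{\mathrm{forget}} \circ \underline{\Lambda}_p$ is naturally an exact tensor functor from $\Rep_{\mathbb{Z}_p}(G)$ to $\Mod_{\mathbb{Z}_p}$, and any such functor is isomorphic to the forgetful functor $\Lambda$ by Proposition \ref{a1}.

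First, I would observe that $G \otimes_{\mathbb{Z}_{(p)}} \mathbb{Z}_p$ is reductive (by definition of a Shimura $p$-datum, since $G$ is reductive over $\mathbb{Z}_{(p)}$), so Proposition \ref{a1} applies to it. Next, the forgetful functor $\omega_{\mathrm{forget}}\colon \Rep_{\mathbb{Z}_p}(\Gamma) \to \Mod_{\mathbb{Z}_p}$ is clearly a tensor functor; since $\underline{\Lambda}_p$ is also a tensor functor, the composite $\omega_{\mathrm{forget}} \circ \underline{\Lambda}_p$ is a tensor functor.

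The main (though routine) verification is that this composite is exact in the sense defined in the paper's notational preamble. For condition (a), an exact sequence in $\Rep_{\mathbb{Z}_p}(G)$ that is exact as a sequence of $\mathbb{Z}_p$-modules is mapped by $\underline{\Lambda}_p$ to an exact sequence in $\Rep_{\mathbb{Z}_p}(\Gamma)$ (since $\underline{\Lambda}_p$ is exact), and the underlying sequence of $\mathbb{Z}_p$-modules is then exact since the forgetful functor preserves exactness. For condition (b), if $\varphi\colon \xi_1 \to \xi_2$ in $\Rep_{\mathbb{Z}_p}(G)$ has $(\omega_{\mathrm{forget}} \circ \underline{\Lambda}_p)(\varphi)$ an isomorphism of $\mathbb{Z}_p$-modules, then its $\mathbb{Z}_p$-linear inverse is automatically $\Gamma$-equivariant, hence $\underline{\Lambda}_p(\varphi)$ is an isomorphism in $\Rep_{\mathbb{Z}_p}(\Gamma)$; by the exactness of $\underline{\Lambda}_p$, this forces $\varphi$ itself to be an isomorphism.

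Having verified this, Proposition \ref{a1} produces an isomorphism of tensor functors $\Lambda \to \omega_{\mathrm{forget}} \circ \underline{\Lambda}_p$, which is exactly the desired conclusion. I do not foresee any serious obstacle; the only conceptual issue is a careful translation between the categories $\Rep_{\mathbb{Z}_p}(G)$, $\Rep_{\mathbb{Z}_p}(\Gamma)$, and $\Mod_{\mathbb{Z}_p}$ and checking that the definition of exactness for tensor functors is preserved under composition with the forgetful functor out of $\Rep_{\mathbb{Z}_p}(\Gamma)$.
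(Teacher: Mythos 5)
Your proposal is correct and coincides with the paper's own argument: the paper proves Lemma \ref{a23} simply by the words ``Apply (\ref{a1})'', applied to $G_{\mathbb{Z}_p}$ over the henselian ring $\mathbb{Z}_p$ with residue field of dimension $\leq 1$. Your added verification that $\omega_{\mathrm{forget}}\circ\underline{\Lambda}_{p}$ is an exact tensor functor into $\Mod_{\mathbb{Z}_p}$ is just the routine check the paper leaves implicit.
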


\begin{proof}
Apply (\ref{a1}).
\end{proof}

\subsubsection{The functor $\mathcal{M}$.}

Let $k$ be a field containing $E$. For an admissible $G$-motive $M$ over $k$,
define%
\begin{align*}
I(M)  &  =\Aut^{\otimes}(M),\\
X^{p}(M)  &  =\Isom^{\otimes}(\mathbb{A}_{f}^{p}\otimes V,\omega_{f}^{p}\circ
M),\\
X_{p}(M)  &  =\{\text{\'{e}tale }p\text{-integral structures on }M\}.
\end{align*}
Then $I(M)$ acts on $X^{p}(M)$ and $X_{p}(M)$ on the left, $G(\mathbb{A}%
_{f}^{p})$ acts on $X^{p}(M)$ on the right, and $Z_{p}(\mathbb{Q}{}_{p})$ acts
on $X_{p}(M)$. We define%
\[
\mathcal{M}(M)=I(M)\backslash X^{p}(M)\times X_{p}(M)
\]
regarded as a $G(\mathbb{A}_{f}^{p})\times Z(\mathbb{Q}_{p})$-set. An
isomorphism $M\rightarrow M^{\prime}$ defines an equivariant bijection
$\mathcal{M}(M)\rightarrow\mathcal{M}(M^{\prime})$ which is independent of the
isomorphism. For an equivalence class $m$ of $G$-motives over $k$, we define%
\[
\mathcal{M}(m)=\varprojlim_{M\in m}\mathcal{M}(M).
\]
Define%
\[
\mathcal{M}(G,X)(k)=\bigsqcup\nolimits_{m}\mathcal{M}(m)
\]
where $m$ runs over the equivalence classes of admissible $G$-motives. This is
a set with an action of $G(\mathbb{A}_{f}^{p})\times Z(\mathbb{Q}_{p})$. For a
fixed $k$, it is a functor from the category of Shimura $p$-data to the
category of sets, and, for a fixed $(G,X)$, it is a functor from the category
of $E$-fields to the category of sets endowed with an action of $G(\mathbb{A}%
_{f}^{p})\times Z(\mathbb{Q}_{p}$).

\subsection{The points of $\mathrm{Sh}_{p}$ with coordinates in the complex
numbers}

With our assumptions%
\begin{align*}
\Sh_{p}(\mathbb{C})  &  \simeq G(\mathbb{Z}_{(p)})\backslash X\times
G(\mathbb{A}_{f}^{p})\\
&  \simeq G(\mathbb{Q})\backslash X\times G(\mathbb{A}_{f}^{p})\times
(G(\mathbb{Q}_{p})/G(\mathbb{Z}_{p}))
\end{align*}
(\cite{milne1994s}, 4.11, 4.12).

Let $M$ be an admissible $G$-motive over $\mathbb{C}$, and let $(\eta
,\underline{\Lambda}_{p})\in X^{p}(M)\times X_{p}(M)$. Because $M$ is
admissible, there exists an isomorphism of tensor functors $\beta\colon
\omega_{B}\circ M\rightarrow V$ sending $h_{M}$ to $h_{x}$ for some $x\in X$.
When we tensor this with $\mathbb{A}{}_{f}^{p}$ and compose with $\eta$,%
\[
\xymatrixcolsep{3.5pc}\xymatrix{
\mathbb{A}_{f}^{p}\otimes_{\mathbb{Q}}V\ar[r]_-{\eta}\ar@/^1.5pc/[rrr]^{g(\eta)}&\omega_{f}^{p}\circ M\ar[r]^-{\simeq}
&\mathbb{A}_{f}^{p}\otimes_{\mathbb{Q}}(\omega_{B}\circ M)
\ar[r]_-{\mathbb{A}_{f}^{p}\otimes_{\mathbb{Q}}\beta}
&\mathbb{A}_{f}^{p}\otimes_{\mathbb{Q}}V,
},
\]
we get an element of $\underline{\Aut}^{\otimes}(\mathbb{A}_{f}^{p}%
\otimes_{\mathbb{Q}}V)\simeq G(\mathbb{A}_{f}^{p})$. The $p$-integral
structure $\underline{\Lambda}_{p}$ is transformed by $\beta$ into a
$p$-integral structure on the forgetful functor $V\colon\Rep_{\mathbb{Q}_{p}%
}(G)\rightarrow\Vc_{\mathbb{Q}_{p}}$, and it follows from (\ref{a1}) there
exists a $g=g(\underline{\Lambda}_{p})\in G(\mathbb{Q}_{p})$ such that the
isomorphism%
\[
\beta(\xi)\colon(\omega_{p}\circ M(p))(\xi_{\mathbb{Q}_{p}})\longrightarrow
V(\xi_{\mathbb{Q}_{p}})
\]
maps $\underline{\Lambda}_{p}(\xi)$ onto $g\cdot\Lambda(\xi)$ for all $\xi$ in
$\Rep_{\mathbb{Z}_{p}}(G)$. Since $\beta$ is uniquely determined up to an
element of $G(\mathbb{Q})$, we get a well-defined map%
\begin{equation}
\lbrack\eta,\underline{\Lambda}_{p}]\mapsto\lbrack x,\beta\circ\eta
,g(\underline{\Lambda}_{p})]\colon\mathcal{M}(M)\rightarrow\Sh_{p}%
(\mathbb{C}). \label{e4}%
\end{equation}
One checks the following statement as in\ \cite{milne1994s}, 4.14: for each
equivalence class $m$ of admissible $G$-motives, the maps (\ref{e4}) define an
injective map $\mathcal{M}(m)\overset{\textup{{\tiny def}}}{=}\varprojlim
_{m\in M}\mathcal{M}(M)\rightarrow\Sh_{p}(\mathbb{C})$, and the images of the
maps $\mathcal{M}(m)\rightarrow\Sh_{p}(\mathbb{C})$ for distinct $m$ are
disjoint and cover $\Sh_{p}(\mathbb{C}).$ In other words, the following is true.

\begin{proposition}
\label{a24}The above maps define a $G(\mathbb{A}_{f}^{p})\times Z(\mathbb{Q}%
_{p})$-equivariant bijection%
\[
\alpha(\mathbb{C})\colon\mathcal{M}(G,X)(\mathbb{C})\rightarrow\Sh_{p}%
(G,X)(\mathbb{C}).
\]

\end{proposition}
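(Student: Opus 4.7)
The plan is to verify four assertions about the map (\ref{e4}): (i) it descends along the $I(M)$-action and is compatible with isomorphisms of $G$-motives, thereby yielding $\alpha_m \colon \mathcal{M}(m)\to\Sh_p(G,X)(\mathbb{C})$; (ii) each $\alpha_m$ is injective; (iii) the images of $\alpha_m$ and $\alpha_{m'}$ are disjoint for $m\neq m'$; (iv) their union exhausts $\Sh_p(G,X)(\mathbb{C})$. Equivariance for $G(\mathbb{A}_f^p)\times Z(\mathbb{Q}_p)$ will be built into the construction and will not require separate argument.

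For (i), I would observe that any two Hodge-theoretic trivialisations $\beta,\beta'\colon \omega_B\circ M\to V$ matching $h_M$ with points of $X$ differ by an element of $\Aut^{\otimes}(V)(\mathbb{Q})=G(\mathbb{Q})$, which is precisely the ambiguity absorbed by the outer quotient in $G(\mathbb{Q})\backslash X\times G(\mathbb{A}_f^p)\times G(\mathbb{Q}_p)/G(\mathbb{Z}_p)$. An automorphism $u\in I(M)=\Aut^{\otimes}(M)$ modifies $(\eta,\underline{\Lambda}_p)$ and, after transport through $\beta$, also acts as an element of $G(\mathbb{Q})$ killed by the same quotient; this is the bookkeeping of \cite{milne1994s}, 4.14, and extends the map to the projective limit defining $\mathcal{M}(m)$.

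The central step is surjectivity (iv). Given $[x,a,bG(\mathbb{Z}_p)]\in \Sh_p(G,X)(\mathbb{C})$, I would construct $M$ as follows. Because $(G,X)$ is of abelian type, the tensor functor $H_x\colon\Rep_{\mathbb{Q}}(G)\to\Hdg_{\mathbb{Q}}$ factors through the essential image of $\omega_B\colon\Mot(\mathbb{C})\to\Hdg_{\mathbb{Q}}$, on which $\omega_B$ is fully faithful. Fixing a tensor quasi-inverse gives an admissible $M_x\colon\Rep_{\mathbb{Q}}(G)\to\Mot(\mathbb{C})$ with $\omega_B\circ M_x\simeq H_x$; set $M=M_x$. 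The factor $a\in G(\mathbb{A}_f^p)$ supplies $\eta\in X^p(M)$ via the comparison isomorphism $\omega_f^p\circ M\simeq \mathbb{A}_f^p\otimes_{\mathbb{Q}}V$, and Proposition \ref{a1} applied over the henselian discrete valuation ring $\mathbb{Z}_p$ (noting that $\Gal(\mathbb{C}/\mathbb{C})$ is trivial, so that $\Rep_{\mathbb{Z}_p}(\Gamma)=\Mod_{\mathbb{Z}_p}$) upgrades the coset $bG(\mathbb{Z}_p)$ to an étale $p$-integral structure $\underline{\Lambda}_p$ by translating the standard lattice functor $\Lambda$. By construction $(\eta,\underline{\Lambda}_p)\mapsto [x,a,bG(\mathbb{Z}_p)]$.

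Injectivity (ii) then reverses the procedure: two pairs with the same image produce, via the fully faithful $\omega_B$, an element of $\Aut^{\otimes}(M)=I(M)$ carrying one to the other. Disjointness (iii) is formal, since applying the surjectivity construction to a shared image would force $m\simeq m'$. The main obstacle will be (iv): it rests on the abelian-type hypothesis together with the full faithfulness of the Betti realization on abelian motives, and is the one place where the assumption on $(G,X)$ enters.
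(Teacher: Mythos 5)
Your proposal is correct and takes essentially the same route as the paper: the paper constructs the map (\ref{e4}) from a trivialisation $\beta$ of $\omega_{B}\circ M$ unique up to $G(\mathbb{Q})$ and then delegates exactly your checks (well-definedness, injectivity, disjointness of images, covering) to \cite{milne1994s}, 4.14. Your surjectivity step --- factoring $H_{x}$ through the essential image of the fully faithful $\omega_{B}$ on abelian motives, which is where the abelian-type hypothesis enters --- is precisely the content of that reference, so the two arguments coincide in substance.
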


\subsection{The points of $\mathrm{Sh}_{p}$ with coordinates in a field of
characteristic zero}

The next result is a restatement of \cite{milne1994s}, 3.13.

\begin{theorem}
\label{a25}Let $k$ be a field containing $E$. For any $E$-homomorphism
$\tau\colon k\rightarrow\mathbb{C}$, the restriction of $\alpha(\mathbb{C})$
to $\mathcal{M}(k)$ factors through $\Sh_{p}(k)$:
\[
\begin{CD}
\mathcal{\mathcal{M}}(\mathbb{C}) @>{\alpha(\mathbb{C})}>> \Sh_{p}(\mathbb{C})\\
@AA{\tau}A@A{\bigcup}A{\tau}A\\
\mathcal{M}(k) @>{\alpha(k)}>> \Sh_{p}(k)
\end{CD}
\]
The map $\alpha(k)$ is a bijection which is independent of $\tau$, and it is
equivariant for the actions of $G(\mathbb{A}_{f}^{p})\times Z(\mathbb{Q}_{p})$
and $\Gal(k^{\mathrm{al}}/k)$.
\end{theorem}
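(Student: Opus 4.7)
The plan is to reduce the theorem to four checks: (i) the image of $\mathcal{M}(k)$ under $\alpha(\mathbb{C}) \circ \tau$ lies in the subset $\Sh_p(k) \subset \Sh_p(\mathbb{C})$ of $\Gal(k^{\mathrm{al}}/k)$-fixed points; (ii) the resulting map $\alpha(k)$ is independent of the chosen $E$-embedding $\tau$; (iii) $\alpha(k)$ is bijective; (iv) $\alpha(k)$ is equivariant for $G(\mathbb{A}_f^p) \times Z(\mathbb{Q}_p) \times \Gal(k^{\mathrm{al}}/k)$.

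For (i), fix an admissible $G$-motive $M$ over $k$ and $(\eta,\underline{\Lambda}_p) \in X^p(M) \times X_p(M)$. The key observation is that $\omega_f^p \circ M$ and $\underline{\Lambda}_p$ naturally take values in Galois-module categories, while $\eta$ and $\underline{\Lambda}_p$ are morphisms of such functors, hence automatically $\Gal(k^{\mathrm{al}}/k)$-equivariant. After base-changing via $\tau$, one chooses $\beta\colon\omega_B\circ(\tau M)\to V$ carrying $h_M$ to some $h_x$; for $\sigma\in\Gal(k^{\mathrm{al}}/k)$ the difference $\sigma\beta\cdot\beta^{-1}$ is a tensor automorphism of the forgetful functor $V$, hence an element of $G(\mathbb{Q})$ which adjusts the representative $[x,\beta\circ\eta,g(\underline{\Lambda}_p)]$ without changing its class in the double coset space. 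This forces the image to lie in $\Sh_p(k)$. Step (iv) is then immediate from the same computation, since an element of $\Gal(k^{\mathrm{al}}/k)$ acts on $\omega_f^p \circ M$ and on $\underline{\Lambda}_p$, and this action is transferred through $\beta$ to a $G(\mathbb{Q})$-translation on $\Sh_p(\mathbb{C})$.

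For (ii), two $E$-embeddings $\tau_1,\tau_2\colon k\to\mathbb{C}$ differ on $k^{\mathrm{al}}$ by an element $\sigma\in\Gal$ (once compatible algebraic closures are chosen). Proposition~\ref{a21} ensures that admissibility is preserved, so both give well-defined maps to $\Sh_p(\mathbb{C})$. The images differ by the action of $\sigma$, but by (i) they land in $\Sh_p(k)$, on which $\sigma$ acts trivially; hence the two maps agree.

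For (iii), injectivity follows as in the complex case (\ref{a24}): two triples $(M,\eta,\underline{\Lambda}_p)$ and $(M',\eta',\underline{\Lambda}_p')$ mapping to the same point of $\Sh_p(\mathbb{C})$ already agree in $\mathcal{M}(\mathbb{C})$, and the equivalence descends to $k$ because it is realized by an isomorphism of $G$-motives commuting with the $\Gal(k^{\mathrm{al}}/k)$-action inherent to $X^p$ and $X_p$. Surjectivity is the main obstacle: given a point $P\in\Sh_p(k)$, view it in $\Sh_p(\mathbb{C})$ via $\tau$; by Proposition~\ref{a24} it corresponds to an equivalence class of admissible $G$-motives $M_{\mathbb{C}}$ over $\mathbb{C}$ together with $(\eta_{\mathbb{C}},\underline{\Lambda}_{p,\mathbb{C}})$. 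One must show that this data descends to $k$. This is the content (for abelian-type $(G,X)$) of \cite{milne1994s}, 3.13, which in turn rests on the fact that abelian motives, together with their tensor-compatible étale structures, satisfy effective Galois descent: the $\Gal(k^{\mathrm{al}}/k)$-invariance of $P$ provides a descent datum on $M_{\mathbb{C}}$, and faithfully flat descent (or, equivalently, the tannakian formalism applied to the $\Gal$-fixed subcategory) yields a $G$-motive over $k$ with the required properties. I expect this descent step, rather than the formal checks, to be the technical heart of the argument.
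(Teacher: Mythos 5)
Your proposal misallocates the difficulty, and the steps you present as formal contain the real gap. The paper itself offers no argument here: Theorem \ref{a25} is introduced as a restatement of \cite{milne1994s}, 3.13, so its entire content is delegated to that result. The subset $\Sh_{p}(k)\subset\Sh_{p}(\mathbb{C})$ is cut out by the model of the Shimura variety over $E$ (ultimately by the canonical model and its extension property), and the action of $\Gal(k^{\mathrm{al}}/k)$ on $\Sh_{p}(k^{\mathrm{al}})$ is the one determined by that model; it is not visible in the complex uniformization $G(\mathbb{Q})\backslash\left(X\times G(\mathbb{A}_{f}^{p})\times G(\mathbb{Q}_{p})/G(\mathbb{Z}_{p})\right)$. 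Consequently, showing that the class $[x,\beta\circ\eta,g(\underline{\Lambda}_{p})]$ attached to an admissible $G$-motive over $k$ is a $k$-rational point is not a matter of checking that a representative changes by an element of $G(\mathbb{Q})$: it requires identifying the canonical-model Galois action with the natural Galois action on the motivic data (the \'{e}tale realizations and level structures), and that identification is exactly what \cite{milne1994s}, 3.13 proves, using the abelian-type hypothesis and a comparison at special (CM) points via the reciprocity law of the main theorem of complex multiplication. Your steps (i), (ii) and (iv) presuppose this identification, so the argument is circular at the decisive point; nothing in your sketch touches special points or reciprocity.

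There is also a concrete technical slip: $\sigma\in\Gal(k^{\mathrm{al}}/k)$ does not act on $\omega_{B}\circ\tau M$, since Betti cohomology is attached to a chosen embedding into $\mathbb{C}$ and is not a Galois module; hence the element \textquotedblleft$\sigma\beta\cdot\beta^{-1}$\textquotedblright\ is not defined. Galois acts only on $\omega_{f}^{p}\circ M$, on $\omega_{p}\circ M(p)$ and on $\underline{\Lambda}_{p}$, and transporting that action to the uniformization requires the very comparison discussed above. The same issue undercuts your surjectivity argument: effectivity of Galois descent for abelian motives is fine, but to extract a descent datum on $M_{\mathbb{C}}$ from the $k$-rationality of a point $P\in\Sh_{p}(k)$ you must already know how $\Gal(k^{\mathrm{al}}/k)$ acts on the motivic parameters of points of $\Sh_{p}(\mathbb{C})$, which is again the content of the cited theorem. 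In short, the correct course here is the one the paper takes: invoke \cite{milne1994s}, 3.13 for the whole statement (or reproduce its proof, including the CM-point comparison), rather than treating the factorization through $\Sh_{p}(k)$, the independence of $\tau$, and the Galois equivariance as formal consequences.
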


For a fixed $k$, $\alpha(k)$ is an isomorphism of functors from the category
of Shimura $p$-data to sets, and, for a fixed $(G,X)$, it is an isomorphism of
functors from the category of $E$-fields to the category of sets with an
action of $G(\mathbb{A}_{f}^{p})\times Z(\mathbb{Q}_{p})$.

\subsection{The category of motives over a field of characteristic zero with
$\mathbb{Z}{}_{(p)}$-cofficients}

It is, perhaps, more natural to state Theorem \ref{a25} in terms of motives
with $\mathbb{Z}{}_{(p)}$-coefficients. Let $k$ be a field of characteristic zero.

A \emph{motive} $M$ \emph{over} $k$ \emph{with coefficients in }$\mathbb{Z}%
{}_{(p)}$ is a triple $(M_{p},M_{0},m)$ consisting of

\begin{enumerate}
\item a finitely generated $\mathbb{Z}_{p}$-module $M_{p}$ equipped with a
continuous action of $\Gal(k^{\mathrm{al}}/k)$,

\item an object $M_{0}$ of $\Mot(\mathbb{F})$, and

\item an isomorphism $m\colon(M_{p})_{\mathbb{Q}}\rightarrow\omega
_{p}^{\mathbb{F}}(M_{0})$.
\end{enumerate}

\noindent A morphism $\alpha\colon M\rightarrow N$ of motives with
$\mathbb{Z}{}_{(p)}$-coefficients is a pair of morphisms%
\[
(\alpha_{p}\colon M_{p}\rightarrow N_{p},\alpha_{0}\colon M_{0}\rightarrow
N_{0})
\]
such that $n\circ\alpha_{p}=\omega_{p}^{\mathbb{F}}(\alpha_{0})\circ m$. Let
$\Mot_{(p)}(k)$ be the category of motives over $k$ with coefficients in
$\mathbb{Z}{}_{(p)}$, and let $\Mot_{(p)}^{\prime}(k)$ be the full subcategory
of objects $M$ such that $M_{p}$ is torsion-free. The objects of
$\Mot_{(p)}^{\prime}(k)$ will be called torsion-free.

\begin{proposition}
\label{a26}With the obvious tensor structure, $\Mot_{(p)}(k)$ is an abelian
tensor category over $\mathbb{Z}_{(p)}$, and $\Mot_{(p)}^{\prime}(k)$ is a
rigid pseudo-abelian tensor subcategory of $\Mot_{(p)}(k)$. Moreover:

\begin{enumerate}
\item the tensor functor $M\rightsquigarrow M_{p}\colon\Mot_{(p)}%
(\mathbb{F})\rightarrow\Mod_{\mathbb{Z}_{p}}$ is exact;

\item there are canonical equivalences of categories
\[
\Mot_{(p)}^{\prime}(k)_{\mathbb{Q}}\rightarrow\Mot_{(p)}(k)_{\mathbb{Q}%
}\rightarrow\Mot(k);
\]

\item for all torsion-free motives $M,N$, the cokernel of%
\[
\Hom(M,N)\otimes_{\mathbb{Z}_{(p)}}\mathbb{Z}_{p}\rightarrow\Hom(M_{p},N_{p})
\]
is torsion-free.
\end{enumerate}
\end{proposition}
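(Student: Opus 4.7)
The plan is to follow the proof of Proposition \ref{a10} almost verbatim, replacing $(W(\mathbb{F}),\Mot(\mathbb{F}))$ by $(\mathbb{Z}_p,\Mot(k))$ and the $\sigma$-semilinear Frobenius datum by a continuous action of $\Gamma=\Gal(k^{\mathrm{al}}/k)$; the full arguments are supplied in \cite{milneR2004}. The proof is structural bookkeeping, with the only genuinely delicate point appearing in (c).

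For (a), I would first verify that $\Mot_{(p)}(k)$ is abelian by constructing kernels and cokernels componentwise. Given $\alpha=(\alpha_p,\alpha_0)\colon M\to N$, set $\ker\alpha=(\ker\alpha_p,\ker\alpha_0,\bar m)$ and $\coker\alpha=(\coker\alpha_p,\coker\alpha_0,\bar m')$, where the compatibility isomorphisms $\bar m,\bar m'$ are induced from $m,n$ using that the fibre functor $\omega_p^{\mathbb{F}}$ on $\Mot(k)$ is exact and that rationalization is exact; this gives identifications $\omega_p^{\mathbb{F}}(\ker\alpha_0)\simeq(\ker\alpha_p)_{\mathbb{Q}}$ and analogously for cokernels (using that $\mathbb{Z}_p$-torsion is killed by $-\otimes\mathbb{Q}$). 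Coincidence of image and coimage reduces to the analogous statements in $\Mod_{\mathbb{Z}_p[\Gamma]}$ and in $\Mot(k)$, both abelian. Exactness of $M\rightsquigarrow M_p$ is then immediate. The tensor structure is the obvious one on triples; rigidity on the torsion-free subcategory uses that duals $(M_p^\vee,M_0^\vee,m^\vee)$ are again torsion-free, while arbitrary cokernels can acquire $p$-torsion, explaining why $\Mot_{(p)}'(k)$ is only pseudo-abelian.

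For (b), the first equivalence $\Mot_{(p)}'(k)_{\mathbb{Q}}\to\Mot_{(p)}(k)_{\mathbb{Q}}$ is obtained by killing $p$-torsion: for any $M$, the quotient map $M\to M/\mathrm{tors}$ to a torsion-free object becomes an isomorphism after tensoring $\Hom$-groups with $\mathbb{Q}$. The second equivalence $\Mot_{(p)}(k)_{\mathbb{Q}}\to\Mot(k)$ sends $(M_p,M_0,m)\mapsto M_0$; it is fully faithful because the compatibility condition on morphisms becomes automatic after rationalization (it simply says $\omega_p^{\mathbb{F}}(\alpha_0)$ agrees with itself transported along $m$ and $n$), and essentially surjective because any motive $N\in\Mot(k)$ admits a $\Gamma$-stable $\mathbb{Z}_p$-lattice inside $\omega_p(N)$, which supplies a preimage together with the tautological $m$.

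Part (c) is the only step that uses more than categorical formalities, and I expect it to be the main obstacle, though it is dispatched by the same elementary argument as in \cite{milneR2004}. For torsion-free $M$ and $N$, the projection $\Hom_{\Mot_{(p)}(k)}(M,N)\to\Hom_{\mathbb{Z}_p[\Gamma]}(M_p,N_p)$ is injective (because $\alpha_0$ is then determined by $\alpha_p$ via the faithfulness of $\omega_p$ and the fact that $(M_p)_{\mathbb{Q}}\hookrightarrow$ etc.), and identifies the left-hand $\Hom$-group with $A\cap B$, where $A=\Hom_{\Mot(k)}(M_0,N_0)$ is viewed inside the $\mathbb{Q}_p$-vector space $B_{\mathbb{Q}_p}$ via $\omega_p$ and $m,n$, and $B=\Hom_{\mathbb{Z}_p[\Gamma]}(M_p,N_p)$. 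Torsion-freeness of the cokernel of $(A\cap B)\otimes_{\mathbb{Z}_{(p)}}\mathbb{Z}_p\to B$ then reduces, exactly as in \cite{milneR2004}, to the observation that any $\mathbb{Z}_p$-saturation of $A\cap B$ still lies in $A\cap B$ when tested rationally, so the image is $p$-adically saturated in $B$; since all other ingredients of the proof are the same componentwise verifications already used in part (a), no new obstacle arises.
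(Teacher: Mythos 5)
Your proposal is correct and follows the same route the paper intends: the paper's proof is simply declared routine (as with Proposition \ref{a10}, by transposing the componentwise arguments of \cite{milneR2004} from $(W(\mathbb{F}),\sigma)$ to $(\mathbb{Z}_p,\Gamma)$), and your componentwise construction of kernels, cokernels, duals and idempotent splittings, the torsion-killing and lattice-choosing arguments for (b), and the identification $\Hom(M,N)=A\cap B$ in (c) are exactly these verifications. The only soft spot is your one-line gloss of the saturation statement in (c), which as phrased nearly restates the conclusion; the actual (elementary) argument you are invoking from \cite{milneR2004} is that $A\cap B$ contains a $\mathbb{Q}_p$-spanning set of its $\mathbb{Q}_p$-span, so rational approximation of $\mathbb{Q}_p$-coefficients plus Nakayama gives $\mathbb{Z}_p\cdot(A\cap B)=B\cap\mathbb{Q}_p(A\cap B)$, whence the cokernel embeds in a $\mathbb{Q}_p$-vector space and is torsion-free.
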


\begin{proof}
Routine.
\end{proof}

By a $G$\emph{-motive over }$k$ \emph{with coefficients in }$\mathbb{Z}%
{}_{(p)}$, we mean an exact tensor functor%
\[
M\colon\Rep(G)\rightarrow\Mot_{(p)}(\mathbb{F}{}),\quad\quad\xi
\rightsquigarrow(M_{p}(\xi),M_{0}(\xi),m(\xi)).
\]
We say that $M$ is \emph{admissible} if $\xi\rightsquigarrow M_{0}(\xi)$ is
admissible and the image of $m(\xi)$ is an \'{e}tale $p$-integral structure on
$M_{0}$. For an admissible $M$, let%
\begin{align*}
I(M)  &  =\Aut^{\otimes}(M),\text{ and}\\
X^{p}(M)  &  =X^{p}(M_{0}).
\end{align*}
The group $I(M)$ acts on $X^{p}(M)$, and so we can define%
\[
\mathcal{L}(M)=I(M)\backslash X^{p}(M).
\]
For an equivalence class $m$ of admissible $G$-motives over $k$ with
coefficients in $\mathbb{Z}{}_{(p)}$, define%
\[
\mathcal{L}(m)=\varprojlim_{M\in m}\mathcal{L}(M).
\]
Then%
\[
\mathcal{L}(G,X)\simeq\bigsqcup\nolimits_{m}\mathcal{L}(m)
\]
where $m$ runs over the equivalence classes of admissible $G$-motives with
coefficients in $\mathbb{Z}{}_{(p)}$.

\section{The map $\mathcal{M}(G,X)(W(\mathbb{F}))\rightarrow\mathcal{L}(G,X)$}

Let $(G,X)$ be a Shimura $p$-datum, and $\Sh_{p}(G,X)$ be a canonical integral
model. We often write $?$ for $?(G,X)$. Conjecturally, there should be a map
\[
\mathcal{M}(B(\mathbb{F}))\rightarrow\mathcal{L}(\mathbb{F})
\]
corresponding to the map%
\[
\Sh_{p}(B(\mathbb{F}))\simeq\Sh_{p}(W(\mathbb{F}))\twoheadrightarrow
\Sh_{p}(\mathbb{F}),
\]
but (at present) we are able to define such a map only on a subset of
$\mathcal{M}(B(\mathbb{F}))$.\footnote{The problem is that we defined
$\Mot(\mathbb{F}{})$ as a quotient of $\CM(\mathbb{Q}^{\mathrm{al}})$, but we
would like to realize it as a quotient of the category of abelian motives over
$\mathbb{Q}{}^{\mathrm{al}}$ having good reduction at $v$, or, at least, of
the category generated by abelian varieties over $\mathbb{Q}{}^{\mathrm{al}}$
having good reduction at $v$. The latter would be possible if we knew the
rationality conjecture for all abelian varieties with good reduction at $v$.}

\subsection{The points of $\mathrm{Sh}_{p}$ with coordinates in $B(\mathbb{F}%
)$}

Theorem \ref{a25} gives, in particular, a motivic description of the points of
$\Sh_{p}$ with coordinates in $B(\mathbb{F})$. However, as we explained in
\cite{milne1994s}, p509, because \'{e}tale $p$-integral structures do not
reduce well, to pass from the points on a Shimura variety with coordinates in
$B(\mathbb{F})$ to the points with coordinates in $\mathbb{F}$, we need to
replace the \'{e}tale $p$-integral structures with crystalline $p$-integral structures.

\begin{lemma}
\label{a27} For any admissible $G$-motive $M$ over $B(\mathbb{F})$, there
exists an isomorphism
\[
B(\mathbb{F})\otimes_{\mathbb{Q}}V\rightarrow\omega_{\mathrm{dR}}\circ
M\quad\quad
\]
of tensor functors $\Rep_{\mathbb{Q}}(G)\rightarrow\Vc_{B(\mathbb{F})}$
carrying the filtration defined by $\mu_{0}^{-1}$ into the de Rham filtration
(here $\mu_{0}$ is as in \ref{a11} and $V$ is the forgetful functor).
\end{lemma}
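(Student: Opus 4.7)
The plan is to produce a tensor isomorphism first, and then adjust it by an element of $G(B(\mathbb{F}))$ to align the filtrations. Both steps exploit that $B(\mathbb{F})$ has dimension $\leq 1$ and Steinberg's theorem, exactly as in Proposition \ref{a1}.

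First I would show that $\Isom^{\otimes}(B(\mathbb{F})\otimes_{\mathbb{Q}}V,\;\omega_{\mathrm{dR}}\circ M)$ is non-empty. Both sides are fibre functors on $\Rep_{\mathbb{Q}}(G)$ with values in $\Vc_{B(\mathbb{F})}$, so the $\Isom$-functor is a pseudo-torsor under $\underline{\Aut}^{\otimes}(B(\mathbb{F})\otimes_{\mathbb{Q}}V)=G_{B(\mathbb{F})}$. Pick an $E$-embedding $\tau\colon B(\mathbb{F})\hookrightarrow\mathbb{C}$ (this exists because $E_{v}\subset B(\mathbb{F})$); the de Rham--Betti comparison over $\mathbb{C}$ gives $\omega_{\mathrm{dR}}(\tau\,\cdot)\simeq\mathbb{C}\otimes_{\mathbb{Q}}\omega_{B}(\tau\,\cdot)$, and the admissibility hypothesis (via Proposition \ref{a21}) yields $\omega_{B}\circ\tau M\simeq V$. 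Composing these produces a $\mathbb{C}$-point of the pseudo-torsor, so it is a genuine $G_{B(\mathbb{F})}$-torsor. Its class lies in $H^{1}(B(\mathbb{F}),G)$, which vanishes because $G$ is connected reductive and $B(\mathbb{F})$ has dimension $\leq 1$ (Steinberg, as invoked in Proposition \ref{a1}). Hence an isomorphism $\phi\colon B(\mathbb{F})\otimes_{\mathbb{Q}}V\rightarrow\omega_{\mathrm{dR}}\circ M$ exists.

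Next I would match the filtrations. Transporting the de Rham filtration back along $\phi$ yields, for each $\xi\in\Rep_{\mathbb{Q}}(G)$, a filtration on $B(\mathbb{F})\otimes_{\mathbb{Q}}V(\xi)$ which is tensor-functorial in $\xi$; by the tannakian formalism this filtration is split by some cocharacter $\mu'\colon\mathbb{G}_{m}\rightarrow G_{B(\mathbb{F})}$, well defined up to $G(B(\mathbb{F}))$-conjugacy. The claim is that $\mu'$ is $G(B(\mathbb{F}))$-conjugate to $\mu_{0}^{-1}$, after which replacing $\phi$ by $\phi\circ g$ for an appropriate $g\in G(B(\mathbb{F}))$ gives the desired isomorphism. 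Over $\mathbb{C}$, the de Rham filtration on $\omega_{\mathrm{dR}}(\tau M(\xi))$ corresponds via Betti--de Rham to the Hodge filtration on $\omega_{B}(\tau M(\xi))\otimes\mathbb{C}$; by admissibility this Hodge filtration is the one defined by $\mu_{x}$ for some $x\in X$. Hence $\mu'$ lies in the $G(\mathbb{C})$-conjugacy class $\mathcal{C}$ (up to the same sign convention as in \ref{a13}(b) that makes $\mu_{0}^{-1}$ split the Hodge filtration), and so does $\mu_{0}^{-1}$ by construction in \ref{a11}. Therefore $\mu'$ and $\mu_{0}^{-1}$ are $G(\overline{B(\mathbb{F})})$-conjugate.

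Finally I would upgrade conjugacy over $\overline{B(\mathbb{F})}$ to conjugacy over $B(\mathbb{F})$. The transporter $\underline{\Hom}_{\mu'\to\mu_{0}^{-1}}^{G}$ is a torsor under the centralizer $G^{\mu_{0}^{-1}}$, which is a connected reductive subgroup of $G_{B(\mathbb{F})}$ (a Levi). Its class in $H^{1}(B(\mathbb{F}),G^{\mu_{0}^{-1}})$ vanishes by another application of Steinberg's theorem, so the conjugation can be realized over $B(\mathbb{F})$. The main obstacle — really the only non-formal point — is the matching of filtrations in the second paragraph, and more precisely the identification of $\mu'$ with a representative of $\mathcal{C}$; both the sign bookkeeping of \ref{a13}(b) and the invocation of admissibility via Proposition \ref{a21} need to be handled carefully. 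Once $\mu'$ is recognized as a member of $\mathcal{C}$, the rest is two applications of Steinberg's theorem over a field of dimension $\leq 1$.
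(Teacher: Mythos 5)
Your proposal is correct, but it is organized differently from the paper's proof. You argue in two stages: first trivialize the $G_{B(\mathbb{F})}$-torsor of all tensor isomorphisms $B(\mathbb{F})\otimes_{\mathbb{Q}}V\rightarrow\omega_{\mathrm{dR}}\circ M$ (Steinberg over the dimension $\leq1$ field $B(\mathbb{F})$), and then repair the filtration by showing the splitting cocharacter $\mu'$ of the transported filtration is rationally conjugate to $\mu_{0}^{-1}$, which costs a second torsor argument under the centralizer $Z_{G}(\mu_{0}^{-1})$ (a connected Levi). The paper instead works from the outset with the pseudo-torsor $\mathcal{F}$ of \emph{filtration-compatible} isomorphisms, i.e.\ those carrying $\Filt(\mu_{0}^{-1})$ into the de Rham filtration; this is a pseudo-torsor under the parabolic $P\subset G_{B(\mathbb{F})}$ stabilizing $\Filt(\mu_{0}^{-1})$ (\cite{saavedra1972}, IV 2.2.5), which is connected by Chevalley's theorem, so a single application of Steinberg suffices once a $\mathbb{C}$-point is exhibited via the Betti--de Rham comparison, admissibility, and the fact that $\mu_{0}$ and $\mu_{x}$ lie in the same $G(\mathbb{C})$-conjugacy class. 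The mathematical content is the same in both versions --- the $\mathbb{C}$-point from admissibility, and vanishing of $H^{1}$ of a connected group over $B(\mathbb{F})$ --- but the paper's packaging avoids your intermediate steps (rational splittability of the transported filtration by a cocharacter, and the passage from $G(\mathbb{C})$-conjugacy to $G(\overline{B(\mathbb{F})})$-conjugacy of $\mu'$ and $\mu_{0}^{-1}$, both of which are standard but which you correctly flag as the points needing care). Your version buys a slightly more modular argument at the price of two torsors instead of one.
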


\begin{proof}
For each $B(\mathbb{F})$-algebra $R$, let $\mathcal{F}(R)$ be the set of
isomorphisms of $R$-linear tensor functors
\[
R\otimes_{\mathbb{Q}}V\rightarrow R\otimes_{B(\mathbb{F})}(\omega
_{\mathrm{dR}}\circ M)
\]
carrying $\Filt(\mu_{0}^{-1})$ into the de Rham filtration. Then $\mathcal{F}$
is a pseudo-torsor for the subgroup $P$ of $G_{B(\mathbb{F})}$ respecting the
filtration defined by $\mu_{0}^{-1}$ on each representation of $G$. This is a
parabolic subgroup of $G$ (\cite{saavedra1972}, IV 2.2.5, p223), and hence is
connected by a theorem of Chevalley (\cite{borel1991}, 11.16, p154). Once we
show $\mathcal{F}(\mathbb{C})\neq\emptyset$, so that $\mathcal{F}$ is a
torsor, it will follow from \cite{steinberg1965}, 1.9, that $\mathcal{F}%
(B(\mathbb{F}))\neq\emptyset$.

Choose an $E$-homomorphism $\tau\colon B(\mathbb{F})\rightarrow\mathbb{C}$.
There is a canonical comparison isomorphism
\[
\mathbb{C}\otimes_{\mathbb{Q}}(\omega_{B}\circ\tau M)\rightarrow
\mathbb{C}\otimes_{B(\mathbb{F})}(\omega_{\mathrm{dR}}\circ M)
\]
which carries the Hodge filtration on the left to the de Rham filtration on
the right. By assumption, for some $x\in X$, there exists an isomorphism
\[
H_{x}\rightarrow\omega_{B}\circ\tau M
\]
preserving Hodge structures. On combining these isomorphisms, we obtain an
isomorphism
\[
\mathbb{C}\otimes_{\mathbb{Q}}H_{x}\rightarrow\mathbb{C}\otimes_{B(\mathbb{F}%
)}(\omega_{\mathrm{dR}}\circ M)
\]
carrying $\Filt(\mu_{x}^{-1})$ to the Hodge filtration. By its very
definition, $\mu_{0}$ lies in the same $G(\mathbb{C})$-conjugacy class as
$\mu_{x}$, and so there exists an isomorphism of tensor functors
\[
\mathbb{C}\otimes V\rightarrow\mathbb{C}\otimes H_{x}%
\]
carrying $\Filt(\mu_{0}^{-1})$ to $\Filt(\mu_{x}^{-1})$. The composite of the
last two isomorphisms is an element of $\mathcal{F}(\mathbb{C})$.
\end{proof}

Let $\MF_{B(\mathbb{F})}$ denote the category of weakly admissible filtered
$B(\mathbb{F})$-modules (see \ref{a13}),\footnote{Thus an object of
$\MF_{B(\mathbb{F})}$ is an $F$-isocrystal over $\mathbb{F}$ together with a
finite exhaustive separated decreasing filtration that admits a strongly
divisible $W$-lattice.} and let $\MF_{W(\mathbb{F})}$ denote the category
$\underline{\mathrm{MF}}_{\mathrm{tf}}$ of \cite{fontaine1983}, 2.1. Thus an
object of $\MF_{W(\mathbb{F})}$ is a finitely generated $W(\mathbb{F})$-module
$\Lambda$ together with

\begin{enumerate}
\item a finite exhaustive separated decreasing filtration
\[
\cdots\supset\Filt^{i}\Lambda\supset\Filt^{i+1}\Lambda\supset\cdots
\]
by submodules that are direct summands of $\Lambda$, and

\item a family of $\sigma$-linear maps $\varphi_{\Lambda}^{i}\colon
\Filt^{i}\Lambda\rightarrow\Lambda$ such that $\varphi_{\Lambda}%
^{i}(x)=p\varphi_{\Lambda}^{i+1}(x)$ for $x\in\Filt^{i+1}\Lambda$ and
$\sum_{i}\im\varphi_{\Lambda}^{i}=\Lambda$.
\end{enumerate}

\noindent Let $\Lambda$ be an object of $\MF_{W(\mathbb{F})}$. Then
$\Lambda_{B(\mathbb{F})}$ is an object of $\MF_{B(\mathbb{F})}$, and the image
of $\Lambda$ in $\Lambda_{B(\mathbb{F})}$ is a strongly divisible lattice such
that $\Filt^{i}\Lambda\overset{\textup{{\tiny def}}}{=}\Lambda\cap\Filt^{i}N$
is a direct summand of $\Lambda$ for all $i$. To give an object of
$\MF_{W(\mathbb{F})}$ that is torsion-free as a $W$-module is the same as
giving an object $(N,F,\Filt^{i})$ of $\MF_{B(\mathbb{F})}$ together with a
strongly divisible lattice $\Lambda$ such that $\Lambda\cap\Filt^{i}N$ is a
direct summand of $\Lambda$ for all $i$. The category $\MF_{W(\mathbb{F})}$ is
a $\mathbb{Z}_{p}$-linear abelian category.

The functor $\omega_{\mathrm{dR}}\colon\Mot(B(\mathbb{F}))\rightarrow
\Mod_{B(\mathbb{F})}$ has a canonical factorization into
\[
\begin{CD}
\Mot(B(\mathbb{F}))@>\omega_{\mathrm{crys}}>>{\mathsf{R}_p}@>\text{\textrm{forget}}>>\Mod_{
B(\mathbb{F})};
\end{CD}
\]
and it even factors through $\MF_{B(\mathbb{F})}$ on a large subcategory of
$\Mot(B(\mathbb{F}))$.

\begin{definition}
\label{a28}A \emph{crystalline} $p$\emph{-integral structure\/} on a
$G$-motive $M$ over $B(\mathbb{F})$ is an exact tensor functor $\underline
{\Lambda}_{\mathrm{crys}}\colon\Rep_{\mathbb{Z}_{p}}(G)\rightarrow
\MF_{W(\mathbb{F})}$ such that $\underline{\Lambda}_{\mathrm{crys}}\left(
\xi\right)  $ is a $W(\mathbb{F})$-lattice in $(\omega_{\mathrm{dR}}\circ
M(p))(\xi_{\mathbb{Q}_{p}})$ for all $\xi$ in $\Rep_{\mathbb{Z}_{p}}(G_{p})$.
\end{definition}

The condition means that $\underline{\Lambda}_{\mathrm{crys}}\left(
\xi\right)  \subset(\omega_{\mathrm{dR}}\circ M(p))(\xi_{\mathbb{Q}_{p}})$ for
all $\xi$, and there is a commutative diagram%
\[
\xymatrixcolsep{4pc}\xymatrix{
\Rep_{B(\mathbb{F})}(G)\ar[r]^-{B(\mathbb{F})\otimes M}
&B(\mathbb{F})\otimes\Mot(B(\mathbb{F}))
\ar[r]^-{\omega_{\mathrm{crys}}} &\MF_{B(\mathbb{F})} \\
\Rep_{\mathbb{Z}_{p}}(G)\ar[u]\ar[rr]^{\underline{\Lambda}_{\mathrm{crys}}}
&  & \MF_{W}\ar[u].
}
\]

Recall that for any filtered module $N$, there is a canonical splitting
$\mu_{W}$ of the filtration on $N$, and that $\mu_{W}$ splits the filtration
on any strongly divisible submodule of $N$ (\cite{wintenberger1984}).

\begin{lemma}
\label{a29} Let $\underline{\Lambda}_{\mathrm{crys}}$ be a $p$-integral
crystalline structure on an admissible $G$-motive $M$ over $B(\mathbb{F})$.
Then there exists an isomorphism of tensor functors $W\otimes_{\mathbb{Z}_{p}%
}\Lambda\rightarrow\underline{\Lambda}_{\mathrm{crys}}$ carrying $\mu_{0}%
^{-1}$ into $\mu_{W}$.
\end{lemma}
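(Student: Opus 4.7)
The strategy is to construct the desired isomorphism through three successive refinements. First, forgetting the filtration and Frobenius structure on $\MF_{W(\mathbb{F})}$, the crystalline $p$-integral structure $\underline{\Lambda}_{\mathrm{crys}}$ is just an exact tensor functor $\Rep_{\mathbb{Z}_{p}}(G)\rightarrow\Mod_{W(\mathbb{F})}$. Since $G_{W(\mathbb{F})}$ is reductive, $W(\mathbb{F})$ is henselian, and its residue field $\mathbb{F}$ is algebraically closed (hence of dimension $\leq 1$), Proposition \ref{a1} supplies a tensor isomorphism $\eta_{0}\colon W\otimes_{\mathbb{Z}_{p}}\Lambda\rightarrow\underline{\Lambda}_{\mathrm{crys}}$ of functors to $\Mod_{W(\mathbb{F})}$.

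Next I would arrange that $\eta_{0}$ carries the filtration $\Filt(\mu_{0}^{-1})$ on $W\otimes\Lambda$ to the $\MF_{W(\mathbb{F})}$-filtration on $\underline{\Lambda}_{\mathrm{crys}}$. After extending scalars to $B(\mathbb{F})$, admissibility of $M$ together with Lemma \ref{a27} imply that the pullback via $\eta_{0}$ of the filtration on $B(\mathbb{F})\otimes\underline{\Lambda}_{\mathrm{crys}}$ lies in the $G(B(\mathbb{F}))$-conjugacy class of $\Filt(\mu_{0}^{-1})$. By \ref{a11}, $\mu_{0}$ is defined over $W(\mathbb{F})$ and the stabilizing parabolic $P\subset G_{W(\mathbb{F})}$ is smooth, as is the flag variety $G/P$. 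The pullback filtration thus gives a $W(\mathbb{F})$-point of $G/P$ whose reduction mod $p$ is $G(\mathbb{F})$-conjugate to the base point (all $\mathbb{F}$-points of the smooth projective orbit are conjugate since $\mathbb{F}$ is algebraically closed), and Hensel's lemma applied to $G$ lifts this to an element $g_{0}\in G(W(\mathbb{F}))$ conjugating one filtration to the other. Replacing $\eta_{0}$ by $\eta_{0}\circ g_{0}$ produces $\eta_{1}\colon W\otimes\Lambda\rightarrow\underline{\Lambda}_{\mathrm{crys}}$ intertwining the two filtrations.

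Finally, transport $\mu_{W}$ to a cocharacter of $G_{W(\mathbb{F})}=\underline{\Aut}^{\otimes}(W\otimes\Lambda)$ via $\eta_{1}$. Because $\eta_{1}$ preserves the filtration, both $\mu_{0}^{-1}$ and $\eta_{1}^{-1}(\mu_{W})$ split $\Filt(\mu_{0}^{-1})$ on $W\otimes\Lambda$, so both lie in $P$ and each picks out a Levi factor. Any two Levi factors of a parabolic group scheme over $W(\mathbb{F})$ are conjugate by a unique element $u$ of the unipotent radical $U$ of $P$, and $u\in U(W(\mathbb{F}))\subset G(W(\mathbb{F}))$. Replacing $\eta_{1}$ by $\eta_{1}\circ\Ad(u^{-1})$ gives the desired tensor isomorphism sending $\mu_{0}^{-1}$ to $\mu_{W}$.

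The main obstacle is the second step: ensuring that the adjustment by $g_{0}$ exists integrally rather than merely over $B(\mathbb{F})$. The key inputs are admissibility of $M$ and Lemma \ref{a27}, which identify the de Rham conjugacy class; the smoothness of the flag variety $G/P$; and Hensel's lemma together with $H^{1}(\mathbb{F},G_{\mathbb{F}})=0$ (via Steinberg), which together promote the conjugation over $B(\mathbb{F})$ to one over $W(\mathbb{F})$. Once that is in hand, the third refinement is purely a matter of Levi-factor uniqueness inside a parabolic.
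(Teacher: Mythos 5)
Your overall strategy (match the filtrations integrally first, then adjust the splitting by the unipotent radical over $W(\mathbb{F})$) is a genuinely different route from the paper's, which never matches filtrations integrally: the paper transports Wintenberger's canonical splitting $\mu_{W}$ back through an arbitrary integral isomorphism $\alpha$ from Proposition \ref{a1}, shows the resulting cocharacter $\mu'$ is $G(B(\mathbb{F}))$-conjugate to $\mu_{0}^{-1}$ using Lemma \ref{a27} plus a unipotent adjustment over the \emph{field} $B(\mathbb{F})$, and then upgrades this to $G(W(\mathbb{F}))$-conjugacy by purely group-theoretic means: conjugacy of maximal split tori over $W$ (SGA3 XII 7.1 over a henselian base) and the fact, recorded in \ref{a11}, that the hyperspecial vertex forces the Weyl group of $S$ to be represented in $G(W(\mathbb{F}))$. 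Your third step (two splittings of the same filtration are conjugate by a unique element of the unipotent radical over $W$) is fine and plays the role the paper's unipotent adjustment plays over $B(\mathbb{F})$.

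The gap is in your second step, and it is the pivotal one. You assert that the pullback under $\eta_{0}$ of the $\MF_{W(\mathbb{F})}$-filtration ``thus gives a $W(\mathbb{F})$-point of $G/P$.'' Nothing you have said justifies this: Lemma \ref{a27} and admissibility only control the generic fibre, so you only know the filtration is of type $\mu_{0}^{-1}$ after inverting $p$; being a $W$-point of $G/P$ means it is (étale-locally on all of $\Spec W$, in particular on the special fibre) split of that type, which is exactly what has to be proved. Your parenthetical justification presupposes that the reduction mod $p$ already lies in $(G/P)(\mathbb{F})$, so the argument is circular at the decisive moment. Moreover, ``Hensel's lemma applied to $G$'' is the wrong mechanism: lifting a mod-$p$ conjugator to some element of $G(W(\mathbb{F}))$ gives no control over whether the lift carries one integral filtration to the other; what one needs is that the transporter is a torsor under the smooth connected group $P$, whose triviality over the henselian ring $W(\mathbb{F})$ with algebraically closed residue field then produces $g_{0}$ --- and torsor-ness again requires knowing the filtration is a point of $G/P$. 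The step can in fact be repaired along your lines, but by a different argument: the generic-fibre point of $G/P$ supplied by Lemma \ref{a27} extends to a $W(\mathbb{F})$-point by properness of $G/P$ (valuative criterion), and the direct-summand filtration attached to this extension coincides with the given one because a direct summand of a free $W(\mathbb{F})$-module is saturated and hence determined by its generic fibre; only then do smoothness of $P$ and the vanishing of $H^{1}$ over $W(\mathbb{F})$ give $g_{0}\in G(W(\mathbb{F}))$. As written, however, the proposal omits the idea that makes the integral step work, which is precisely the difficulty the paper's torus/Weyl-group argument is designed to circumvent.
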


\begin{proof}
It follows from (\ref{a1}) that there exists an isomorphism $\alpha\colon
W\otimes_{\mathbb{Z}_{p}}\Lambda\rightarrow\underline{\Lambda}_{\mathrm{crys}%
}$, uniquely determined up to composition with an element of $G(W)$. Let
$\mu^{\prime}$ be the cocharacter of $G_{W}$ mapped by $\alpha$ to $\mu_{W}$.
We have to show that $\mu^{\prime}$ is $G(W)$-conjugate to $\mu_{0}^{-1}$.

According to Lemma \ref{a27}, there exists an isomorphism $\beta\colon
B(\mathbb{F})\otimes_{\mathbb{Z}_{p}}\Lambda\rightarrow B(\mathbb{F}%
)\otimes_{W}\underline{\Lambda}_{\mathrm{crys}}$ carrying $\Filt(\mu_{0}%
^{-1})$ into $\Filt(\mu_{W})$. After possibly replacing $\beta$ with its
composite with an element of (a unipotent subgroup) of $G(B(\mathbb{F}))$ ---
see, for example, \cite{milne1990}, 1.7, --- we may assume that $\beta$ maps
$\mu_{0}^{-1}$ to $\mu_{W}$. Since $\beta$ can differ from $B(\mathbb{F}%
)\otimes_{W}\alpha$ only by an element of $G(B(\mathbb{F}))$, this shows that
$\mu^{\prime}$ is $G(B(\mathbb{F}))$-conjugate to $\mu_{0}^{-1}$.

Let $T$ be a maximal (split) torus of $G_{W}$ containing the image of
$\mu^{\prime}$. From its definition (see \ref{a11}), we know $\mu_{0}$ factors
through a specific torus $S\subset G_{W}$. According to \cite{demazureG1964},
XII 7.1, $T$ and $S$ will be conjugate locally for the \'{e}tale topology on
$\Spec W$, which in our case means that they are conjugate by an element of
$G(W)$. We may therefore suppose that $\mu^{\prime}$ and $\mu_{0}$ both factor
through $S$. But two characters of $S$ are $G(B(\mathbb{F}))$-conjugate if and
only if they are conjugate by an element of the Weyl group --- see for example
\cite{milne1992}, 1.7, --- and, because the hyperspecial point fixed by $G(W)$
lies in the apartment corresponding $S$, $G(W)$ contains a set of
representatives for the Weyl group (see \ref{a11}). Thus $\mu^{\prime}$ is
conjugate to $\mu_{0}^{-1}$ by an element of $G(W)$.
\end{proof}

\subsection{The integral comparison conjecture.}

Recall that, for an abelian variety over a field $k$ of characteristic zero,
$\mathcal{B}(A)$ denotes the $\mathbb{Q}$-algebra of Hodge classes on $A$
(absolute Hodge classes in the sense of \cite{deligne1982}). A Hodge tensor on
$A$ is an element of $\bigoplus\nolimits_{n\geq0}\mathcal{B}{}(A^{n})$. A
Hodge class on $A$ is a family $\gamma=(\gamma_{l})_{l}$ with $\gamma_{l}\in
H^{2\ast}(A_{k^{\mathrm{al}}},\mathbb{Q}_{l}(\ast))$ for $l\neq\infty$ and
$\gamma_{\infty}\in H_{\mathrm{dR}}^{2\ast}(A)(\ast)$.

Let $A$ be an abelian variety over a finite extension $K$ of $B(\mathbb{F})$
contained in $(\mathbb{Q}^{\mathrm{al}})_{v}$, and suppose that $K$ is
sufficiently large that $\mathcal{B}(A)\simeq\mathcal{B}(A_{K^{\mathrm{al}}}%
)$. Then%
\[
\mathcal{B}_{(p)}(A)\overset{\textup{{\tiny def}}}{=}\mathcal{B}(A)\cap
H^{2\ast}(A_{K^{\mathrm{al}}},\mathbb{Z}_{p})(\ast)
\]
is a $\mathbb{Z}_{(p)}$-lattice in $\mathcal{B}(A)$.

\begin{icconjecture}
\label{a30}If $A$ has good reduction, and so extends to an abelian scheme
$\mathcal{A}$ over $\mathcal{O}_{K}$, then, for $\gamma\in\mathcal{B}{}(A)$,%
\begin{equation}
\gamma_{p}\in H^{2\ast}(A_{k^{\mathrm{al}}},\mathbb{\mathbb{Z}{}}_{p}%
(\ast))\implies\gamma_{\mathrm{\infty}}\in H_{\mathrm{dR}}^{2\ast}%
(\mathcal{A})(\ast). \label{e5}%
\end{equation}

\end{icconjecture}

The following statement was conjectured in \cite{milne1995}.\footnote{More
precisely, \cite{milne1995}, Conjecture 0.1, p1, reads: \bquote Let $A$ be an
abelian scheme over $W(\mathbb{F})$. Let $\mathfrak{s}=(s_{i})_{i\in I}$ be a
family of Hodge tensors on $A$ including a polarization, and, for some fixed
inclusion $\tau\colon W(\mathbb{F})\hookrightarrow\mathbb{C}$, let $G$ be the
subgroup of $\GL(H^{1}((\tau A)(\mathbb{C}),\mathbb{Q}))$ fixing the $s_{i}$.
Assume that $G$ is reductive, and that the Zariski closure of $G$ in
$\GL(H^{1}(A_{B(\mathbb{F}{})},\mathbb{Z}_{p}))$ is hyperspecial. Then, for
some faithfully flat $\mathbb{Z}_{p}$-algebra $R$, there exists an isomorphism
of $W$-modules
\[
R\otimes_{\mathbb{Z}_{p}}H^{1}(A_{B(\mathbb{F})},\mathbb{Z}_{p})\rightarrow
R\otimes_{W}H_{\mathrm{dR}}^{1}(A)
\]
mapping the \'{e}tale component of each $s_{i}$ to the de Rham component. (In
fact, if there exists such an isomorphism for some faithfully flat
$\mathbb{Z}{}_{p}$-algebra, then there exists an isomorphism with
$R=W(\mathbb{F}{})$.)\equote}

\begin{theorem}
\label{a30a}Let $K$ be a finite extension of $B(\mathbb{F}{})$, and let $A$ be
an abelian variety over $B(\mathbb{F}{})$ with good reduction (so $A$ extends
to an abelian scheme over $\mathcal{O}{}_{K}$). Let $(s_{i})_{i\in I}$ be a
family of Hodge tensors for $A$ that are tensors for $H^{1}(A,\mathbb{Z}{}%
_{p})$ and that define a reductive subgroup of $\GL(H^{1}(A,\mathbb{Z}{}%
_{p}))$. Then there exists an isomorphism of $W(\mathbb{F}{})$-modules%
\[
W(\mathbb{F}{})\otimes_{\mathbb{Z}{}_{p}}H^{1}(A,\mathbb{Z}{}_{p})\rightarrow
H_{\mathrm{dR}}^{1}(\mathcal{A}{})
\]
mapping the \'{e}tale component of each $s_{i}$ to the de Rham component.
\end{theorem}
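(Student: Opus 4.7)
The plan is to phrase the desired statement as the triviality of a $G_{W(\mathbb{F})}$-torsor and then apply Steinberg's theorem in the form of Proposition~\ref{a1}. Consider the functor $P$ on $W(\mathbb{F})$-algebras whose value on $R$ is the set of $R$-linear isomorphisms
\[
R\otimes_{\mathbb{Z}_p}H^1(A,\mathbb{Z}_p)\longrightarrow R\otimes_{W(\mathbb{F})}H^1_{\mathrm{dR}}(\mathcal{A})
\]
that carry each \'{e}tale component $s_i^{\mathrm{et}}$ to the corresponding de Rham component $s_i^{\mathrm{dR}}$. Since $G$ is by hypothesis the stabiliser of $(s_i)$ in $\GL(H^1(A,\mathbb{Z}_p))$, the functor $P$ is representable by a closed subscheme of the relevant $\underline{\Isom}$-scheme, and the action of $G_{W(\mathbb{F})}$ by precomposition on the source makes $P$ a pseudo-torsor for $G_{W(\mathbb{F})}$.

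The first and crucial step is to show that $P$ is in fact a torsor, i.e., that it admits a point after some faithfully flat base change of $W(\mathbb{F})$. Rationally, this is supplied by the de Rham/\'{e}tale comparison isomorphism of Faltings together with the compatibility of that comparison with absolute Hodge cycles on abelian varieties (Blasius), which gives a $B(\mathbb{F})$-point of $P$. Integrally, the input comes from the announced results of Vasiu and Kisin on abelian varieties equipped with tensors: to the pair $(A,(s_i))$ with reductive stabiliser they attach a Breuil--Kisin module $\mathfrak{M}$ carrying tensors $\tilde{s}_i$ from which both $H^1(A,\mathbb{Z}_p)$ and $H^1_{\mathrm{dR}}(\mathcal{A})$, with their respective tensors, are recovered by functorial operations. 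After a suitable faithfully flat extension of $W(\mathbb{F})$, this identification yields the required point of $P$.

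With $P$ known to be a $G_{W(\mathbb{F})}$-torsor, the proof concludes by Proposition~\ref{a1}: the ring $W(\mathbb{F})$ is a henselian discrete valuation ring with algebraically closed residue field $\mathbb{F}$ (hence of dimension $\leq 1$), and $G_{W(\mathbb{F})}$ is reductive by hypothesis, so every such torsor is trivial. A $W(\mathbb{F})$-trivialisation of $P$ is exactly an isomorphism of the asserted kind.

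The main obstacle is unambiguously the middle step: the existence of a tensor-preserving integral comparison is not formal and requires the full strength of Kisin's (or Vasiu's) theory of Breuil--Kisin modules with tensors for abelian schemes of Hodge type. The torsor-theoretic set-up of the first paragraph and the Steinberg vanishing of the third are purely standard; the substantive content of Theorem~\ref{a30a} is the repackaging of this integral $p$-adic Hodge-theoretic input into the form needed for the rest of the paper.
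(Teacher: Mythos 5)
Your argument is correct and in substance the same as the paper's: the paper's proof of Theorem \ref{a30a} consists precisely of the citation to \cite{kisin2009}, 1.3.6, or \cite{vasiu2003}, which is exactly the tensor-preserving integral comparison you invoke as your crucial middle step. The torsor set-up and the appeal to Steinberg's theorem via Proposition \ref{a1} that you wrap around that input are just the internal mechanics of the cited result, so your route does not differ in any essential way.
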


\begin{proof}
Apply \cite{kisin2009}, 1.3.6, or \cite{vasiu2003}.
\end{proof}

Note that Theorem \ref{a30a} implies that the Hodge tensors $s_{i}$ in the
statement satisfy (\ref{e5}).

\subsection{Construction of the map $\mathcal{M}(W(\mathbb{F}))\rightarrow
\mathcal{L}$}

Define $\Mot(W(\mathbb{F}))$ (respectively\ $\Mot_{(p)}(W(\mathbb{F}))$) to be
the tannakian subcategory of $\Mot(B(\mathbb{F}))$ (respectively\ $\Mot_{(p)}%
(B(\mathbb{F}))$) generated by abelian varieties over $B(\mathbb{F})$ with
potential good reduction. Note that Conjecture \ref{a30} implies that there is
an exact tensor functor $\omega_{\mathrm{dR}}\colon\Mot_{(p)}(W(\mathbb{F}%
))\rightarrow\Mod_{W(\mathbb{F})}$.

\begin{definition}
\label{a31}An admissible $G$-motive $M$ over $B(\mathbb{F})$ is \emph{special}
if it takes values in $\Mot(W(\mathbb{F}))$ and there exists an exact tensor
functor $M^{\prime}\colon\Rep(G)\rightarrow\CM(\mathbb{Q}^{\text{\textrm{al}}%
})$ making the following diagram commute:%
\[
\xymatrix{
& \CM(\mathbb{Q}^{\text{al}}) \ar[r] & \Mot(B(\mathbb{F})^{\mathrm{al}})\\
\Rep(G) \ar[r]^{M}\ar[ru]^{M^{\prime}} & \Mot(W(\mathbb{F})) \ar[r]^{\subset} & \Mot(B(\mathbb{F}))\ar[u]\\
}
\]

\end{definition}

We say that an equivalence class $m$ of admissible $G$-motives over
$B(\mathbb{F})$ is \emph{special} if it contains a special $G$-motive $M$, in
which case we let $\mathcal{M}(m)=\mathcal{M}(M^{\prime})$. We define
$\mathcal{M}(G,X)(W(\mathbb{F}))$ to be the subset $\bigsqcup_{m\text{
special}}\mathcal{M}(m)$ of $\mathcal{M}(G,X)(B(\mathbb{F}))$.

\begin{proposition}
\label{a32}Assume the integral comparison conjecture. Then there is a
canonical equivariant map%
\[
\mathcal{M}(G,X)(W(\mathbb{F}))\rightarrow\mathcal{L}(G,X).
\]
The map becomes surjective when we omit the $p$-component; that is, when we
replace $\mathcal{L}{}(G,X)$ with its quotient%
\[
\mathcal{\mathcal{L}{}}^{p}(G,X)\overset{\textup{{\tiny def}}}{=}%
\bigsqcup\nolimits_{m}\mathcal{L}{}^{p}(m),\quad\mathcal{\mathcal{L}{}}%
^{p}(m)\overset{\textup{{\tiny def}}}{=}\varprojlim_{M\in m}{}%
\mathcal{\mathcal{L}{}}^{p}(M),\quad\mathcal{\mathcal{L}{}}^{p}%
(M)=I(M)\backslash(X^{p}(M)/Z^{p}).
\]

\end{proposition}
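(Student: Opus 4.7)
The plan is to define the map class by class. Given a special equivalence class $m$ in $\mathcal{M}(G,X)(W(\mathbb{F}))$, I pick a representative admissible $G$-motive $M$ taking values in $\Mot(W(\mathbb{F}))$ together with a lift $M'\colon\Rep(G)\to\CM(\mathbb{Q}^{\mathrm{al}})$ as in Definition \ref{a31}, and form the special $G$-motive $\bar M:=R\circ M'$ over $\mathbb{F}$. I then construct a canonical $G(\mathbb{A}_f^p)\times Z(\mathbb{Q}_p)$-equivariant map $\mathcal{M}(M')\to\mathcal{L}(\bar M)$; independence of the choices of $M$ and $M'$, up to the canonical identifications defining $\mathcal{M}(m)$ and $\mathcal{L}(R\circ M')$, then lets me pass to the respective projective limits. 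Functoriality in $(G,X)$ is automatic from the functoriality of each ingredient.

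For the prime-to-$p$ component, the argument is direct. Since $M'$ factors through $\CM(\mathbb{Q}^{\mathrm{al}})$ and every CM motive has good reduction at $v$, smooth and proper base change furnishes a canonical isomorphism of fibre functors $\omega_\ell\circ M'\simeq\omega_\ell^{\mathbb{F}}\circ\bar M$ for each $\ell\neq p,\infty$, hence a canonical bijection $X^p(M')\xrightarrow{\sim}X^p(\bar M)$ compatible with the $G(\mathbb{A}_f^p)$-action. Composing with the projection to the $Z^p$-quotient gives the $X^p$-component of the desired map.

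The $X_p$-component is where the integral comparison conjecture enters, and this is the main obstacle. Fix a defining pair $(\xi_0,(t_i)_{i\in I})$ for $G_{\mathbb{Z}_p}$. An étale $p$-integral structure $\underline{\Lambda}_p$ on $M'$ corresponds by Proposition \ref{a2} to a $\Gamma$-stable $\mathbb{Z}_p$-lattice $L\subset\omega_p(M'(\xi_0))$ carrying each $t_i$ to $\omega_p(M'(t_i))$. Applied to the underlying CM abelian variety and its family of Hodge tensors, Theorem \ref{a30a} produces a $W(\mathbb{F})$-lattice $L_{\mathrm{crys}}\subset\omega_{\mathrm{dR}}(M'(\xi_0))$ and an isomorphism $W(\mathbb{F})\otimes_{\mathbb{Z}_p}L\simeq L_{\mathrm{crys}}$ carrying the étale components of the tensors to the de Rham components. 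By construction $L_{\mathrm{crys}}$ is strongly divisible for the Hodge filtration; Lemmas \ref{a27} and \ref{a29} identify this filtration, after choice of trivialisation, with the one defined by $\mu_0^{-1}$. Identifying $\omega_{\mathrm{dR}}\circ M'$ with $B(\mathbb{F})\otimes_{W(\mathbb{F})}\omega_p^{\mathbb{F}}\circ\bar M$ via the crystalline--de~Rham comparison on the reduction, Proposition \ref{a2}(a) together with Remark \ref{a13}(b) promotes $L_{\mathrm{crys}}$ (viewed as a strongly divisible lattice inside $\omega_p^{\mathbb{F}}(\bar M(\xi_0))$) to a $p$-integral structure $\underline{\Lambda}$ on $\bar M$. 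Equivariance under $I(M')$ and $Z(\mathbb{Q}_p)$ follows from the functoriality of the construction.

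Finally, for surjectivity of the composite $\mathcal{M}(G,X)(W(\mathbb{F}))\to\mathcal{L}^p(G,X)$, let $[\bar M,\eta^p]\in\mathcal{L}^p(G,X)$ with $\bar M=R\circ M_x$ for a special $x\in X$. Then $M_x$ itself is a special $G$-motive with values in $\CM(\mathbb{Q}^{\mathrm{al}})$, and its base change defines a special admissible $G$-motive $M'$ over $B(\mathbb{F})$ taking values in $\Mot(W(\mathbb{F}))$. The canonical bijection $X^p(\bar M)\simeq X^p(M')$ transfers $\eta^p$ to the $B(\mathbb{F})$-side, and pairing with any étale $p$-integral structure on $M'$ (which exists: the Mumford--Tate group at a special point is a torus, so there is a $T$-stable $\mathbb{Z}_p$-lattice in $V$, and Lemma \ref{a23} packages this into a functor) gives a class in $\mathcal{M}(G,X)(W(\mathbb{F}))$ whose image in $\mathcal{L}^p(G,X)$ is $[\bar M,\eta^p]$.
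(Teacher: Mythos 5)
The overall skeleton of your argument (lift to $\CM(\mathbb{Q}^{\mathrm{al}})$ and compose with $R$ to get $\bar M$; prime-to-$p$ component via the specialization isomorphism in \'etale cohomology; surjectivity onto $\mathcal{L}^{p}$ from the bijectivity of $X^{p}(M)\rightarrow X^{p}(\bar M)$) is the paper's. The genuine problem is your construction of the $p$-component. You never use the stated hypothesis (the integral comparison conjecture); instead you invoke Theorem \ref{a30a}. But Theorem \ref{a30a} is a statement about an abelian variety $A$, its specific lattice $H^{1}(A,\mathbb{Z}_{p})$, and Hodge tensors on that lattice; in the setting of the proposition, $M'(\xi_{0})$ is a general abelian (CM) motive --- a summand of a twisted tensor power of $h^{1}$ --- the tensors $t_{i}$ live on it, and the lattice $L=\underline{\Lambda}_{p}(\xi_{0})$ is an arbitrary Galois-stable lattice, none of which is covered by that theorem. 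Your substitution works only when $(G,X)$ is of Hodge type and $\xi_{0}$ is chosen through a Siegel embedding, so that $M'(\xi_{0})$ is $h^{1}$ of an abelian variety and (after replacing $A$ by an isogenous variety) $L=H^{1}(A,\mathbb{Z}_{p})$ with the $t_{i}$ integral; that is exactly the content of Remark \ref{a34}, which the paper states separately from the Proposition. In the generality of the Proposition, the assumed conjecture is precisely what supplies the exact tensor functor $\omega_{\mathrm{dR}}\colon\Mot_{(p)}(W(\mathbb{F}))\rightarrow\Mod_{W(\mathbb{F})}$; the paper's proof then simply composes the functor $\Rep(G)\rightarrow\Mot_{(p)}(W(\mathbb{F}))$ determined by $(M,\underline{\Lambda}_{p})$ with $\omega_{\mathrm{dR}}$ and checks, via Lemmas \ref{a27} and \ref{a29}, that the output satisfies condition (b) of Definition \ref{a12}. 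Your argument has no substitute for this step outside the Hodge-type case.

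Two further points. Even in the Hodge-type case, Theorem \ref{a30a} compares $H^{1}(A,\mathbb{Z}_{p})$ with $H_{\mathrm{dR}}^{1}(\mathcal{A})$, not the given lattice $L$ with anything: you must first move $A$ within its isogeny class so that $L$ becomes $H^{1}(A',\mathbb{Z}_{p})$ and check the tensors remain $\mathbb{Z}_{p}$-integral and define a reductive model (this follows from the exactness of $\underline{\Lambda}_{p}$ via Proposition \ref{a2}, but it is not automatic and is not said); also the isomorphism in Theorem \ref{a30a} is only an isomorphism of $W(\mathbb{F})$-modules matching tensors, so strong divisibility must be taken from $H_{\mathrm{dR}}^{1}(\mathcal{A})$ itself rather than from a transported lattice. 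Finally, in your surjectivity argument, a $T$-stable $\mathbb{Z}_{p}$-lattice in $V$ does not yield an \'etale $p$-integral structure: one needs the Galois image to stabilize a lattice-with-tensors lying in the $G(\mathbb{Q}_{p})$-orbit of the standard one, i.e.\ to land in a conjugate of $G(\mathbb{Z}_{p})$, which is not what Lemma \ref{a23} provides. The paper avoids this by defining $\mathcal{M}(m)$ for a special class through the CM lift $M'$ over $\mathbb{Q}^{\mathrm{al}}$, where the Galois condition is vacuous, so that the second statement really is immediate from the definitions together with the bijectivity of $X^{p}(M)\rightarrow X^{p}(\bar M)$.
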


\begin{proof}
Let $M$ be special. The composite%
\[
\Rep(G)\overset{M^{\prime}}{\longrightarrow}\CM(\mathbb{Q}^{\text{al}%
})\overset{R}{\longrightarrow}\Mot(\mathbb{F})
\]
is a special $G$-motive $\bar{M}$ over $\mathbb{F}$. Let $\eta\in X^{p}(M)$,
and let $\underline{\Lambda}_{p}\in X_{p}(M)$. Then $(M,\underline{\Lambda
}_{p})$ defines an exact tensor functor $\Rep(G)\rightarrow\Mot_{(p)}%
(W(\mathbb{F}{}))$, whose composite with $\omega_{\mathrm{dR}}$ is an element
of $X_{p}(\bar{M})$ (because of \ref{a27} and \ref{a29}). The specialization
map in \'{e}tale cohomology defines an isomorphism $\omega_{f}^{p}%
(M)\rightarrow\omega_{f}^{p}(\bar{M})$, and so $\eta$ defines an element of
$X^{p}(\bar{M})$. Therefore, we have a map $\mathcal{M}(M)\rightarrow
\mathcal{M}(\bar{M})$. On combining these maps for different $M$, we obtain a
map $\mathcal{M}(W(\mathbb{F}))\rightarrow\mathcal{L}$. The second statement
is obvious from the definitions.
\end{proof}

\begin{question}
\label{a33}Is there an elementary proof that every admissible $G$-motive over
$B(\mathbb{F})$ that takes values in $\Mot(W(\mathbb{F}))$ is special? Perhaps
this can be deduced from an analogue of Proposition \ref{a19}. A positive
answer would give an elementary proof of the special-points conjecture (see
the proof of Theorem \ref{a46}).
\end{question}

\begin{remark}
\label{a34}When $(G,X)$ is of Hodge type, then Proposition \ref{a32} follows
from Theorem \ref{a30a}, i.e., it is not necessary to assume the integral
comparison conjecture. To see this, choose a defining representation and
tensors for $G$.
\end{remark}

\section{Proof of Conjecture LR+ for certain Shimura varieties}

Let $(G,X)$ be a Shimura $p$-datum satisfying (SV1-4,6), and let $\Sh_{p}$ be
a canonical integral model. Consider the diagram\begin{equation}\begin{gathered}
\bfig
\node a(0,500)[\mathcal{M}(W(\mathbb{F}))]
\node b(300,500)[\subset]
\node c(600,500)[\mathcal{M}(B(\mathbb{F}))]
\node d(1300,500)[\Sh_{p}(B(\mathbb{F}))]
\node e(1600,500)[\simeq]
\node f(1900,500)[\Sh_{p}(W(\mathbb{F}))]
\node g(0,0)[\mathcal{L}]
\node h(1900,0)[\Sh_{p}(\mathbb{F})]
\arrow/{}/[a`b;{}]
\arrow/{}/[b`c;{}]
\arrow[c`d;\simeq]
\arrow/{}/[d`e;{}]
\arrow/{}/[e`f;{}]
\arrow[a`g;{}]
\arrow/{-->}/[g`h;\textrm{?}]
\arrow/{->>}/[f`h;{}]
\efig
\end{gathered}\label{e3}\end{equation}

\noindent in which the isomorphism on the top row is the map $\alpha
(B(\mathbb{F}{}))$ of Theorem \ref{a25}. We say that Conjecture LR holds for
$(G,X)$ if there exists an equivariant isomorphism $\mathcal{L}\rightarrow
\Sh_{p}(\mathbb{F})$ making the diagram commute.

\subsection{\noindent Siegel modular varieties}

Let $(G(\psi),X(\psi))$ be the Shimura datum attached to a symplectic space
$(V,\psi)$ over $\mathbb{Q}$. Thus $G(\psi)=\GSp(\psi)$ and $X(\psi)$ consists
of the Hodge structures $h$ on $V_{\mathbb{R}}$ for which $(x,y)\mapsto
\psi(x,h(i)y)$ is definite (either positive or negative). For any
$\mathbb{Z}_{(p)}$-lattice $\Lambda$ in $V$ such that $\psi$ restricts to a
perfect $\mathbb{Z}_{(p)}$-valued pairing on $\Lambda$, the subgroup $G$ of
$G(\psi)$ stabilizing $\Lambda$ is a reductive group over $\mathbb{Z}{}_{(p)}$
with generic fibre $G(\psi)$. Thus $(G,X(\psi))$ is a Shimura $p$-datum. Any
Shimura $p$-datum arising in this way will be called a \emph{Siegel }%
$p$\emph{-datum}$.$ In this subsection, we prove Conjecture LR for Siegel $p$-data.

\begin{lemma}
\label{a37}Let $(A,\lambda)$ be a polarized motive over $\mathbb{F}$. For any
$\mathbb{Q}$-algebra $R$, let $H(R)$ be the group of automorphisms of $A$ as
an object of $\Mot(\mathbb{F})_{(R)}$ fixing $\lambda$. Then $H$ is an
algebraic group over $\mathbb{Q}$ satisfying the Hasse principle for $H^{1}$.
\end{lemma}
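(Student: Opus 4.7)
The plan is to exhibit $H$ explicitly as the isometry group of a semisimple $\mathbb{Q}$-algebra with positive involution, and then invoke the classical Hasse principles for such groups.

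First I would establish that $H$ is an algebraic group. Since $\Mot(\mathbb{F})$ is semisimple tannakian over $\mathbb{Q}$ with fundamental group $P$, the endomorphism algebra $B \overset{\textup{def}}{=} \End(A)$ is a finite-dimensional semisimple $\mathbb{Q}$-algebra. The polarization $\lambda\colon A\otimes A\to \1(m)$ is a nondegenerate form (symmetric or antisymmetric up to sign), and defines a $\mathbb{Q}$-linear involution $b\mapsto b^{\ast}$ on $B$ by $\lambda(bx,y)=\lambda(x,b^{\ast}y)$. Then
\[
H(R)=\{b\in(B\otimes_{\mathbb{Q}}R)^{\times}\mid b\cdot b^{\ast}=1\},
\]
which is a closed subgroup scheme of $\GL_{1}(B)$ defined by polynomial equations, hence an affine algebraic group over $\mathbb{Q}$. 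Positivity of $\lambda$ (built into the notion of polarization of a Tate triple, cf.\ \cite{deligneM1982}, \cite{milne2002}) ensures that $\ast$ is a positive involution on $B$, so $H$ is reductive.

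Next I would reduce the Hasse principle for $H$ to the Hasse principle for classical groups. Decompose $B=\prod_{i}B_{i}$ into simple factors. The involution $\ast$ either preserves a given factor $B_{i}$ or permutes it with some $B_{j}$. Accordingly, $H$ splits as a product
\[
H\simeq\prod H_{i}\times\prod H_{jj'},
\]
where $H_{i}$ is the unitary group of $(B_{i},\ast)$ in case (i) and $H_{jj'}\simeq B_{j}^{\times}\simeq \Res_{Z_{j}/\mathbb{Q}}\GL_{n_{j}}$ (for $Z_{j}$ the centre of $B_{j}$) in case (ii). For the factors of type (ii), Hilbert 90 plus Shapiro's lemma give $H^{1}(\mathbb{Q},H_{jj'})=0$, so the Hasse principle is trivial. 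For the factors of type (i), the positivity of $\ast$ forces each $(B_{i},\ast)$ to fall into Albert's classification: the associated $H_{i}$ becomes a form of a symplectic group, an orthogonal (or special orthogonal) group, or a unitary group attached to a hermitian form over a CM extension of a totally real field.

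For each of these classical types, the Hasse principle for $H^{1}$ is known: Hasse--Minkowski for the orthogonal case (classifying quadratic forms), Landherr's theorem for the unitary case (classifying hermitian forms over a CM extension), and the symplectic case is trivial since all symplectic spaces of a given rank are isomorphic over any field. The main point where care is needed is verifying that the group we obtain from a positive involution really is the full isometry group (rather than just the special isometry group, where Hasse principle can fail for type $D$); this is automatic from the definition of $H$, since we are fixing $\lambda$ on the nose, not up to determinant. Assembling the factors via the product structure then yields the Hasse principle for $H^{1}(\mathbb{Q},H)$.
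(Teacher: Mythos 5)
Your overall strategy (realize $H$ as the unitary group of $\End(A)$ with the positive involution defined by $\lambda$, split into simple factors, and check the Hasse principle factor by factor) runs parallel to the paper's, but the key step is not justified as written. The classical theorems you quote --- Hasse--Minkowski, Landherr, and ``the symplectic case is trivial since all symplectic spaces of a given rank are isomorphic'' --- apply to quadratic, hermitian and alternating forms \emph{over fields}, i.e.\ to the case where each simple factor $B_i$ is split over its centre. For motives over $\mathbb{F}$ this is exactly what fails: when the centre of an isotypic factor is $\mathbb{Q}$ (the ``supersingular'' case) the algebra is a matrix algebra over the quaternion algebra ramified at $p$ and $\infty$, so $H_i$ is a non-split inner form of $\SP_{2n}$ whose $H^{1}$ classifies quaternionic hermitian forms --- nothing about this is ``trivial''; and when the centre is a CM field the algebra is in general a matrix algebra over a non-split division algebra with involution of the second kind (invariants at the primes over $p$), so Landherr's theorem on hermitian forms over a CM extension does not apply as cited. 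Your inclusion of an orthogonal factor is likewise only covered by Hasse--Minkowski when the algebra is split; in the quaternionic (Albert type II) case, which your appeal to Albert's classification allows, the claim is unsupported (and the Hasse principle for skew-hermitian quaternionic forms is genuinely delicate). A smaller point: positivity of the involution forces it to preserve every simple factor (if it swapped $B_j$ and $B_{j'}$ then $\Tr(bb^{\ast})=0$ for $b=(x,0)$), so your case (ii) is vacuous; and in any case $B_j^{\times}$ is $\GL_{1}(B_j)$, not $\Res_{Z_j/\mathbb{Q}}\GL_{n_j}$, unless $B_j$ is split (the vanishing of $H^{1}$ would still follow from Hilbert 90 for $\GL_{1}$ of a central simple algebra).

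The paper avoids these problems by using the specific structure of $\End(A)$ for an isotypic object of $\Mot(\mathbb{F})$: the centre $E$ is either $\mathbb{Q}$ or a CM field (this is the citation of earlier work, and it is what rules out the problematic orthogonal/type $D$ possibilities). In the first case $H$ becomes the symplectic group over $\mathbb{Q}^{\mathrm{al}}$, hence is semisimple and simply connected, and the Hasse principle is the Kneser--Harder--Chernousov theorem --- which is also the correct substitute for your ``trivial'' symplectic step in the non-split situation. In the CM case the paper does not invoke a classification of hermitian forms at all: it writes $H=\Res_{F/\mathbb{Q}}H'$ with $F$ the maximal totally real subfield of $E$, notes that the reduced norm gives $H'\rightarrow\mathbb{G}_{m}$ with simply connected kernel (a form of $\SL_{n}$), and deduces the Hasse principle for $H'$ from that for the derived group together with a diagram chase using density of $F^{\times}$ in $\prod_{v\mid\infty}F_{v}^{\times}$ and openness of the image of $\prod_{v\mid\infty}H'(F_{v})$. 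To repair your argument you would need either to prove the Hasse principle for hermitian forms over the relevant \emph{division} algebras (second kind over CM, and quaternionic at $p,\infty$), or to make the same reduction to the simply connected case that the paper makes; and you need the structural input that only these two types of factors occur.
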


\begin{proof}
We may assume that $A$ is isotypic. Let $L=\End(A)$, let $E$ be the centre of
$L$, and let $^{\dagger}$ denote the involution of $L$ defined by $\lambda$.
For any $\mathbb{Q}$-algebra $R,$%
\[
H(R)=\{a\in L\otimes_{\mathbb{Q}{}}R\mid aa^{\dagger}=1\}.
\]
There are two cases to consider: (a) $E=\mathbb{Q}$; (b) $E$ is a CM-field
(cf.\ \cite{milne1994}, 2.16). Choose a fibre functor $\bar{\omega}$ over
$\mathbb{Q}^{\mathrm{al}}$, and let $V=\bar{\omega}(A)$. In case (a),
$\End(A)_{\mathbb{Q}^{\mathrm{al}}}=\End(V)$ and $H_{\mathbb{Q}^{\mathrm{al}}%
}$ is the symplectic group attached to the alternating form on $V$ defined by
$\lambda$. Hence $H$ is a simply connected semisimple group, and so it
satisfies the Hasse principle for $H^{1}$. In case (b), $V$ decomposes into a
direct sum of spaces $W\oplus W^{\vee}$. Correspondingly, $H_{\mathbb{Q}%
^{\mathrm{al}}}$ decomposes as a product of groups $G_{W}$, where
$G_{W}=\End(W)\oplus\End(W^{\vee})$. Moreover, $(a,b)^{\dagger}%
=(b^{\mathrm{tr}},a)$, and so $G_{W}\simeq\GL_{W}$. From this, one deduces
that $H=\Res_{F/\mathbb{Q}}H^{\prime}$ where $H^{\prime}$ is an algebraic
group over the largest totally real subfield $F$ of $E$; the reduce norm
defines a homomorphism $H^{\prime}\rightarrow\mathbb{G}_{m}$ and the kernel is
a form of $\SL_{n}$ for some $n$. From the diagram%
\[
\minCDarrowwidth25pt\begin{CD}
@.F^{\times} @>>> H^{1}(F,H^{\prime\mathrm{der}}) @>>> H^{1}(F,H^{\prime})
@>>> 0\\
@.@V{\text{\phantom{g}dense}}V{\text{image}}V@VV{\simeq}V@VVV\\
\prod\limits_{v|\infty}H^{\prime}(F_{v}) @>{\text{open}}>{\text{image}}>
\prod\limits_{v|\infty}F_{v}^{\times} @>>> \prod\limits_{v|\infty}H^{1}%
(F_{v},H^{\prime\mathrm{der}}) @>>> \prod\limits_{v}H^{1}(F_{v},H^{\prime
})
\end{CD}
\]
we see that $H^{\prime}$ satisfies the Hasse principle over $F$, and so $H$
satisfies the Hasse principle over $\mathbb{Q}$.
\end{proof}

\begin{lemma}
\label{a38}Let $(A,\lambda)$ and $(A^{\prime},\lambda^{\prime})$ be polarized
CM abelian varieties over $\mathbb{Q}^{\mathrm{al}}$. If there exists an
isogeny $A_{0}^{\prime}\rightarrow A_{0}$ sending $\lambda_{0}^{\prime}$ to
$\lambda_{0}$, then there exists an isomorphism $\hbar^{1}(A)\rightarrow
\hbar^{1}(A^{\prime})$ sending $\hbar(\lambda)$ to $\hbar(\lambda^{\prime})$.
\end{lemma}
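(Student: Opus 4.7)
The plan is to invoke the Hasse principle provided by Lemma~\ref{a37}. Consider the functor
\[
\mathcal{I}(R)=\{\text{isomorphisms }\hbar^{1}(A)\rightarrow\hbar^{1}(A^{\prime})\text{ in }\Mot(\mathbb{F})_{(R)}\text{ sending }\hbar(\lambda)\text{ to }\hbar(\lambda^{\prime})\}
\]
on $\mathbb{Q}$-algebras $R$. Where nonempty, composition endows $\mathcal{I}$ with the structure of a right torsor under $H$, the algebraic group attached to $(\hbar^{1}(A),\hbar(\lambda))$ in Lemma~\ref{a37}. Since that lemma shows $H$ satisfies the Hasse principle for $H^{1}$, the torsor $\mathcal{I}$ is trivial over $\mathbb{Q}$ provided it admits a $\mathbb{Q}_{l}$-point at every place $l$, including $l=\infty$.

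I would verify local triviality place-by-place by transporting the given polarized isogeny $f\colon A_{0}^{\prime}\to A_{0}$ through the appropriate fibre functor on $\Mot(\mathbb{F})$. For $\ell\neq p,\infty$, the compatibility $\omega_{\ell}^{\mathbb{F}}\circ R=\xi_{\ell}$ together with smooth base change identifies $\omega_{\ell}^{\mathbb{F}}(\hbar^{1}(A))$ with $H_{\mathrm{\acute{e}t}}^{1}(A_{0},\mathbb{Q}_{\ell})$, and carries $\hbar(\lambda)$ to the Weil pairing coming from $\lambda_{0}$; likewise for $A^{\prime}$. Hence $f$ furnishes an isomorphism of the $\ell$-adic polarized realizations, which by tannakian formalism (the realization of the $H$-torsor $\mathcal{I}$ over $\mathbb{Q}_{\ell}$ is an $\omega_{\ell}^{\mathbb{F}}(H)$-torsor and its triviality corresponds to that of $\mathcal{I}$) gives a $\mathbb{Q}_{\ell}$-point of $\mathcal{I}$. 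The case $l=p$ is parallel: $\omega_{p}^{\mathbb{F}}(\hbar^{1}(A))$ identifies with the $F$-isocrystal $H_{\mathrm{crys}}^{1}(A_{0})\otimes_{W(\mathbb{F})}B(\mathbb{F})$ (and similarly for $A^{\prime}$), and $f$ induces a polarization-preserving iso of $F$-isocrystals.

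The case $l=\infty$ is more delicate because $\omega_{\infty}^{\mathbb{F}}$ (through $\zeta_{\infty}\circ R=\xi_{\infty}$) factors through the Hodge-theoretic data of $A$ itself, not merely that of $A_{0}$. Here I use the CM hypothesis: both $A$ and $A^{\prime}$ are of CM-type, so their polarized real Hodge structures are rigidly controlled by CM data in the Serre group $S$. Under the Shimura-Taniyama homomorphism $r^{\mathrm{ST}}\colon P\to S$ (Section~2.3), the common Weil number of the polarized isogenous reductions $(A_{0},\lambda_{0})\sim(A_{0}^{\prime},\lambda_{0}^{\prime})$ pins down the CM types of $A,A^{\prime}$ (up to complex conjugation) compatibly with the polarizations. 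A polarized iso of the real Hodge realizations then follows, giving the desired $\mathbb{R}$-point of $\mathcal{I}$.

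The main obstacle is the $l=\infty$ step: the hypothesis is a statement about the reductions over $\mathbb{F}$, whereas the $\infty$-realization probes the characteristic-zero Hodge structures of $A,A^{\prime}$. Bridging this gap uses the rigidity of CM abelian varieties and the Shimura-Taniyama compatibility built into the construction of $\CM(\mathbb{Q}^{\mathrm{al}})$ and $\Mot(\mathbb{F})$ in Section~2; without CM, no such bridge would be available. The finite-prime cases, by contrast, follow essentially formally from the polarized isogeny $f$ together with the compatibility of the reduction functor $R$ with the $\ell$-adic and crystalline realizations.
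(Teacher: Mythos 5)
Your global strategy (make the set of polarized isomorphisms $\hbar^{1}(A)\rightarrow\hbar^{1}(A^{\prime})$ into an $H$-torsor and apply the Hasse principle of Lemma \ref{a37}) is exactly the paper's, and your treatment of the finite places is essentially right in outline, but with one caveat: "by tannakian formalism" is not a justification, since a fibre functor is faithful but not full, so an isomorphism of $\ell$-adic (or crystalline) realizations does not automatically come from an isomorphism in $\Mot(\mathbb{F})_{(\mathbb{Q}_{\ell})}$. What makes the argument work is that $\zeta_{\ell}$ (resp.\ $\zeta_{p}$) is fully faithful into the category of realizations \emph{equipped with a germ of Frobenius}; since the isogeny $A_{0}^{\prime}\rightarrow A_{0}$ commutes with Frobenius, its cohomological image is a morphism in that enriched category and hence descends to $\Mot(\mathbb{F})_{(\mathbb{Q}_{\ell})}$. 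This full-faithfulness is a substantive property of the constructed category and must be invoked explicitly.

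The genuine gap is at $l=\infty$. Your claim that the common Weil number of the isogenous reductions "pins down the CM types of $A,A^{\prime}$ (up to complex conjugation) compatibly with the polarizations" is false: the Shimura--Taniyama homomorphism determines the Weil number from the CM type, not conversely, and quite different CM lifts can have isogenous reductions. More importantly, even a polarized isomorphism of the real Hodge structures of the lifts $A$ and $A^{\prime}$ is not what is needed; what is needed is an $\mathbb{R}$-point of the torsor, i.e.\ an isomorphism in $\Mot(\mathbb{F})_{(\mathbb{R})}$, which via the fully faithful $\zeta_{\infty}$ means an isomorphism of the $\infty$-realizations of the \emph{reduced} motives compatible with the germ of Frobenius and carrying $\hbar(\lambda)$ to $\hbar(\lambda^{\prime})$. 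The only datum you have is the isogeny $f$ over $\mathbb{F}$, and your argument supplies no mechanism to realize $f$ at the archimedean place, where there is no classical cohomology of $A_{0}$. The paper bridges this by passing through the Lefschetz-motive categories $\LCM(\mathbb{Q}^{\mathrm{al}})$ and $\LMot(\mathbb{F})$: each carries a canonical polarization for which geometric polarizations are positive, the quotient map preserves them, and the resulting polarizations define functors to $\mathsf{R}_{\infty}$ chosen compatibly with $\zeta_{\infty}$; since $f$ is a morphism in $\LMot(\mathbb{F})$, its image under this functor is the required polarization- and Frobenius-compatible isomorphism in $\mathsf{R}_{\infty}$, giving $\mathcal{F}(\mathbb{R})\neq\emptyset$. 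Without some substitute for this positivity/Lefschetz argument, your archimedean step does not go through.
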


\begin{proof}
For a $\mathbb{Q}$-algebra $R$, let $\mathcal{F}(R)$ be the set of
isomorphisms from $\hbar^{1}A$ to $\hbar^{1}A^{\prime}$ in the category
$\Mot(\mathbb{F})_{(R)}$ sending $\hbar\lambda$ to $\hbar\lambda^{\prime}$. We
have to show that $\mathcal{F}(\mathbb{Q})$ is nonempty. Clearly $\mathcal{F}$
is a torsor for the algebraic group $H$ in Lemma \ref{a37}, and so it suffices
to show that $\mathcal{F}(\mathbb{Q}_{l})$ is nonempty for all $l$.

Let $l$ be a prime $\neq p,\infty$, and let $\mathsf{R}_{l}^{\prime}$ be the
category of finite-dimensional $\mathbb{Q}_{l}$-vector spaces equipped with a
germ of a Frobenius map$.$ Then $\zeta_{l}$ defines a fully faithful functor
$\Mot(\mathbb{F})_{(\mathbb{Q}_{l})}\rightarrow\mathsf{R}_{l}^{\prime}$
(cf.\ \cite{milne1994}, 3.7). Moreover, $\zeta_{l}(\hbar^{1}A)=\xi_{l}%
(h^{1}A)$ (by definition), and $\xi_{l}(h^{1}A)=H_{\mathrm{et}}^{1}%
(A,\mathbb{Q}_{l})\simeq H_{\mathrm{et}}^{1}(A_{0},\mathbb{Q}_{l})$ (as
objects of $\mathsf{R}_{l}^{\prime}$). An isogeny $A_{0}^{\prime}\rightarrow
A_{0}$ sending $\lambda_{0}^{\prime}$ to $\lambda_{0}^{\prime}$ defines an
isomorphism $H_{\mathrm{et}}^{1}(A_{0},\mathbb{Q}_{l})\rightarrow
H_{\mathrm{et}}^{1}(A_{0}^{\prime},\mathbb{Q}_{l})$ sending $H_{\mathrm{et}%
}^{1}(\lambda_{0})$ to $H_{\mathrm{et}}^{1}(\lambda_{0}^{\prime})$, and hence
an element of $\mathcal{F}(\mathbb{Q}_{l})$.

For $p$, we define $\mathsf{R}_{p}^{\prime}$ to the category of finite
dimensional vector spaces $V$ over $\mathbb{Q}_{p}^{\mathrm{un}}$ equipped
with a $\sigma$-linear isomorphism $V\rightarrow V$. Then $\zeta_{l}$ defines
a fully faithful functor $\Mot(\mathbb{F})_{(\mathbb{Q}_{p})}\rightarrow
\mathsf{R}_{p}^{\prime}$. The same argument as in the last paragraph shows
that $\mathcal{F}(\mathbb{Q}_{p})$ is nonempty.

For $\infty$, we need to use the categories of Lefschetz motives
$\LCM(\mathbb{Q}^{\text{al}})$ and $\LMot(\mathbb{F})$ (see \cite{milne1999lm}%
). Each of these categories has a canonical polarization for which the
geometric polarizations are positive, and the quotient map $\LCM(\mathbb{Q}%
^{\text{al}})\rightarrow\LMot(\mathbb{F})$ preserves the polarizations (see
\cite{milne2002}, 3.7). These polarizations define (isomorphism classes of)
functors from $\LCM(\mathbb{Q}^{\text{al}})$ and $\LMot(\mathbb{F})$ to
$\mathsf{R}_{\infty}$ (see \cite{deligneM1982}, 5.20). We can choose the
functor on $\LCM(\mathbb{Q}^{\text{al}})$ to be the composite $\LCM(\mathbb{Q}%
^{\text{al}})\rightarrow\CM(\mathbb{Q}^{\mathrm{al}})\rightarrow
\mathsf{R}_{\infty}$; then we can choose the functor on $\LMot(\mathbb{F})$ to
be compatible, via the quotient map, with that on $\LCM(\mathbb{Q}^{\text{al}%
})$. The functor $\zeta_{\infty}$ defines a fully faithful functor from
$\Mot(\mathbb{F})_{(\mathbb{R})}$ to the category of objects of $\mathsf{R}%
_{\infty}$ equipped with a germ of a Frobenius operator (cf.\ \cite{milne1994}%
, 3.6). Now, the same argument as before shows that $\mathcal{F}(\mathbb{R})$
is nonempty.
\end{proof}

\begin{theorem}
\label{a39}Let $(G,X)$ be a Siegel $p$-datum. The map $\mathcal{M}%
(W(\mathbb{F}))\rightarrow\Sh_{p}(\mathbb{F})$ of (\ref{e3}) induces an
isomorphism $\mathcal{L}\rightarrow$ $\Sh_{p}(\mathbb{F})$.
\end{theorem}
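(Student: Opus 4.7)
The approach is to leverage the moduli interpretation of $\Sh_p$ for the Siegel datum, together with the polarized Hasse principle of Lemmas \ref{a37}--\ref{a38} and Zink's special-points theorem for PEL data, to identify $\mathcal{L}(G,X)$ with $\Sh_p(\mathbb{F})$ explicitly. Concretely, because $(G,X)$ comes from a self-dual $\mathbb{Z}_{(p)}$-lattice $(\Lambda,\psi)$, the canonical integral model $\Sh_p$ parametrizes principally polarized abelian varieties $(\bar A,\bar\lambda)$ over $\mathbb{F}$ equipped with a prime-to-$p$ symplectic level structure and a hyperspecial structure at $p$; this interpretation is used throughout.

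The first step is to construct a map $\alpha\colon\mathcal{L}\to\Sh_p(\mathbb{F})$ directly. For a special equivalence class $m$ with representative $M$ over $\mathbb{F}$ lifted from a polarized CM class $M'$ over $\mathbb{Q}^{\mathrm{al}}$, the image under $M$ of the defining symplectic representation is $\hbar^1$ of a polarized abelian variety $(A_0,\lambda_0)$ over $\mathbb{F}$, namely the reduction of a polarized CM abelian variety. Given $[(\eta^p,\underline{\Lambda})]\in\mathcal{L}(M)$, I would use $\eta^p$ to specify a prime-to-$p$ symplectic level structure on $(A_0,\lambda_0)$ and use $\underline{\Lambda}$, combined with Lemma \ref{a29} and the integral comparison conjecture (exactly as in Proposition \ref{a32}, and unconditionally here by Remark \ref{a34}), to select a hyperspecial $W(\mathbb{F})$-lattice in the crystalline realization. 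The resulting quadruple is the sought-for point of $\Sh_p(\mathbb{F})$. Passing to the limit over $M\in m$ and taking the disjoint union over special classes yields $\alpha$; equivariance for $G(\mathbb{A}_f^p)\times Z(\mathbb{Q}_p)$ and commutativity of diagram (\ref{e3}) are then immediate from the construction.

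Surjectivity of $\alpha$ is the content of the special-points conjecture for Siegel data: every principally polarized abelian variety over $\mathbb{F}$, with its hyperspecial and prime-to-$p$ level data, is isogenous to the reduction of a polarized CM abelian variety over a finite extension of $B(\mathbb{F})$. This is Zink's theorem \cite{zink1983}, 2.7. Injectivity reduces to the following assertion: if the moduli data attached to two special $G$-motives $M,M'$ become identified in $\Sh_p(\mathbb{F})$, then their classes agree already in $\mathcal{L}$. Lemma \ref{a38} furnishes an isomorphism $\hbar^1(A)\to\hbar^1(A')$ of polarized motives over $\mathbb{F}$, after which Lemma \ref{a37} (Hasse principle for the automorphism group of a polarized motive) promotes the collection of local identifications supplied by the level data to a global isomorphism of $G$-motives, exhibiting the required element of $I(M)$ that conjugates one orbit representative to the other.

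The main obstacle will be the bookkeeping showing that the $I(M)$-indeterminacy on the $\mathcal{L}$ side matches the automorphism indeterminacy in the moduli problem at every place, most delicately at $p$: one must verify that two crystalline $p$-integral structures differing by an element of $G(W(\mathbb{F}))$ determine the same hyperspecial lattice in $H^1_{\mathrm{crys}}(A_0)$ up to automorphisms of $(A_0,\lambda_0)$, which is where Lemma \ref{a29} and the presence of Weyl representatives in $G(W(\mathbb{F}))$ (see \ref{a11}) come in. Once this compatibility is in place, $\alpha$ is the desired equivariant bijection and diagram (\ref{e3}) commutes by construction.
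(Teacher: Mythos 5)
Your overall skeleton (Zink's theorem for surjectivity, Lemma \ref{a38} plus the Hasse principle for the identification of classes) matches the paper's, but there is a genuine gap at the very first step: the direct construction of $\alpha\colon\mathcal{L}\rightarrow\Sh_{p}(\mathbb{F})$ ``by the moduli interpretation.'' The category $\Mot(\mathbb{F})$ is an abstract quotient of $\CM(\mathbb{Q}^{\mathrm{al}})$; it is \emph{not} known to contain the isogeny category of abelian varieties over $\mathbb{F}$, and the Hom groups $\Hom(RX,RY)=\omega_{0}(\underline{\Hom}(X,Y)^{P})$ are not known to coincide with actual quasi-isogenies of the reductions. Consequently an element of $\mathcal{L}(M)=I(M)\backslash(X^{p}(M)/Z^{p})\times X_{p}(M)$, where $I(M)=\Aut^{\otimes}(M)$ is computed inside $\Mot(\mathbb{F})$, does not obviously determine a point of the moduli space: you must show that the left action of $I(M)$ matches the indeterminacy coming from genuine quasi-isogenies of $(A_{0},\lambda_{0})$ together with the deformation-theoretic indeterminacy at $p$. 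You flag this as ``bookkeeping'' in your last paragraph, but it is precisely the content of the theorem, and Lemma \ref{a29} and the Weyl-group representatives of (\ref{a11}) do not by themselves supply it.

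The paper avoids this by never defining the map directly on $\mathcal{L}$. It fixes one polarized CM abelian variety, forms the corresponding $(G,w,t)$-motive $M$ over $B(\mathbb{F})$, and compares the two maps $\mathcal{M}(M)\rightarrow\mathcal{L}$ and $\mathcal{M}(M)\rightarrow\Sh_{p}(\mathbb{F})$ emanating from the honest characteristic-zero object $\mathcal{M}(M)$: the key assertion is that these two maps have the \emph{same fibres}, and this uses Lemma \ref{a38} together with deformation theory starting from the Serre--Tate theorem (Norman's results) --- an ingredient entirely absent from your proposal, and the one that controls the $p$-adic part of the comparison. Only then does the induced bijection ${}^{\prime}\mathcal{L}\rightarrow{}^{\prime}\Sh_{p}(\mathbb{F})$ on the images get extended to an injection on all of $\mathcal{L}$, via the surjectivity of $\mathcal{M}(W(\mathbb{F}))\rightarrow\mathcal{L}^{p}$ from the second part of Proposition \ref{a32}; surjectivity onto $\Sh_{p}(\mathbb{F})$ then comes from Zink as you say. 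To repair your argument you would need either to prove that $\Mot(\mathbb{F})$ receives the isogeny category of abelian varieties over $\mathbb{F}$ compatibly with automorphism groups (which is essentially the rationality/Tate conjecture territory the paper is at pains to avoid), or to reorganize the proof around the fibre comparison as the paper does.
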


\begin{proof}
The group $G$ has a natural Tate triple structure $(w,t)$, and we prefer to
work with $(G,w,t)$-objects (see p\pageref{a8}). Now fix a polarized CM
abelian variety $(A,\lambda)$ over $\mathbb{Q}^{\mathrm{al}}$, and let $M$ be
the corresponding $(G,w,t)$-motive over $B(\mathbb{F})$ (see \ref{a8}). It
follows from Lemma \ref{a38} and deformation theory (starting from the
Serre-Tate theorem; cf \cite{norman1981}) that the fibres of the maps
$\mathcal{M}(M)\rightarrow\mathcal{L}$ and $\mathcal{M}(M)\rightarrow
\Sh_{p}(\mathbb{F})$ are equal. Therefore, we get a commutative diagram%
\[
\begin{CD}
\mathcal{M}(M) @>{\textrm{bijection}}>>{}^{\prime}\Sh_{p}(B(\mathbb{F}))\\
@VV{\mathrm{onto}}V@VV{\mathrm{onto}}V\\
{}^{\prime}\mathcal{L}@>{\mathrm{bijection}}>>{}^{\prime}\Sh_{p}(\mathbb{F}{})%
\end{CD}
\]
where a prime denotes the image of $\mathcal{M}{}(M)$. Using the second
statement of Proposition \ref{a32}, one sees that the lower arrow extends
canonically to an injection $\mathcal{L}{}\rightarrow\Sh_{p}(\mathbb{F}{})$,
which is a bijection by Zink's theorem (\cite{zink1983}, 2.7).
\end{proof}

\subsection{Shimura subvarieties}

Recall that a map $(G,X)\rightarrow(G^{\prime},X^{\prime})$ of Shimura data
defines a morphism $\Sh(G,X)\rightarrow\Sh(G^{\prime},X^{\prime})$ of Shimura
varieties over $\mathbb{C}$. Deligne (1971, 1.15) shows that the morphism is a
closed immersion if $G\rightarrow G^{\prime}$ is injective.

His argument proves a similar statement for Shimura $p$-data.\footnote{This is
written out, for example, in \cite{giguere1998}, 2.1.9.1 and in
\cite{kisin2009}, 2.1.2.} In particular, if $(G,X)\rightarrow(G^{\prime
},X^{\prime})$ is a map of Shimura $p$-data such that $G\rightarrow G^{\prime
}$ is injective (as a map of group schemes over $\mathbb{Z}{}_{(p)}$), then
$\Sh_{p}(G,X)(B(\mathbb{F}{}))\rightarrow$ $\Sh_{p}(G^{\prime},X^{\prime
})(B(\mathbb{F}{}))$ is injective. Similarly, $\mathcal{L}{}(G,X)\rightarrow
\mathcal{L}{}(G^{\prime},X^{\prime})$ is injective. Write $?^{\prime}$ for
$?(G^{\prime},X^{\prime})$. For suitable integral models of the Shimura
varieties, there will be a homomorphism of diagrams%
\[
\begin{CD}
\mathcal{M}(W(\mathbb{F}))@>{\subset}>>\Sh_{p}(W(\mathbb{F}))\\
@VVV@VVV\\
\mathcal{L}@.\Sh_{p}(\mathbb{F})
\end{CD}
\quad\longrightarrow\quad\begin{CD}
\mathcal{M}^{\prime}(W(\mathbb{F}))@>{\subset}>>\Sh_{p}^{\prime}(W(\mathbb{F}))\\
@VVV@VVV\\
\mathcal{L}^{\prime}@.\Sh_{p}^{\prime}(\mathbb{F}).
\end{CD}
\]

\begin{proposition}
\label{a40}Assume that the map $\Sh_{p}(\mathbb{F}{})\rightarrow
\Sh_{p}^{\prime}(\mathbb{F}{})$ is injective. If the map $\mathcal{M}%
{}^{\prime}(W(\mathbb{F}{}))\rightarrow\Sh_{p}^{\prime}(\mathbb{F}{})$ factors
through $^{\prime}\mathcal{L}{}^{\prime}$ and defines a bijection $^{\prime
}\mathcal{L}{}^{\prime}\rightarrow{}^{\prime}\Sh_{p}^{\prime}(\mathbb{F}{})$,
then the same statement is true for $\mathcal{\mathcal{M}{}}(W(\mathbb{F}%
{}))\rightarrow\Sh_{p}(\mathbb{F}{})$.
\end{proposition}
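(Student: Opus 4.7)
The plan is a pure diagram chase resting on three inputs: (i) the standing hypothesis that $\Sh_{p}(\mathbb{F})\hookrightarrow\Sh_{p}^{\prime}(\mathbb{F})$; (ii) the injectivity $\mathcal{L}(G,X)\hookrightarrow\mathcal{L}(G^{\prime},X^{\prime})$ recorded in the paragraph immediately preceding the proposition; and (iii) the functoriality in $(G,X)$ of $\mathcal{M}$, of $\mathcal{L}$, of $\Sh_{p}$, and of the natural map $\mathcal{M}(W(\mathbb{F}))\to\Sh_{p}(\mathbb{F})$. Input (iii) assembles into the two commutative squares
\[
\begin{CD}
\mathcal{M}(W(\mathbb{F})) @>>> \Sh_{p}(\mathbb{F})\\
@VVV @VVV\\
\mathcal{M}^{\prime}(W(\mathbb{F})) @>>> \Sh_{p}^{\prime}(\mathbb{F})
\end{CD}
\qquad\text{and}\qquad
\begin{CD}
\mathcal{M}(W(\mathbb{F})) @>>> \mathcal{L}\\
@VVV @VVV\\
\mathcal{M}^{\prime}(W(\mathbb{F})) @>>> \mathcal{L}^{\prime}
\end{CD}
\]
whose right-hand vertical arrows are injective by (i) and (ii) respectively. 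All arrows are equivariant for $G(\mathbb{A}_{f}^{p})\times Z(\mathbb{Q}_{p})$ (and for the Frobenius), so the equivariance part of the conclusion will be automatic.

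The whole proposition reduces to the claim that, for any $x,y\in\mathcal{M}(W(\mathbb{F}))$, the images of $x$ and $y$ in $\mathcal{L}$ agree if and only if their images in $\Sh_{p}(\mathbb{F})$ agree. Granting this: the ``$\Rightarrow$'' direction says exactly that $\mathcal{M}(W(\mathbb{F}))\to\Sh_{p}(\mathbb{F})$ factors through $\mathcal{L}$ (hence through the image ${}^{\prime}\mathcal{L}$); and the ``$\Leftarrow$'' direction says that the induced map ${}^{\prime}\mathcal{L}\to{}^{\prime}\Sh_{p}(\mathbb{F})$ is injective, hence bijective, surjectivity being built into the definition of ${}^{\prime}\Sh_{p}(\mathbb{F})$.

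The two directions of the claim are symmetric diagram chases through the squares above. For ``$\Rightarrow$'', suppose $x,y$ share an image in $\mathcal{L}$: by commutativity of the right-hand square they share an image in $\mathcal{L}^{\prime}$, a fortiori in ${}^{\prime}\mathcal{L}^{\prime}$; the assumed bijection ${}^{\prime}\mathcal{L}^{\prime}\xrightarrow{\sim}{}^{\prime}\Sh_{p}^{\prime}(\mathbb{F})$ then forces agreement in $\Sh_{p}^{\prime}(\mathbb{F})$, and input (i) pulls this back to agreement in $\Sh_{p}(\mathbb{F})$. For ``$\Leftarrow$'' one runs the chain in reverse: commutativity of the left-hand square gives agreement in $\Sh_{p}^{\prime}(\mathbb{F})$, the \emph{inverse} of the assumed bijection gives agreement in ${}^{\prime}\mathcal{L}^{\prime}$ and hence in $\mathcal{L}^{\prime}$, and input (ii) concludes in $\mathcal{L}$.

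The whole argument is entirely formal; the substantive content is fully packaged in inputs (i) and (ii), both of which are granted. I therefore anticipate no serious obstacle beyond careful diagram bookkeeping, the only mildly delicate point being to confirm that the map $\mathcal{M}(W(\mathbb{F}))\to\Sh_{p}(\mathbb{F})$ constructed from (\ref{e3}) is genuinely natural in $(G,X)$, which is built into the constructions of Proposition \ref{a32} and of the canonical integral model.
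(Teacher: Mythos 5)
Your proof is correct and is exactly the argument the paper leaves implicit: its proof reads ``obvious from the above statements,'' meaning precisely the diagram chase you carry out using the injectivity of $\Sh_{p}(\mathbb{F})\rightarrow\Sh_{p}^{\prime}(\mathbb{F})$, the injectivity of $\mathcal{L}\rightarrow\mathcal{L}^{\prime}$, and the commutativity of the homomorphism of diagrams recorded just before the proposition. Your reduction to the two-way comparison of fibres (images agree in $\mathcal{L}$ iff they agree in $\Sh_{p}(\mathbb{F})$) is a clean way of making that explicit, so there is nothing to add.
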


\begin{proof}
This is obvious from the above statements.
\end{proof}

\subsection{Shimura $p$-data of Hodge type}

We say that a Shimura $p$-datum $(G,X)$ is of \emph{Hodge type} if there
exists a Siegel $p$-datum $(G^{\prime},X^{\prime})$ and a map
$(G,X)\rightarrow(G^{\prime},X^{\prime})$ with $G\rightarrow G^{\prime}$
injective. We let $\Sh_{p}$ and $\Sh_{p}^{\prime}$ denote the canonical
integral models.

\begin{theorem}
\label{a44}Let $(G,X)$ be a Shimura $p$-datum of Hodge type, and assume that
the map $\Sh_{p}(\mathbb{F}{})\rightarrow\Sh_{p}^{\prime}(\mathbb{F}{})$ is
injective for some Siegel embedding. Then the map $\mathcal{\mathcal{M}%
}(W(\mathbb{F}))\rightarrow\Sh_{p}(\mathbb{F})$ of (\ref{e3}), p\pageref{e3},
defines an injection $\mathcal{L}\rightarrow$ $\Sh_{p}(\mathbb{F})$, which is
surjective if the special-points conjecture holds for $(G,X)$.
\end{theorem}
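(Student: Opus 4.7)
My plan is to deduce the theorem from the Siegel case (Theorem \ref{a39}) by transferring through the given Hodge embedding $(G,X)\hookrightarrow (G',X')$ via Proposition \ref{a40}, and then to extend from the image of $\mathcal{M}(W(\mathbb{F}))$ to all of $\mathcal{L}$ using the same mechanism that appears inside the proof of Theorem \ref{a39}.

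Writing $?'$ for $?(G',X')$, Theorem \ref{a39} produces the bijection $\mathcal{L}'\to \Sh_p'(\mathbb{F})$ induced by $\mathcal{M}'(W(\mathbb{F}))\to \Sh_p'(\mathbb{F})$. The hypothesis that $\Sh_p(\mathbb{F})\to \Sh_p'(\mathbb{F})$ is injective then lets me invoke Proposition \ref{a40}: the map $\mathcal{M}(W(\mathbb{F}))\to \Sh_p(\mathbb{F})$ factors through the image ${}'\mathcal{L}$ of $\mathcal{M}(W(\mathbb{F}))\to \mathcal{L}$ and induces a bijection ${}'\mathcal{L}\to {}'\Sh_p(\mathbb{F})$. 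To extend this partial map to all of $\mathcal{L}$, I would appeal to the second statement of Proposition \ref{a32}: the composite $\mathcal{M}(W(\mathbb{F}))\to \mathcal{L}\twoheadrightarrow \mathcal{L}^p$ is surjective, so every fibre of $\mathcal{L}\to \mathcal{L}^p$ meets ${}'\mathcal{L}$, and the $Z(\mathbb{Q}_p)$- and $\Phi$-actions on these fibres (available compatibly on $\Sh_p(\mathbb{F})$ through the canonical integral model) force a unique equivariant extension $\mathcal{L}\to \Sh_p(\mathbb{F})$. Injectivity of the extension is then immediate from the commutative square
\[
\begin{CD}
\mathcal{L} @>>> \Sh_p(\mathbb{F})\\
@VVV @VV{\text{inj}}V\\
\mathcal{L}' @>{\sim}>> \Sh_p'(\mathbb{F}),
\end{CD}
\]
whose left vertical arrow is injective by functoriality of $\mathcal{L}$ along $G\hookrightarrow G'$.

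For the surjectivity statement, assume the special-points conjecture. Given $s\in \Sh_p(\mathbb{F})$, after possibly replacing $s$ by an isogenous point, lift it through $\Sh_p(W(\mathbb{F}))\twoheadrightarrow \Sh_p(\mathbb{F})$ to a special point $\tilde s$. By Theorem \ref{a25} applied to $(G,X)$, $\tilde s$ corresponds to an admissible $G$-motive $M$ over $B(\mathbb{F})$; because $\tilde s$ is special, $M$ is the pull-back of a $G$-motive in $\CM(\mathbb{Q}^{\mathrm{al}})$, and the CM abelian variety underlying the image of $\tilde s$ in $\Sh_p'(B(\mathbb{F}))$ has good reduction at $v$, which forces $M$ to take values in $\Mot(W(\mathbb{F}))$. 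Hence $M$ is special in the sense of Definition \ref{a31}, so $\tilde s$ lies in $\mathcal{M}(W(\mathbb{F}))$, and its image in $\mathcal{L}$ maps to $s$ under the extension constructed above.

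The main obstacle I expect is verifying the unique equivariant extension in the second paragraph: specifically, that the action of $Z(\mathbb{Q}_p)$ and $\Phi$ on the fibres of $\mathcal{L}\to \mathcal{L}^p$ matches the corresponding action on $\Sh_p(\mathbb{F})$ coming from the canonical integral model, so that the extension of ${}'\mathcal{L}\to {}'\Sh_p(\mathbb{F})$ to all of $\mathcal{L}$ is both forced and compatible with the partial map. Everything else is formal bookkeeping with the Siegel reduction via Proposition \ref{a40} and an application of the special-points conjecture.
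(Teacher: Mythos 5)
Your overall route is exactly the paper's: the paper's proof consists of citing Theorem \ref{a39} and Proposition \ref{a40} to get the bijection ${}'\mathcal{L}\rightarrow{}'\Sh_{p}(\mathbb{F})$, extending to an injection $\mathcal{L}\rightarrow\Sh_{p}(\mathbb{F})$ via the second part of Proposition \ref{a32}, and then invoking the special-points conjecture for surjectivity --- precisely your skeleton.

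The one place where you add substance of your own is the extension step, and there your proposed mechanism does not work as stated. You claim that since ${}'\mathcal{L}$ meets every fibre of $\mathcal{L}\rightarrow\mathcal{L}^{p}$, the $Z(\mathbb{Q}_{p})$- and $\Phi$-actions force a unique equivariant extension to all of $\mathcal{L}$. But these actions are not transitive on the fibres: the fibre over a class in $\mathcal{L}^{p}(M)$ is essentially the set of $I(M)$-orbits in the full set $X_{p}(M)$ of $p$-integral structures, which is in general much larger than a single $Z(\mathbb{Q}_{p})\times\Phi^{\mathbb{Z}}$-orbit, so equivariance alone neither determines nor forces the extension. What actually handles the $p$-component is the fibrewise identification already built into the Siegel case --- in the proof of Theorem \ref{a39} the fibres of $\mathcal{M}(M)\rightarrow\mathcal{L}$ and $\mathcal{M}(M)\rightarrow\Sh_{p}(\mathbb{F})$ are shown to coincide using Lemma \ref{a38} together with Serre--Tate deformation theory --- and this is transported to $(G,X)$ through the Siegel embedding and the injectivity hypotheses on $\mathcal{L}\rightarrow\mathcal{L}'$ and $\Sh_{p}(\mathbb{F})\rightarrow\Sh_{p}'(\mathbb{F})$. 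You correctly flagged this as your main obstacle; it is the step to make precise, not the equivariance bookkeeping. A smaller point: in the surjectivity argument the special-points conjecture lifts $s$ only up to isogeny, so your construction puts the reduction of the special lift, not $s$ itself, in the image of $\mathcal{M}(W(\mathbb{F}))$; reaching $s$ again uses the same fibrewise statement over the $p$-component.
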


\begin{proof}
Theorem \ref{a39} and Proposition \ref{a40} show that the map
$\mathcal{\mathcal{M}}(W(\mathbb{F}))\rightarrow\Sh_{p}(\mathbb{F})$ defines a
bijective map $^{\prime}\mathcal{L}{}\rightarrow{} ^{\prime}\Sh_{p}%
(\mathbb{F}{})$, which can be extended to an injection map $\mathcal{L}%
{}\rightarrow\Sh_{p}(\mathbb{F}{})$ using the second part of Proposition
\ref{a32}. If the special-points conjecture is true for $(G,X)$, this map is surjective.
\end{proof}

\begin{plain}
We deduce:

\begin{enumerate}
\item Conjecture LR+ is true for all Shimura varieties of PEL-type (by Zink's
theorem; in this case, it is known that the canonical integral model has the
property that $\Sh_{p}(\mathbb{F}{})\rightarrow\Sh_{p}^{\prime}(\mathbb{F}{})$
is injective).

\item Conjecture LRo is true for a Shimura variety of $p$-Hodge type if (and
only if) the special-points conjecture holds for the Shimura variety
(Conjecture LRo allows us to take the integral model of $\Sh_{p}$ to be the
closure of $\Sh_{p}$ in the integral model of $\Sh_{p}^{\prime}$).

\item Conjecture LR+ is true for a Shimura variety of $p$-Hodge type if the
special-points conjecture holds for it and canonical integral model has the
property that $\Sh_{p}(\mathbb{F}{})\rightarrow\Sh_{p}^{\prime}(\mathbb{F}{})$
is injective (the canonical integral model is not always known to have this
last property, because present constructions of it require a normalization;
according to Vasiu, this should not be necessary).\footnote{Note that, as the
canonical integral model is currently constructed, the map $\Sh_{p}%
(W(\mathbb{F}{}))\rightarrow\Sh_{p}^{\prime}(W(\mathbb{F}{}))$ is injective
(because it is a submap of the injective map $\Sh_{p}(B(\mathbb{F}%
{}))\rightarrow\Sh_{p}^{\prime}(B(\mathbb{F}{}))$).}
\end{enumerate}
\end{plain}

\subsection{Complements}

\subsubsection{Restatement in terms of groupoids}

Choose a $\mathbb{Q}^{\mathrm{al}}$-valued fibre functor $\bar{\omega}$ on
$\Mot(\mathbb{F})$ and isomorphisms
\begin{equation}
\mathbb{C}\otimes_{\mathbb{Q}^{\mathrm{al}}}\bar{\omega}\rightarrow
\omega_{\infty}^{\mathbb{F}},\quad\mathbb{Q}_{\ell}^{\mathrm{al}}%
\otimes_{\mathbb{Q}^{\mathrm{al}}}\bar{\omega}\rightarrow\mathbb{Q}_{\ell
}^{\mathrm{al}}\otimes_{\mathbb{Q}_{\ell}}\omega_{\ell}^{\mathbb{F}},\quad
B(\mathbb{F}{})^{\mathrm{al}}\otimes_{\mathbb{Q}^{\mathrm{al}}}\bar{\omega
}\rightarrow B(\mathbb{F}{})^{\mathrm{al}}\otimes_{B(\mathbb{F})}\omega
_{p}^{\mathbb{F}}. \label{eq3}%
\end{equation}
Then $\mathfrak{P}\overset{\textup{{\tiny def}}}{=}\underline{\Aut}%
_{\mathbb{Q}}^{\otimes}(\bar{\omega})$ is a transitive affine $\mathbb{Q}%
^{\mathrm{al}}/\mathbb{Q}$-groupoid (\cite{deligne1990}, 1.11, 1.12). Let
$\mathfrak{G}_{l}$ be the groupoid attached to the category $\mathsf{R}_{l}$
and its canonical (forgetful) fibre functor. Then%
\[
\underline{\Aut}_{\mathbb{Q}_{l}}^{\otimes}(\omega_{l}^{\mathbb{F}}%
)\simeq\mathfrak{G}_{l},\quad l=2,3,5,\ldots,\infty,
\]
and so the isomorphisms (\ref{eq3}) define homomorphisms $\check{\zeta}%
_{l}\colon\mathfrak{G}_{l}\rightarrow\mathfrak{P}(l)$ where $\mathfrak{P}(l)$
is the $\mathbb{Q}_{l}^{\mathrm{al}}/\mathbb{Q}_{l}$-groupoid obtained from
$\mathfrak{P}$ by extension of scalars. The kernel $\mathfrak{G}_{l}^{\Delta}$
of $\mathfrak{G}_{l}$ is $\mathbb{G}_{m}$ for $l=\infty$, $1$ for $l\neq
p,\infty$, and $\mathbb{G}$ for $l=p$.

\begin{proposition}
\label{a45}The system $(\mathfrak{P},(\check{\zeta}_{l})_{l=2,3,5,\ldots
,\infty})$ satisfies the following conditions

\begin{enumerate}
\item $(\mathfrak{P}^{\Delta},\check{\zeta}_{p}^{\Delta},\check{\zeta}%
_{\infty}^{\Delta})=(P,z_{p},z_{\infty});$

\item the morphisms $\check{\zeta}_{\ell}$ for $\ell\neq p,\infty$ are defined
by a section of $\mathfrak{P}$ over $\overline{\mathbb{A}_{f}^{p}}%
\otimes_{\mathbb{Q}}\overline{\mathbb{A}_{f}^{p}}$ where $\overline
{\mathbb{A}_{f}^{p}}$ is the image of $\mathbb{Q}^{\mathrm{al}}\otimes
\mathbb{A}_{f}^{p}$ in $\prod\nolimits_{\ell\neq p,\infty}\mathbb{Q}_{\ell
}^{\mathrm{al}}$.
\end{enumerate}
\end{proposition}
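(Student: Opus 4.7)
The plan is to read both claims off the tannakian dictionary applied to $(\Mot(\mathbb{F}),\bar{\omega})$, combined with the commutative triangle $\zeta_{l}\circ R=\xi_{l}$ on $\CM(\mathbb{Q}^{\mathrm{al}})$. For part (a), the identification $\mathfrak{P}^{\Delta}=P$ is immediate: by the construction in \S 2.4 the fundamental group of $\Mot(\mathbb{F})$ is the Weil-number protorus $P$, which is commutative and so (by the convention fixed in \S 1) descends canonically to an affine $\mathbb{Q}$-group scheme; and the diagonal of the $\mathbb{Q}^{\mathrm{al}}/\mathbb{Q}$-groupoid $\underline{\Aut}_{\mathbb{Q}}^{\otimes}(\bar{\omega})$ is precisely this fundamental group.

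To compute $\check{\zeta}_{l}^{\Delta}$ for $l\in\{p,\infty\}$, I would use the functoriality of the fundamental group. The composite $\xi_{l}=\zeta_{l}\circ R$ induces on diagonals the homomorphism $\mathfrak{G}_{l}^{\Delta}\xrightarrow{\check{\zeta}_{l}^{\Delta}}P\xrightarrow{r^{\mathrm{ST}}}S$, and by the very definition of $\xi_{\infty}$ and $\xi_{p}$ in \S 2.2 these composites equal $x_{\infty}$ and $x_{p}$ respectively. Since $r^{\mathrm{ST}}$ is injective and was characterized in \S 2.3 as the unique homomorphism $P\hookrightarrow S$ with $r^{\mathrm{ST}}\circ z_{l}=x_{l}$ for $l=p,\infty$, one concludes $\check{\zeta}_{p}^{\Delta}=z_{p}$ and $\check{\zeta}_{\infty}^{\Delta}=z_{\infty}$. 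This step is essentially bookkeeping once the functoriality of $\pi$ is granted.

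For part (b), the approach is to show that the individual isomorphisms $\mathbb{Q}_{\ell}^{\mathrm{al}}\otimes_{\mathbb{Q}^{\mathrm{al}}}\bar{\omega}\to\mathbb{Q}_{\ell}^{\mathrm{al}}\otimes_{\mathbb{Q}_{\ell}}\omega_{\ell}^{\mathbb{F}}$ in (\ref{eq3}) should not be chosen independently at each $\ell$ but rather as the pullbacks of a single isomorphism of tensor functors $\overline{\mathbb{A}_{f}^{p}}\otimes_{\mathbb{Q}^{\mathrm{al}}}\bar{\omega}\to\overline{\mathbb{A}_{f}^{p}}\otimes_{\mathbb{A}_{f}^{p}}\omega_{f}^{p}$, where $\omega_{f}^{p}$ is the $\mathbb{A}_{f}^{p}$-valued fibre functor on $\Mot(\mathbb{F})$ assembled from the $\omega_{\ell}^{\mathbb{F}}$. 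Such a global comparison is exactly a section of $\mathfrak{P}$ over $\Spec(\overline{\mathbb{A}_{f}^{p}}\otimes_{\mathbb{Q}}\overline{\mathbb{A}_{f}^{p}})$, and pulling back along $\overline{\mathbb{A}_{f}^{p}}\twoheadrightarrow\mathbb{Q}_{\ell}^{\mathrm{al}}$ on both factors recovers $\check{\zeta}_{\ell}$. Existence reduces to triviality of the resulting $P_{\overline{\mathbb{A}_{f}^{p}}}$-torsor; this follows from the fact that the torsor already admits a $\mathbb{Q}^{\mathrm{al}}$-point (start from the Betti-to-étale comparison, or equivalently from a fixed isomorphism between $\mathbb{A}_{f}^{p}\otimes\omega_{0}$ and $\omega_{f}^{p}$ produced by Theorem~\ref{a9}), and any such point extends to a section over $\overline{\mathbb{A}_{f}^{p}}$ because that ring is precisely the image of $\mathbb{Q}^{\mathrm{al}}\otimes_{\mathbb{Q}}\mathbb{A}_{f}^{p}$ in the product.

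The main obstacle is clearly (b): orchestrating the choices in (\ref{eq3}) so that they arise from a single $\overline{\mathbb{A}_{f}^{p}}$-trivialization rather than being made independently $\ell$ by $\ell$. The right way to handle this is to do it in the opposite order from how (\ref{eq3}) is phrased---first fix a $\mathbb{Q}$-valued (or $\overline{\mathbb{A}_{f}^{p}}$-valued) comparison between $\bar{\omega}$ and $\omega_{f}^{p}$ obtained from Theorem~\ref{a9}, and only then extract the individual $\ell$-adic comparisons from it. With that choice built in, assertion (b) is forced; without it, one has to argue that any system of independently chosen $\ell$-adic trivializations can be replaced by a coherent one, which requires a separate $H^{1}$-vanishing or approximation argument.
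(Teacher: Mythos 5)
The paper itself offers no argument here beyond ``Straightforward (cf.\ Milne 2003, \S 6)'', so the only question is whether your argument stands on its own. Part (a) does: identifying $\mathfrak{P}^{\Delta}$ with the fundamental group $P$ of $\Mot(\mathbb{F})$ (using the commutativity convention of \S 1), and then pinning down $\check{\zeta}_{p}^{\Delta},\check{\zeta}_{\infty}^{\Delta}$ by pushing the functoriality of $\pi$ through $\xi_{l}=\zeta_{l}\circ R$ and invoking the injectivity and defining property of $r^{\mathrm{ST}}$ ($r^{\mathrm{ST}}\circ z_{l}=x_{l}$ for $l=p,\infty$), is exactly the intended bookkeeping, and you carry it out correctly. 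You are also right to flag that (b) is really a statement about how the choices in \eqref{eq3} at $\ell\neq p,\infty$ are made: independent $\ell$-by-$\ell$ choices differ from a coherent system by an arbitrary family $(g_{\ell})\in\prod_{\ell}P(\mathbb{Q}_{\ell}^{\mathrm{al}})$, which need not lie in $P(\overline{\mathbb{A}_{f}^{p}})$, so the comparisons must be taken as the components of a single $\overline{\mathbb{A}_{f}^{p}}$-isomorphism between $\bar{\omega}$ and $\omega_{f}^{p}$.

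The gap is in your existence argument for that coherent isomorphism. Saying the $P_{\overline{\mathbb{A}_{f}^{p}}}$-torsor $\underline{\Isom}^{\otimes}(\overline{\mathbb{A}_{f}^{p}}\otimes_{\mathbb{Q}^{\mathrm{al}}}\bar{\omega},\;\overline{\mathbb{A}_{f}^{p}}\otimes_{\mathbb{A}_{f}^{p}}\omega_{f}^{p})$ ``admits a $\mathbb{Q}^{\mathrm{al}}$-point'' does not typecheck (a torsor over $\overline{\mathbb{A}_{f}^{p}}$ has points only in $\overline{\mathbb{A}_{f}^{p}}$-algebras), and the appeal to Theorem \ref{a9} is circular for precisely the issue at hand: that theorem produces isomorphisms $\omega_{0}(l)\approx\omega_{l}|$ one $l$ at a time, with no adelic coherence, and moreover $\omega_{0}$ lives on $\CM(\mathbb{Q}^{\mathrm{al}})^{P}$ while $\bar{\omega}$ lives on $\Mot(\mathbb{F})$, so an extra comparison is needed even to bring it into play. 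The actual source of coherence is the canonical \'etale--Betti comparison with $\mathbb{A}_{f}^{p}$-coefficients on $\CM(\mathbb{Q}^{\mathrm{al}})$ (the full prime-to-$p$ Tate module), which gives a canonical isomorphism $\omega_{f}^{p}\circ R\simeq\mathbb{A}_{f}^{p}\otimes_{\mathbb{Q}}\omega_{B}$; one then compares $\bar{\omega}\circ R$ with $\omega_{B}\otimes\mathbb{Q}^{\mathrm{al}}$ over $\mathbb{Q}^{\mathrm{al}}$ and must still argue that the resulting isomorphism over $\overline{\mathbb{A}_{f}^{p}}$ of the pullbacks along $R$ descends to one of the fibre functors on the quotient category $\Mot(\mathbb{F})$ --- the pullback torsor is under $S$, not $P$, so this is not automatic and needs the kind of cohomological or explicit construction carried out in Milne 2003, \S 6. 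Finally, ``any such point extends to a section over $\overline{\mathbb{A}_{f}^{p}}$ because that ring is the image of $\mathbb{Q}^{\mathrm{al}}\otimes\mathbb{A}_{f}^{p}$'' is an assertion, not an argument; nothing about being such an image by itself forces a trivialization to spread out. So: right strategy (build \eqref{eq3} at the finite primes away from $p$ from one adelic comparison and read off the section), but the existence step as written would not survive scrutiny.
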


\begin{proof}
Straightforward (cf. \cite{milne2003}, \S 6).
\end{proof}

Thus $(\mathfrak{P},(\check{\zeta}_{l}))$ is a pseudomotivic groupoid in the
sense of \cite{milne1992}, 3.27 (corrected in \cite{reimann1997}, p120), which
is essentially the same as the object that \citet{langlandsR1987} call a
\textquotedblleft pseudomotivische Galoisgruppe\textquotedblright\ . On
restating Conjecture \ref{a15} in terms of $(\mathfrak{P},(\check{\zeta}%
_{l}))$, we recover Conjecture 5.e, p169, of \cite{langlandsR1987} in the good
reduction case; see also \cite{milne1992}, 4.4, and \cite{reimann1997},
Appendix B3.

Langlands and Rapoport also state their conjecture for Shimura varieties whose
weight is not rational (that is, which fail SV4) in terms of a
\textquotedblleft quasimotivische Galoisgruppe\textquotedblright, but, as Pfau
and Reimann have pointed out, their definition of this is incorrect. Following
Pfau, we define a \emph{quasimotivic groupoid }by\footnote{Those who don't
wish to assume that their Shimura varieties satisfy SV6, will need to replace
$(\mathbb{G}_{m})_{\mathbb{Q}^{\mathrm{cm}}/\mathbb{Q}}$ with $(\mathbb{G}%
_{m})_{\mathbb{Q}^{\mathrm{al}}/\mathbb{Q}}$.}%
\[
\mathfrak{Q}=\mathfrak{P}\times_{S}(\mathbb{G}_{m})_{\mathbb{Q}^{\mathrm{cm}%
}/\mathbb{Q}}.
\]
Then it is possible Conjecture LR+ for all Shimura varieties in terms of
$\mathfrak{Q}$.

\subsubsection{Non simply connected derived groups}

Langlands and Rapoport originally stated their conjecture only for pairs
$(G,X)$ with $G^{\mathrm{der}}$ simply connected (in fact, their statement
becomes false without this condition). In \cite{milne1992}, the conjecture is
restated so that it applies to all Shimura varieties, and the condition that
the bijection be equivariant for $Z(\mathbb{Q}_{p})$ is added. For the
improved conjecture, the following statement is proved (ibid. 4.19):

\begin{quote}
Let $(G,X)\rightarrow(G^{\prime},X^{\prime})$ be a morphism of Shimura
$p$-data with $G_{\mathbb{Q}}\rightarrow G_{\mathbb{Q}}^{\prime}$ an isogeny;
if the improved Langlands-Rapoport conjecture is true for $(G,X)$, then it is
true for $(G^{\prime},X^{\prime})$.
\end{quote}

\noindent Similar arguments prove this for the Conjecture LR+. Thus, once one
knows Conjecture LR+ to be true for some Shimura $p$-data, one obtains it for
many more.

\subsubsection{Changing the centre of $G$}

\citet{pfau1993, pfau1996a, pfau1996b} stated a \textquotedblleft
refined\textquotedblright\ Langlands-Rapoport conjecture, and he proved the
following statement:

\begin{quote}
Let $(G,X)$ and $(G^{\prime},X^{\prime})$ be Shimura $p$-data whose associated
\emph{connected} Shimura $p$-data are isomorphic; if the refined
Langlands-Rapoport conjecture is true for one of the Shimura varieties, then
it is true for both.
\end{quote}

\noindent Again, similar arguments prove this statement Conjecture LR+. In
fact, the arguments become somewhat simpler. Thus Theorem \ref{a44} implies
Conjecture LR+ for many Shimura varieties whose weight is not rational.

\subsubsection{Shimura varieties of abelian type}

The above statements almost suffice to prove Conjecture LR+ for all Shimura
varieties of abelian type (assuming the special-points conjecture). The main
obstacle is that, in Theorem \ref{a44} we required that the Shimura $p$-datum
be of Hodge type, whereas we need to know the conjecture for all $(G,X)$ such
that $(G_{\mathbb{Q}},X)$ is of Hodge type. In other words, in Theorem
\ref{a44}, we required that there exist an embedding $(G_{\mathbb{Q}%
},X)\hookrightarrow(G(\psi),X(\psi))$ such that $G(\mathbb{Z}_{p})$ maps into
a hyperspecial subgroup of $G(\psi)$; we need to prove the theorem without the
last condition (or prove that it always holds).

\subsubsection{General Shimura varieties}

It is natural to pose the following problem:

\begin{quote}
Let $(G,X)$ be a Shimura $p$-datum (whose weight is defined over $\mathbb{Q}$,
if you wish). Define (in a natural way) an $\mathbb{F}$-scheme $L$ with a
continuous action of $G(\mathbb{A}_{f}^{p})\times Z(\mathbb{Q}_{p})$ such that
$L(\mathbb{F})=\mathcal{L}(G,X)$.
\end{quote}

\noindent Once this problem has been solved, it becomes possible to state the
following conjecture:

\begin{quote}
Show that there exists an equivariant isomorphism of $\mathbb{F}$-schemes
$L\rightarrow\Sh_{p}$.
\end{quote}

\noindent Once the problem has been solved for all Shimura varieties and the
conjecture has been proved for (certain) Shimura varieties of type $A_{n}$, it
should be possible to deduce the conjecture for all Shimura varieties by the
methods of \cite{milne1983} and \cite{borovoi1984, borovoi1987}.

\subsection{The rationality conjecture}

We refer to \cite{milne2009}, 4.1, for the statement of the rationality conjecture.

If the rationality conjecture is true for all CM abelian varieties, then there
is a unique good theory of rational Tate classes $A\mapsto\mathcal{R}(A)$ on
abelian varieties over $\mathbb{F}$, and we can define $\Mot(\mathbb{F})$ to
be the category whose objects are triples $(A,e,m)$ with $A$ an abelian
variety \emph{over }$\mathbb{F}$ and $e$ an idempotent in $\mathcal{R}^{\dim
A}(A\times A)$. The reduction functor realizes $\Mot(\mathbb{F}{})$ as a
quotient of the tannakian category $\CM(\mathbb{Q}^{\text{al}})$, and so it
defines a fibre functor $\omega_{0}$ on $\CM(\mathbb{Q}^{\text{al}})^{P}$. The
quotient category defined by $\omega_{0}$ is, of course, just $\Mot(\mathbb{F}%
)$. The advantage now is that we know directly that $\Mot(\mathbb{F})$
contains the motives of abelian varieties over $\mathbb{F}{}$.

\begin{theorem}
\label{a46}If the special-points conjecture is true, then the rationality
conjecture for CM abelian varieties implies the rationality conjecture for all
abelian varieties whose Mumford-Tate group is unramified at $p$. Conversely,
if the rationality conjecture is true for all abelian varieties, then the
special-points conjecture is true for Shimura varieties of Hodge type with
simply connected derived group (and hence for all Shimura varieties of abelian
type A, B, or C).
\end{theorem}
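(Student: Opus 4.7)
The plan is to prove the two implications by complementary strategies, each leveraging the machinery built in Sections 4--6.

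For the forward direction, let $A$ be an abelian variety over $\mathbb{Q}^{\mathrm{al}}$ with good reduction at $v$ whose Mumford--Tate group $H$ is unramified at $p$, so that $H$ extends to a reductive group over $\mathbb{Z}_{(p)}$ with a hyperspecial subgroup at $p$ and gives a Shimura $p$-datum $(H, X_H)$ of Hodge type. The reduction of $A$ at $v$ defines a point of $\Sh_p(H, X_H)(\mathbb{F})$, and by the special-points conjecture applied to $(H, X_H)$ there exists, up to isogeny, a CM lift $\tilde A$ over $\mathbb{Q}^{\mathrm{al}}$ whose reduction is isogenous to that of $A$. One then applies the rationality conjecture for the CM abelian variety $\tilde A$ and transports the resulting rational Tate classes along the isogeny (rational Tate classes being functorial in isogenies), to obtain rational Tate classes on the reduction of $A$. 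Combined with the crystalline comparison at the level of Hodge classes, this yields the rationality conjecture for $A$.

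For the converse, assume the rationality conjecture for every abelian variety over $\mathbb{Q}^{\mathrm{al}}$ with good reduction at $v$. As recorded in the footnote to Proposition \ref{a32}, this is precisely what is needed to construct a reduction functor $R'$ from the tannakian category of motives generated by such abelian varieties to $\Mot(\mathbb{F})$. The goal is then to answer Question \ref{a33} affirmatively: every admissible $G$-motive $M$ over $B(\mathbb{F})$ taking values in $\Mot(W(\mathbb{F}))$ is special. Using $R'$, the reduction $\bar M$ of $M$ is a $G$-motive over $\mathbb{F}$, and since $G^{\mathrm{der}}$ is simply connected I would verify the criterion of Proposition \ref{a19} for $\bar M$: conditions (a) and (b) by compatibility of the Betti and $\ell$-adic realizations under specialization; condition (c) by the $p$-integral structure built into $M$ together with Theorem \ref{a30a} and Lemmas \ref{a27}, \ref{a29}; and condition (d) from the datum $N(G^{\mathrm{ab}}, \mu_X)$ attached to the reflex field. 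Once $\bar M$ is shown to be special, its factorization through $\CM(\mathbb{Q}^{\mathrm{al}})$ supplies a CM lift of the starting $\mathbb{F}$-point, establishing the special-points conjecture for $(G, X)$ of Hodge type with simply connected $G^{\mathrm{der}}$. The passage to abelian type A, B, or C then follows from the transfer results already assembled in the Complements subsection (Pfau's descent through connected Shimura $p$-data and the change-of-centre arguments).

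The main obstacle I anticipate is the verification of Proposition \ref{a19}(c) and (d) for $\bar M$: one must confirm that the integral structures obtained by applying $R'$ to $M$ reproduce both the canonical cocharacter $\mu_0$ at $p$ and the Shimura--Taniyama datum attached to $G^{\mathrm{ab}}$, which is what the criterion ultimately asks for. A secondary but essential issue is that $R'$ must respect the $G$-structure tensor-functorially, so that all $G$-invariant Hodge tensors on any abelian lift reduce to $G$-invariant rational Tate classes on the reduction; this is the same integral-comparison input used throughout Section 5, but now it must be checked functorially in $\xi \in \Rep(G)$ rather than one representation at a time. With those ingredients in place, the forward direction becomes a direct application of the special-points conjecture to the Mumford--Tate Shimura $p$-datum, and the converse is a diagram chase through the motivic description afforded by (\ref{e3}).
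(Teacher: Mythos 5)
Your proposal follows essentially the same route as the paper's own proof: the forward direction uses the special-points conjecture for the Mumford--Tate Shimura $p$-datum to produce a CM lift up to isogeny and then transfers rationality of the relevant intersection products along that isogeny, while the converse uses the rationality conjecture to define the reduction functor on $\Mot(W(\mathbb{F}))$, realizes a point of $\Sh_p(\mathbb{F})$ as an admissible $G$-motive over $B(\mathbb{F})$ via the diagram (\ref{e3}), and invokes the criterion of Proposition \ref{a19} (with $G^{\mathrm{der}}$ simply connected) to conclude the reduced $G$-motive is special and hence that the point lifts to a special point. The only cosmetic differences are that the paper phrases the forward step directly as rationality of $\langle\gamma_0\cdot\delta\rangle$ via $\langle\gamma_0'\cdot\alpha^{\ast}\delta\rangle$ rather than through rational Tate classes, and it does not frame the converse as answering Question \ref{a33}.
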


\begin{proof}
Let $A$ be an abelian variety over $\mathbb{Q}^{\mathrm{al}}$ with good
reduction at $v$ to an abelian variety $A_{0}$ over $\mathbb{F}$, and let
$\gamma$ be a Hodge class on $A$ and $\delta$ a Lefschetz class on $A_{0}$. If
the special points conjecture is true, then there exists a CM abelian variety
$A^{\prime}$ over $\mathbb{Q}^{\mathrm{al}}$, a Hodge class $\gamma^{\prime}$
on $A^{\prime}$, and an isogeny $\alpha\colon A_{0}^{\prime}\rightarrow A_{0}$
sending $\gamma_{0}^{\prime}$ to $\gamma_{0}$. Then%
\[
\langle\gamma_{0}\cdot\delta\rangle\in\langle\gamma_{0}^{\prime}\cdot
\alpha^{\ast}\delta\rangle\mathbb{Q}{}.
\]
If the rationality conjecture is true for $A^{\prime}$, then $\langle
\gamma_{0}^{\prime}\cdot\alpha^{\ast}\delta\rangle\in\mathbb{Q}{}$, and so
$\langle\gamma_{0}\cdot\delta\rangle\in\mathbb{Q}{}$.

Conversely, if the rationality conjecture is true for all abelian varieties,
then the reduction functor is defined on the tannakian subcategory
$\Mot(W(\mathbb{F}{}))$ of $\Mot(B(\mathbb{F}{}))$ generated by abelian
varieties over $B(\mathbb{F}{})$ with good reduction. A point $P$ of
$\Sh_{p}(\mathbb{F}{})$ arises from an element $[M,\eta,\underline{\Lambda
}_{p}]$ of $\mathcal{M}{}(B(\mathbb{F}{}))$ (see (\ref{e3}), p\pageref{e3}).
If $(G_{\mathbb{Q}{}},X)$ is of Hodge type, then $M$ takes values in
$\Mot(W(\mathbb{F}{}))$, and the composite%
\[
\Rep_{\mathbb{Q}{}}(G)\rightarrow\Mot(W(\mathbb{F}{}))\rightarrow
\Mot(\mathbb{F}{})
\]
satisfies the conditions (a,b,c,d) of Proposition \ref{a19}. Therefore, if
$G^{\mathrm{der}}$ is simply connected, then $M$ is special, which implies
that $P$ lifts to a special point.
\end{proof}

If we knew the rationality conjecture was true, this would open up the
possibility of extending the motivic moduli description of the points on a
Shimura variety of abelian type and rational weight from characteristic zero
to characteristic $p$.

So long as the Tate conjecture remains inaccessible, the rationality
conjecture is the most important problem in the theory of abelian varieties
over finite fields.

\bibliographystyle{cbe}
\bibliography{D:/Current/LRxrefs}

\bigskip\noindent\begin{minipage}[t]{2in}
Mathematics Department,\\
University of Michigan,\\
Ann Arbor, MI\ 48109, USA.
\end{minipage}\hfill\begin{minipage}[t]{1.7in}
jmilne at umich dot edu\\
\url{www.jmilne.org/math/}
\end{minipage}

\end{document}